\numberwithin{equation}{section}
\def\RIMS{TF}
\newtheorem{thm}{Theorem}[section]
\newtheorem{crl}[thm]{Corollary}
\newtheorem{lmm}[thm]{Lemma}
\newtheorem{prp}[thm]{Proposition}
\theoremstyle{definition}
\newtheorem{dfn}[thm]{Definition}
\newtheorem{exa}[thm]{Example}
\theoremstyle{remark}
\newtheorem{rem}{Remark}
\DeclareMathOperator{\ord}{ord}
\DeclareMathOperator{\IM}{Im}
\DeclareMathOperator{\ad}{ad}
\DeclareMathOperator{\trace}{trace}
\DeclareMathOperator{\rank}{rank}
\DeclareMathOperator{\Pidx}{Pidx}
\DeclareMathOperator{\idx}{idx}
\DeclareMathOperator{\End}{End}
\DeclareMathOperator{\supp}{supp}
\DeclareMathOperator{\spt}{spt}
\def\p{\partial}
\title{Classification of Fuchsian systems and their connection problem}%
\author{\textsc{Toshio Oshima}%
\footnote{Graduate School of Mathematical Sciences, %
University of Tokyo, Tokyo 153-8914, Japan%
\newline e-mail: \texttt{oshima@ms.u-tokyo.ac.jp}}}
\keywords{\textit{Fuchsian systems, middle convolution, %
connection problem}}         
\title{Classification of Fuchsian systems and their connection problem}%
\author{Toshio Oshima}
\address{Graduate School of Mathematical Sciences,
University of Tokyo, 7-3-1, Komaba, Meguro-ku, Tokyo 153-8914, Japan}
\email{oshima@ms.u-tokyo.ac.jp}
\keywords{\textit{Fuchsian systems, middle convolution}}%
\thanks{{\sl 2000 Mathematics Subject Classification.} 
Primary 34M35; Secondary  34M40, 34M15 \\
\hspace*{12pt}Supported by Grant-in-Aid for Scientific Researches (A), 
No.\ 20244008, Japan Society of Promotion of Science\\
}
\begin{document}
%

\maketitle

\section{Introduction}
Middle convolutions introduced by Katz \cite{Kz} and extensions and 
restrictions introduced by Yokoyama \cite{Yo} give interesting operations
on Fuchsian systems on the Riemann sphere.
They are invertible and under them the solutions of the systems are 
transformed by integral transformations and the correspondence of 
their monodromy groups is concretely described (cf.~\cite{Ko4}, \cite{Ha}, 
\cite{HY}, \cite{DR2}, \cite{HF}, \cite{O2} etc.).

In this note we review the Deligne-Simpson problem, 
a combinatorial structure of middle convolutions and their relation 
to a Kac-Moody root system discovered by Crawley-Boevey \cite{CB}.  
We show with examples that middle convolutions transform the Fuchsian 
systems with a fixed number of accessory parameters into fundamental 
systems whose spectral type is in a finite set.
In \S\ref{sec:C} we give an explicit connection formula for
solutions of Fuchsian differential equations without moduli.

The author wold like to express his sincere gratitude to
Y.\ Haraoka, A.\ Kato, H.\ Ochiai, K.\ Okamoto, 
H.\ Sakai, K.\ Takemura and T.\ Yokoyama.
The discussions with them enabled the author to write this note.

\section{Tuples of partitions}
Let $\mathbf m
=\bigl(m_{j,\nu}\bigr)_{\substack{j=0,1,\ldots\\ \nu=1,2,\ldots}}$ 
be an ordered set of infinite number of non-negative integers 
indexed by non-negative integers $j$ and positive integers $\nu$.
Then $\mathbf m$ is called a \textsl{$(k+1)$-tuple of partitions of $n$} 
if the following two conditions are satisfied.
\begin{align}
  \sum_{\nu=1}^\infty m_{j,\nu}&=n\qquad(j=0,1,\ldots),
  \allowdisplaybreaks\\
  m_{j,1} &= n\qquad(j=k+1,k+2,\ldots).
\end{align}
The totality of $(k+1)$-tuples of partitions of $n$ are denoted by
${\mathcal P}_{k+1}^{(n)}$ and we put
\begin{align}
  {{\mathcal P}}_{k+1} &:=
    \bigcup_{n=0}^\infty {{\mathcal P}}_{k+1}^{(n)},\quad
  {{\mathcal P}}^{(n)} :=
    \bigcup_{k=0}^\infty {{\mathcal P}}_{k+1}^{(n)},\quad
  {{\mathcal P}}       :=
    \bigcup_{k=0}^\infty {{\mathcal P}}_{k+1},\allowdisplaybreaks\\
 \ord\mathbf m &:= n\quad\text{if \ }
 \mathbf m\in{{\mathcal P}}^{(n)},\\
 \mathbf 1&:=
  \bigl(m_{j,\nu}=\delta_{\nu,1}\bigr)_{\substack{j=0,1,\ldots\\\nu=1,2,\ldots}}\in\mathcal P^{(1)},\\
 \idx(\mathbf m,\mathbf m')&:=
 \sum_{j=0}^k\sum_{\nu=1}^\infty m_{j,\nu}m'_{j,\nu}
 -(k-1)\ord\mathbf m\cdot\ord\mathbf m'
 \quad(\mathbf m,\ \mathbf m'\in\mathcal P_{k+1}).
\end{align}
Here $\ord\mathbf m$ is called the order of $\mathbf m$.
For $\mathbf m,\,\mathbf m'\in\mathcal P$ and a non-negative integer $p$,
the tuples $p\mathbf m$ and $\mathbf m+\mathbf m'\in\mathcal P$ are 
naturally defined.
For $\mathbf m\in{{\mathcal P}}_{k+1}^{(n)}$ we choose
integers $n_0,\dots,n_k$ so that $m_{j,\nu}=0$
for $\nu>n_j$ and $j=0,\dots,k$ and we will express
$\mathbf m$ by
\begin{align*}
 \mathbf m&=(\mathbf m_0,\mathbf m_1,\dots,\mathbf m_k)\\
          &=m_{0,1},\dots,m_{0,n_0};\ldots;m_{k,1},\dots,m_{k,n_k}\\
          &=m_{0,1}\cdots m_{0,n_0},m_{1,1}\cdots m_{1,n_1},\dots,
           m_{k,1}\cdots m_{k,n_k}
\end{align*}
if there is no confusion.
Similarly $\mathbf m=(m_{0,1},\dots,m_{0,n_0})$ 
if $\mathbf m\in\mathcal P_1$. Here
\begin{equation*}
  \mathbf m_j = (m_{j,1},\dots,m_{j,n_j}) \text{ \ and \ }
  \ord\mathbf m=m_{j,1}+\cdots+m_{j,n_j}\quad(0\le j\le k).
\end{equation*}
For example $\mathbf m=(m_{j,\nu})\in{{\mathcal P}}_{3}^{(4)}$
with $m_{1,1}=3$ and
$m_{0,\nu}=m_{2,\nu}=m_{1,2}=1$ for $\nu=1,\dots,4$
will be expressed by
\begin{equation}
 \mathbf m=1,1,1,1;3,1;1,1,1,1=1111,31,1111=1^4,31,1^4.
\end{equation}
\begin{dfn}
A tuple of partition $\mathbf m\in\mathcal P$ is called \textsl{monotone} if
\begin{equation}
  m_{j,\nu} \ge m_{j,\nu+1}\quad(j=0,1,\ldots,\ \nu=1,2,\ldots)
\end{equation}
and 
$\mathbf m$ is called \textsl{indivisible} if 
the greatest common divisor of $\{m_{j,\nu}\}$ equals 1.
\end{dfn}
Let $\mathfrak S_\infty$ be the restricted permutation group of
the set of indices $\{0,1,2,3,\ldots\}=\mathbb Z_{\ge 0}$, 
which is generated by the transpositions $(j,j+1)$ with $j\in\mathbb Z_{\ge0}$.
Put $\mathfrak S_\infty':=\{\sigma\in\mathfrak S_\infty\,;\,\sigma(0)=0\}$,
which is isomorphic to $\mathfrak S_\infty$.
\begin{dfn}\label{dfn:Sinfty}
Transformation groups  $S_\infty$ and $S_\infty'$ of
$\mathcal P$ are defined by
\begin{equation}\label{eq:S_infty}
 \begin{split}
 S_\infty &:=H\ltimes S_\infty',\quad
 S_\infty' :=\prod_{j=0}^\infty G_j,\quad
 G_j\simeq\mathfrak S_\infty',\quad H\simeq\mathfrak S_\infty,\\
 m'_{j,\nu} &= m_{\sigma(j),\sigma_j(\nu)}\qquad(j=0,1,\ldots,\ \nu=1,2,\ldots)
 \end{split}
\end{equation}
for $g = (\sigma,\sigma_1,\ldots) \in S_\infty$, 
$\mathbf m=(m_{j,\nu})\in \mathcal P$ and $\mathbf m'=g\mathbf m$.
\end{dfn}
\section{Conjugacy classes of matrices}\label{S:M}
For $\mathbf m=(m_1,\dots,m_N)\in\mathcal P^{(n)}_1$ 
and $\lambda=(\lambda_1,\dots,\lambda_N)\in\mathbb C^N$
we define a matrix $L(\mathbf m;\lambda)\in M(n,\mathbb C)$ 
as follows, which is introduced and effectively used by \cite{Os}:

If $\mathbf m$ is monotone, then
\begin{equation}\begin{split}
 L(\mathbf m;\mathbf \lambda) 
  &:= \Bigl(A_{ij}\Bigr)_{\substack{1\le i\le N\\1\le j\le N}},\quad
 A_{ij}\in M(m_i,m_j,\mathbb C),\\
 A_{ij} &= \begin{cases}
          \lambda_i I_{m_i}&(i=j)\\
          I_{m_i,m_j}:=
          \Bigl(\delta_{\mu\nu}\Bigr)
          _{\substack{1\le \mu\le m_i\\1\le \nu\le m_j}}
	=
          \left(\begin{smallmatrix}
          I_{m_j} \\ 0
          \end{smallmatrix}\right)&(i=j-1)\\
          0            &(i\ne j,\ j-1)
          \end{cases}.
\end{split}
\end{equation}
Here $I_{m_i}$ denote the identity matrix of size $m_i$ and
$M(m_i,m_j,\mathbb C)$ means the set of matrices of size $m_i\times m_j$
with components in $\mathbb C$ and
$M(m,\mathbb C):=M(m,m,\mathbb C)$.

For example
\begin{equation}
 L(2,1,1;\lambda_1,\lambda_2,\lambda_3)=
 \begin{pmatrix}
 \lambda_1 & 0       & 1& 0\\
 0         &\lambda_1& 0& 0\\
 0         & 0       &\lambda_2&1\\
 0         & 0       & 0       &\lambda_3\\
 \end{pmatrix}.
\end{equation}

If $\mathbf m$ is not monotone, fix a permutation 
$\sigma$ of $\{1,\dots,N\}$ so that 
$(m_{\sigma(1)},\ldots,m_{\sigma(N)})$ is monotone and put 
$L(\mathbf m;\mathbf \lambda)=L(m_{\sigma(1)},\ldots,
m_{\sigma(N)};\lambda_{\sigma(1)},\ldots,\lambda_{\sigma(N)})$.

When $\lambda_1=\cdots=\lambda_N=\mu$, $L(\mathbf m;\lambda)$ will be
simply denoted by $L(\mathbf m,\mu)$.

We denote $A\sim B$ for $A$, $B\in M(n,\mathbb C)$ if and only if
there exists $g\in GL(n,\mathbb C)$ with $B=gAg^{-1}$.
If $A\sim L(\mathbf m;\lambda)$, 
$\mathbf m$ is called the \textsl{spectral type} of $A$
and denoted by $\spt A$. 

\begin{rem}
{\rm i)\ }
If $\mathbf m=(m_1,\dots,m_N)\in\mathcal P_1^{(n)}$ is monotone, we have 
\begin{equation}
 A\sim L(\mathbf m;\lambda)\ \Leftrightarrow\ 
  \rank\prod_{\nu=1}^k(A-\lambda_\nu)
 = n - (m_1+\cdots+m_k)\quad(k=0,1,\dots,N).
\end{equation}

{\rm ii)\ } For $\mu\in\mathbb C$ put 
\begin{equation}\label{eq:Msub}
 (\mathbf m;\lambda)_\mu
    =(m_{i_1},\ldots,m_{i_K},\mu)
\text{ \ with \ }\{i_1,\dots,i_K\}=\{i\,;\,\lambda_i=\mu\}.
\end{equation}
Then we have
\begin{equation}\label{eq:Leigen}
  L(\mathbf m;\lambda) \sim\bigoplus_{\mu\in\mathbb C}
  L\bigl((\mathbf m;\lambda)_\mu\bigr).
\end{equation}

{\rm iii)\ } Suppose $\mathbf m$ is monotone.  
Then for $\mu\in\mathbb C$ 
\begin{equation}\label{eq:LJordan}
 \begin{aligned}
  L(\mathbf m,\mu) &\sim
  \bigoplus_{j=1}^{m_1} J\bigl(\max\{\nu\,;\,m_\nu\ge j\},\mu\bigr),\\
  J(k,\mu) &:=L(1^k,\mu)\in M(k,\mathbb C).&\text{(Jordan cell)}
 \end{aligned}
\end{equation}

{\rm iv)\ }  For $A\in M(n,\mathbb C)$ we put $Z_{M(n,\mathbb C)}(A)
:=\{X\in M(n,\mathbb C)\,;\,AX=XA\}$.  Then 
\begin{equation}\label{eq:cent}
  \dim Z_{M(n,\mathbb C)}\bigl(L(\mathbf m;\lambda)\bigr)
  = m_1^2+m_2^2+\cdots.
\end{equation} 
\end{rem}
Note that the Jordan canonical form of $L(\mathbf m;\lambda)$ is 
easily obtained by \eqref{eq:Leigen} and \eqref{eq:LJordan}.
For example $L(2,1,1,\mu)\sim J(3,\mu)\oplus J(1,\mu)$.
\begin{lmm}\label{lmm:conj}
Let $A(t)$ be a continuous map of $[0,1)$ to $M(n,\mathbb C)$.
Suppose there exists a partition $\mathbf m=(m_1,\dots,m_N)$ of 
$n$ and continuous function $\lambda(t)
$ of $(0,1)$ to 
$\mathbb C^N$  so that
$ 
  A(t)\sim L\bigl(\mathbf m;\lambda(t)\bigr)
$ 
for any $t\in (0,1)$.
If $\dim Z_{M(n,\mathbb C)}\bigl(A(t)\bigr)$ is constant for $t\in[0,1)$, then
$A(0)\sim L\bigl(\mathbf m;\lim_{t\to0}\lambda(t)\bigr)$.
\end{lmm}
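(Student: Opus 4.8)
The plan is to combine the semicontinuity of rank with the hypothesis on centralizer dimensions and with the rank characterization in part~i) of the Remark above. First I reduce to the case that $\mathbf m$ is monotone: replacing $\mathbf m$ and $\lambda(t)$ by a fixed monotone reordering affects neither the matrices $L(\mathbf m;\lambda(t))$ (up to the permutation built into their definition) nor $\dim Z_{M(n,\mathbb C)}$, nor the assertion. Next I check that $\ell:=\lim_{t\to0}\lambda(t)$ exists. Since $L(\mathbf m;\lambda(t))$ is block upper triangular with diagonal blocks $\lambda_\nu(t)I_{m_\nu}$, the characteristic polynomial of $A(t)$ equals $\prod_{\nu=1}^N\bigl(x-\lambda_\nu(t)\bigr)^{m_\nu}$, which by continuity of $A$ tends as $t\to0$ to the characteristic polynomial of $A(0)$; hence the numbers $\lambda_\nu(t)$ stay bounded and, for small $t$, lie in an arbitrarily small union of disjoint disks centred at the eigenvalues of $A(0)$. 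Being continuous in $t$, each $\lambda_\nu(t)$ is confined to a single one of these disks, so $\lambda_\nu(t)$ converges to an eigenvalue $\ell_\nu$ of $A(0)$, and comparing the two resulting factorizations shows that $A(0)$ and $L(\mathbf m;\ell)$ have the same characteristic polynomial.

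For $t\in(0,1)$, part~i) of the Remark gives $\rank\prod_{\nu=1}^k\bigl(A(t)-\lambda_\nu(t)\bigr)=n-(m_1+\cdots+m_k)$ for $k=0,\dots,N$. Letting $t\to0$ and using $\prod_{\nu=1}^k\bigl(A(t)-\lambda_\nu(t)\bigr)\to\prod_{\nu=1}^k\bigl(A(0)-\ell_\nu\bigr)$ together with the lower semicontinuity of the rank, we obtain
\[
 \rank\prod_{\nu=1}^k\bigl(A(0)-\ell_\nu\bigr)\le n-(m_1+\cdots+m_k)\qquad(k=0,\dots,N),
\]
and the same inequalities hold for every ordering of $\ell$ compatible with the monotonicity of $\mathbf m$, since permuting the eigenvalues attached to equal parts of $\mathbf m$ leaves $L(\mathbf m;\lambda(t))$ unchanged up to conjugacy. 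On the other hand, by \eqref{eq:cent} we have $\dim Z_{M(n,\mathbb C)}\bigl(A(t)\bigr)=m_1^2+m_2^2+\cdots=\dim Z_{M(n,\mathbb C)}\bigl(L(\mathbf m;\ell)\bigr)$ for $t\in(0,1)$, so the constancy hypothesis yields $\dim Z_{M(n,\mathbb C)}\bigl(A(0)\bigr)=m_1^2+m_2^2+\cdots$.

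It remains to deduce $A(0)\sim L(\mathbf m;\ell)$ from these two facts, and this is the main point. The rank inequalities say precisely that for every eigenvalue $\mu$ of $A(0)$ the conjugate of the Jordan partition of $A(0)$ at $\mu$ dominates the partition $(m_\nu)_{\nu\,:\,\ell_\nu=\mu}$, i.e.\ that $A(0)$ lies in the closure of the conjugacy class of $L(\mathbf m;\ell)$. But by \eqref{eq:cent} the number $\dim Z_{M(n,\mathbb C)}(\cdot)$ of a matrix is the sum, over its eigenvalues, of the sums of squares of the parts of the corresponding partitions; since $p\mapsto\sum_i p_i^2$ is strictly Schur-convex and each partition belonging to $A(0)$ is dominated by the one belonging to $L(\mathbf m;\ell)$, the equality of the two centralizer dimensions forces these partitions to coincide eigenvalue by eigenvalue, whence \eqref{eq:Leigen} and \eqref{eq:LJordan} give $A(0)\sim L(\mathbf m;\ell)$. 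The hard part is exactly this translation — extracting, from the rank inequalities, that the Jordan type of $A(0)$ degenerates that of $L(\mathbf m;\ell)$ at each eigenvalue separately, so that the centralizer count may be applied there, and then using the strict monotonicity of $p\mapsto\sum_i p_i^2$ along the dominance order; the existence of $\lim_{t\to0}\lambda(t)$, settled above, is a secondary technical point.
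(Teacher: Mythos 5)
Your overall strategy coincides with the paper's: lower semicontinuity of rank to get dominance inequalities between the partition $\mathbf m$ and the rank--difference partitions of $A(0)$, then the constancy of $\dim Z_{M(n,\mathbb C)}$ via \eqref{eq:cent} together with the strict monotonicity of $p\mapsto\sum p_i^2$ along the dominance order (this last fact is exactly the lemma the paper proves immediately after Lemma~\ref{lmm:conj}). However, the step you yourself flag as ``the hard part'' is a genuine gap, not a technicality, and as set up it does not go through. You apply semicontinuity only to the initial-segment products $\prod_{\nu=1}^k\bigl(A(t)-\lambda_\nu(t)\bigr)$, $k=0,\dots,N$ (and their reorderings compatible with monotonicity of $\mathbf m$), and then assert that the resulting inequalities ``say precisely'' that, at each eigenvalue $\mu$ of $A(0)$, the rank--difference partition of $A(0)$ dominates $(m_\nu)_{\nu:\ell_\nu=\mu}$. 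They do not: the subsets of indices you need are of the form $\{i:\ell_i=\mu\}$ and their initial pieces, and these are in general not initial segments of any admissible reordering. For instance, with $\mathbf m=(2,2,1,1)$ and $\ell=(a,b,a,b)$, $a\ne b$, the needed inequality $\rank(A(0)-a)^2\le n-(m_1+m_3)$ corresponds to the subset $\{1,3\}$, which no monotone reordering makes initial; the initial-segment inequalities for $k=1,2,3,4$ mix the two eigenvalues and only bound sums such as $(p^a_1+p^a_2)+p^b_1$ from below, which is not enough to isolate $p^a_1+p^a_2$.

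The fix is the one the paper uses, and it is simpler than what you attempt: for each $\mu$, let $i_1<\cdots<i_K$ be the indices with $\lambda_i(t)\to\mu$ and apply semicontinuity to the sub-products $P_k(t):=\prod_{\nu=1}^{k}\bigl(A(t)-\lambda_{i_\nu}(t)\bigr)$. These converge to $(A(0)-\mu)^k$ as $t\to0$, and for every $t\in(0,1)$ one has $\rank P_k(t)\le n-(m_{i_1}+\cdots+m_{i_k})$ (decompose $L(\mathbf m;\lambda(t))$ by \eqref{eq:Leigen}: the factors act invertibly off the relevant blocks, and on each block the sum of the $s$ largest parts majorizes the sum of the $s$ chosen parts). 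Hence $\rank(A(0)-\mu)^k\le n-(m_{i_1}+\cdots+m_{i_k})$ directly, which is the per-eigenvalue dominance you need; the rest of your argument (centralizer count plus strict Schur-convexity of the sum of squares) then closes the proof exactly as in the paper. Your verification that $\lim_{t\to0}\lambda(t)$ exists via the characteristic polynomial is fine and is more detailed than the paper's one-line assertion.
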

\begin{proof}
The proof is reduced to the result (cf.~Remark~20) in \cite{Os}
but a more elementary proof will be given.
First note that $\lim_{t\to0}\lambda(t)$ exists.

We may assume that $\mathbf m$ is monotone.
Fix $\mu\in\mathbb C$ and put
$\{i_1,\dots,i_K\}=\{i;\,\lambda_i(0)=\mu\}$
with $1\le i_1< i_2<\cdots< i_K\le N$.
Then
\[
 \rank\bigl(A(0)-\mu\bigr)^k\le 
 \rank\prod_{\nu=1}^k
 \bigl(A(t)-\lambda_{i_\nu}(t)\bigr)=
 n-(m_{i_1}+\cdots+m_{i_k}).
\]
Putting $m'_{i_k}=\rank\bigl(A(0)-\mu\bigr)^{k-1}
 - \rank\bigl(A(0)-\mu\bigr)^k$, we have
\begin{gather*}
  m_{i_1}\ge m_{i_2}\ge\cdots\ge m_{i_K}>0,\quad
  m'_{i_1}\ge m'_{i_2}\ge\cdots\ge m'_{i_K}\ge 0,\\
  m_{i_1}+\cdots+m_{i_k}\le m'_{i_1}+\cdots+m'_{i_k}\quad(k=1,\dots,K).
\end{gather*}
Then the following lemma and the equality $\sum m_i^2=\sum (m'_i)^2$
imply $m_i=m'_i$. 
\end{proof}
\begin{lmm}
Let $\mathbf m$ and $\mathbf m'\in\mathcal P_1$ be monotone partitions
satisfying
\begin{equation}\label{eq:LB}
  m_1+\cdots+m_j\le m_1'+\cdots+m'_j\quad(j=1,2,\ldots).
\end{equation}
If $\mathbf m\ne\mathbf m'$, then
\begin{equation*}
 \sum_{j=1}^\infty m_j^2 < \sum_{j=1}^\infty (m_j')^2.
\end{equation*}
\end{lmm}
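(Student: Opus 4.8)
The plan is to rewrite the difference of the two sums by Abel summation and then read off both the inequality and its strictness directly from the hypotheses. Set
\[
 D_j:=m_j'-m_j,\qquad S_j:=m_j'+m_j,\qquad E_j:=\sum_{i=1}^j D_i=(m_1'+\cdots+m_j')-(m_1+\cdots+m_j),
\]
with $E_0:=0$. Since $m_j=m_j'=0$ for all large $j$, only finitely many terms are nonzero and Abel summation gives
\[
 \sum_{j\ge1}\bigl((m_j')^2-m_j^2\bigr)=\sum_{j\ge1}D_jS_j=\sum_{j\ge1}E_j\,(S_j-S_{j+1}).
\]
Now $E_j\ge0$ is precisely condition \eqref{eq:LB}, while $S_j-S_{j+1}=(m_j'-m_{j+1}')+(m_j-m_{j+1})\ge0$ because both $\mathbf m$ and $\mathbf m'$ are monotone; hence every summand is $\ge0$, which already yields the weak inequality $\sum_j(m_j')^2\ge\sum_j m_j^2$.

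To upgrade this to a strict inequality I would exhibit a single strictly positive summand. Let $a$ be the least index with $m_a\ne m_a'$. Then $E_{a-1}=0$, so $E_a=m_a'-m_a$; since $E_a\ge0$ by \eqref{eq:LB} and $m_a\ne m_a'$, we get $m_a'>m_a$, hence $E_a\ge1$ and $S_a\ge m_a'\ge1$. Because $(S_j)_{j\ge a}$ is a non-increasing sequence of non-negative integers converging to $0$ with $S_a\ge1$, there is a least $j^*\ge a$ with $S_{j^*}>S_{j^*+1}$. For $a\le j<j^*$ the equality $S_j=S_{j+1}$ together with $m_j\ge m_{j+1}$ and $m_j'\ge m_{j+1}'$ forces $m_j=m_{j+1}$ and $m_j'=m_{j+1}'$; iterating, $m_j=m_a$ and $m_j'=m_a'$ for all $a\le j\le j^*$, so $E_j-E_{j-1}=m_a'-m_a=E_a$ for $a<j\le j^*$, and therefore $E_{j^*}=(j^*-a+1)E_a\ge1$. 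Consequently $E_{j^*}(S_{j^*}-S_{j^*+1})\ge1$, and since the remaining summands are $\ge0$ we conclude $\sum_j\bigl((m_j')^2-m_j^2\bigr)\ge1$, as desired.

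The step that needs care is this last argument: one must rule out the possibility that $E_j$ returns to $0$ before a genuine descent of $S$ occurs, and this is exactly where monotonicity of both tuples is used — it pins $\mathbf m$ and $\mathbf m'$ down on the whole block of indices from $a$ to $j^*$. Note also that no equality of orders is required: \eqref{eq:LB} already forces $\ord\mathbf m\le\ord\mathbf m'$, and the telescoping identity above remains valid regardless.
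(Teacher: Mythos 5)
Your argument is correct. The summation-by-parts identity $\sum_j\bigl((m_j')^2-m_j^2\bigr)=\sum_j E_j(S_j-S_{j+1})$ is valid (all sums are finite), both factors are non-negative under the hypotheses, and your block argument for strictness is sound: on the plateau $a\le j\le j^*$ of $S$ both partitions are constant, so $E$ increases by $m_a'-m_a\ge1$ at each step and cannot return to $0$ before the first genuine descent of $S$, whence the summand at $j^*$ is at least $1$. This is, however, a genuinely different route from the paper's. The paper argues by induction on a lexicographic order: it locates the first index $p$ where \eqref{eq:LB} is strict, and if $p<K$ it performs a single ``box move'' $m''_j=m_j'-\delta_{j,q}+\delta_{j,r}$ (with $q<r$ chosen from the shape of $\mathbf m'$) producing a monotone $\mathbf m''$ still dominating $\mathbf m$ with strictly smaller $\sum(m''_j)^2$; iterating walks $\mathbf m'$ down to $\mathbf m$ through the dominance order. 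That approach is the standard combinatorial one and exhibits the chain of covering relations explicitly, but it requires the careful case analysis around $p,q,r$ and the induction bookkeeping. Your Abel-summation proof is shorter, avoids induction entirely, yields the quantitative bound $\sum(m_j')^2-\sum m_j^2\ge1$ directly, and, as you note, makes transparent that equality of $\ord\mathbf m$ and $\ord\mathbf m'$ is not needed — only the one-sided condition \eqref{eq:LB}.
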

\begin{proof}
Let $K$ be the largest integer with $m_K\ne 0$ and $p$ be the smallest 
integer $j$ such that the inequality in \eqref{eq:LB} holds.  
Note that the lemma is clear if $p\ge K$.

Suppose $p<K$. Then $m'_p>1$.
Let $q$ and $r$ be the smallest integers
satisfying $m'_p>m'_{q+1}$ and $m'_p-1>m'_r$.
Then $m_p<m'_q$ and the inequality in \eqref{eq:LB} holds for 
$k=p,\dots,r-1$ because $m_k\le m_p\le m'_{r-1}$.
\[
\begin{matrix}
 m_1',&\ldots,&m'_{p-1},&m'_p,&\ldots,&m'_q,&m'_{q+1},&\ldots,&m'_{r-1},&m'_r\\[-7pt]
 \text{\rotatebox{270}{$=$}}&&\text{\rotatebox{270}{$=$}}&
 \text{\rotatebox{270}{$>$}}&&\text{\rotatebox{270}{$>$}}&
 \text{\rotatebox{270}{$\ge$}}&&\text{\rotatebox{270}{$\ge$}}&\\
 m_1,&\ldots,&m_{p-1},&m_p,&\ldots,&m_q,&m_{q+1},&\ldots,&m_{r-1},&m_r\\
\end{matrix}
\]
Here $p\le q<r\le K+1$, $m'_p=\cdots=m'_q>m'_{q+1}=\cdots=m'_{r-1}$
and $m'_{r-1}>m'_r$. Put
\[
  m''_j = m_j'-\delta_{j,q}+\delta_{j,r}.
\]
Then $\mathbf m''$ is monotone, $\sum (m''_j)^2 < (\sum m'_j)^2$ and
$
  m_1+\cdots+m_j\le m''_1+\cdots+m''_j\quad(j=1,2,\ldots).
$
Thus we have the lemma by the induction on the lexicographic order
of the triplet $(K-p,m_p',q)$ for a fixed $\mathbf m$.
\end{proof}
\begin{prp}\label{prp:conj}
Let $A(t)$ be a real analytic map of $(-1,1)$ to $M(n,\mathbb C)$ such
that $\dim Z_{\mathfrak g}\bigl(A(t)\bigr)$ doesn't depend on $t$.
Then there exist a partition $\mathbf m=(m_1,\dots,m_N)$ of $n$ and a 
continuous function $\lambda(t)=(\lambda_1(t),\dots,\lambda_N(t))$ of $(-1,1)$
satisfying
\begin{equation}
 A(t)\sim L\bigl(\mathbf m;\lambda(t)\bigr).
\end{equation}
\end{prp}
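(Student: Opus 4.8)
The plan is to prove the statement first on a neighbourhood of each point $t_0\in(-1,1)$, with a partition that turns out to be $\spt A(t_0)$, and then to patch these local parametrizations into a global one; the one non‑formal input is Lemma~\ref{lmm:conj}.

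\emph{Local version near $t_0$.} Since $A$ is real analytic it extends holomorphically to a complex neighbourhood of $t_0$, so $\det\bigl(xI-A(t)\bigr)$ has coefficients holomorphic in $t$ there. By the Puiseux expansion of its roots there is an integer $q\ge1$ such that, after the substitution $t=t_0\pm s^q$, the eigenvalues of $\tilde A(s):=A(t_0\pm s^q)$ become holomorphic functions of the real variable $s$ near $0$; I group them into classes of identically equal functions and pick representatives $\tilde\mu_1(s),\dots,\tilde\mu_r(s)$. Each difference $\tilde\mu_i-\tilde\mu_j$ ($i\neq j$) is a non‑zero holomorphic function, hence does not vanish on some punctured interval $0<s<\varepsilon$; shrinking $\varepsilon$ further the integers $\rank\bigl(\tilde A(s)-\tilde\mu_i(s)\bigr)^p$ are also constant there (the rank of a holomorphic matrix family being locally constant off a discrete set). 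Thus for $0<s<\varepsilon$ the Jordan data of $\tilde A(s)$ is rigid, and by \eqref{eq:Leigen} and \eqref{eq:LJordan} one reads off a monotone partition $\mathbf m$ of $n$ and a holomorphic $\tilde\lambda(s)\in\mathbb C^N$ (assigning to each part of $\mathbf m$ the appropriate $\tilde\mu_i(s)$) with $\tilde A(s)\sim L\bigl(\mathbf m;\tilde\lambda(s)\bigr)$. Because $\dim Z\bigl(\tilde A(s)\bigr)$ is constant in $s$, Lemma~\ref{lmm:conj} applies at $s=0$ and yields $\tilde A(0)\sim L\bigl(\mathbf m;\tilde\lambda(0)\bigr)$, so $\mathbf m=\spt A(t_0)$. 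Undoing the substitution, $\lambda(t):=\tilde\lambda\bigl((t-t_0)^{1/q}\bigr)$ is continuous on $[t_0,t_0+\varepsilon^q)$ with $A(t)\sim L\bigl(\mathbf m;\lambda(t)\bigr)$; the same works on the left, and the two partitions obtained coincide (both equal $\spt A(t_0)$), so after permuting the left‑hand $\lambda$ by an element of the stabilizer $G:=\{\sigma\in\mathfrak S_N:m_{\sigma(i)}=m_i\}$ of $\mathbf m$ — such a permutation does not change the conjugacy class of $L(\mathbf m;\cdot)$, again by \eqref{eq:Leigen} — the two halves glue to a continuous $\lambda$ near $t_0$ with $A(t)\sim L\bigl(\mathbf m;\lambda(t)\bigr)$.

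\emph{Globalization.} The local version shows $t\mapsto\spt A(t)$ is locally constant, hence constant on the connected interval $(-1,1)$; call it $\mathbf m$, with stabilizer $G$. For every $t$ the set $F_t:=\{\lambda\in\mathbb C^N:L(\mathbf m;\lambda)\sim A(t)\}$ is a single $G$‑orbit: by \eqref{eq:Leigen} and \eqref{eq:LJordan}, $L(\mathbf m;\lambda)\sim L(\mathbf m;\lambda')$ exactly when the multiset $\{m_i:\lambda_i=\nu\}$ equals $\{m_i:\lambda'_i=\nu\}$ for every $\nu$, which is precisely the condition $\lambda'=\sigma\lambda$ for some $\sigma\in G$. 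Two remarks then close the argument. First, any continuous section $t\mapsto\lambda(t)$ of $\bigcup_tF_t$ over a half‑open interval extends continuously to the closed endpoint: near that endpoint the branches $\tilde\mu_i$ are pairwise distinct off the endpoint, so a continuous coordinate of $\lambda$ must coincide there with a single $\tilde\mu_i$ and hence has a limit, which by Lemma~\ref{lmm:conj} again lies in $F$. Second, where such a section meets a local section of the local version (at one point), an element of $G$ matches the two values, and the $G$‑translate of the local section is still a valid continuous section, so the two glue. A Zorn's‑lemma argument on continuous sections over open subintervals, ordered by extension, produces a maximal $(J^*,\lambda^*)$; if $J^*\ne(-1,1)$ it has an endpoint $t^*\in(-1,1)$, and the two remarks extend $\lambda^*$ past $t^*$, contradicting maximality. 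Hence $J^*=(-1,1)$, which gives the asserted $\mathbf m$ and $\lambda$.

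The real analyticity enters exactly through the Puiseux expansion — both to make the eigenvalues holomorphic after a ramified change of parameter and to force a continuous section's coordinates to select fixed branches near a boundary point; everything else is formal manipulation of the $L(\mathbf m;\lambda)$ together with Lemma~\ref{lmm:conj}. I expect the only genuinely delicate point to be the bookkeeping that turns the purely local data into one partition $\mathbf m$ and one globally continuous $\lambda$: this is handled by observing that $\dim Z\bigl(A(t)\bigr)$ being constant forces $\spt A(t)$ to be constant, and that every relabeling one ever has to perform lies in the $t$‑independent group $G$, so that gluing over single points suffices.
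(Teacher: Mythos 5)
Your argument is correct and follows essentially the same route as the paper: decompose $(-1,1)$ into intervals where the eigenvalue branches are analytic and the Jordan data is constant, invoke Lemma~\ref{lmm:conj} at the exceptional endpoints, and glue the local parametrizations by permutations preserving $\mathbf m$. You merely supply details the paper leaves implicit (the Puiseux expansion justifying the local analytic branches, the identification of the gluing ambiguity with the stabilizer of $\mathbf m$) and replace the paper's inductive gluing over a countable cover by a Zorn's-lemma maximal-section argument.
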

\begin{proof}
We find $c_j\in(-1,1)$, monotone partitions 
$\mathbf m^{(j)}\in\mathcal P^{(n)}_1$ and
real analytic functions $\lambda^{(j)}(t)=(\lambda^{(j)}_1(t),\ldots)$
on $I_j:=(c_j,c_{j+1})$ such that
\begin{gather*}
  c_{j-1}<c_j<c_{j+1},\ \lim_{\pm j\to\infty}c_j=\pm1, \ 
  A(t)\sim L\bigl(\mathbf m^{(j)};\lambda^{(j)}(t)\bigr)
  \quad(\forall t\in I_j).
\end{gather*}
Lemma~\ref{lmm:conj} assures that we may assume 
$\lambda^{(j)}(t)$ is continuous on the closure $\bar I_j$ of $I_j$ 
and $A(t)\sim L\bigl(\mathbf m^{(j)};\lambda^{(j)}(t)\bigr)$ for
$t\in\bar I_j$.  Hence $\mathbf m^{(j)}$ doesn't depend on
$j$, which we denoted by $\mathbf m$.
We can inductively define permutations 
$\sigma_{\pm j}$ of the indices $\{1,\dots,N\}$ for $j=1,2,\ldots$
so that $\sigma_0=id$, $m_{\sigma_{\pm j}(p)}=m_p$ for $p=1,\dots,N$
and moreover that 
$\bigl(\lambda^{(\nu)}_{\sigma_{\nu}(1)}(t),\dots,\lambda^{(\nu)}_{\sigma_{\nu}(N)}(t)\bigr)$ 
for $-j\le\nu\le j$ define a continuous function on $(c_{-j}, c_{j+1})$.
\end{proof}
\begin{rem} \textrm{i) } 
Suppose that $\dim Z_{M(n,\mathbb C)}\bigl(A(t)\bigr)$ is constant
for a continuous map $A(t)$ of $(-1,1)$ to $M(n,\mathbb C)$. 
For $c\in(-1,1)$ we can find $t_j\in(-1,1)$ and $\mathbf m\in\mathcal P^{(1)}$ 
such that $\lim_{j\to\infty}t_j=c$ and $\spt A(t_j)=\mathbf m$. 
The proof of Lemma~\ref{lmm:conj} shows $\spt A(c)=\mathbf m$.
Hence
\begin{equation}
 \spt A(t) \text{ doesn't depend on $t$} \ 
\Leftrightarrow \ \dim Z_{M(n,\mathbb C)}(A) \text{ doesn't depend on $t$}.
\end{equation}

\textrm{ii) }
It is easy to show that Proposition~\ref{prp:conj} is valid even if 
we replace ``real analytic" by ``continuous" 
but it is not true if we replace ``real analytic" and ``$(-1,1)$" 
by ``holomorphic" and ``$\{t\in\mathbb C\,;\,|t|<1\}$", respectively. 
The matrix 
$A(t)=\left(\begin{smallmatrix}0 & 1\\t & 0\end{smallmatrix}\right)$ 
is a counter example.
\end{rem}
\section{Deligne-Simpson problem}
For simplicity we put $\mathfrak g=M(n,\mathbb C)$ and $G=GL(n,\mathbb C)$
only in this section.

Let $\mathbf A=(A_0,\dots,A_k)\in\mathfrak g^{k+1}$.
Put
\begin{align}
 M(n,\mathbb C)^{k+1}_0&:=\{(C_0,\dots,C_k)\in\mathfrak g^{k+1}\,;\,
 C_0+\cdots+C_k=0\},\\
 Z_{\mathfrak g}(\mathbf A)&:=\{X\in\mathfrak g\,;\,[A_j,X]=0\ \ 
(j=0,\dots,k)\}.
\end{align}
A tuple of matrices  $\mathbf A\in\mathfrak g^{k+1}$
is called {\sl irreducible} if any subspace $V\subset\mathbb C^n$ 
satisfying $A_jV\subset V$ for $j=0,\dots,k$ is $\{0\}$ or $\mathbb C^n$.

Suppose $\trace A_0+\dots+\trace A_k=0$.
The {\sl additive Deligne-Simpson problem} presented by Kostov \cite{Ko}
is to determine the condition to $\mathbf A$ for the existence of
an irreducible tuple $\mathbf B=(B_0,\dots,B_k)\in M(n,\mathbb C)^{k+1}_0$ 
satisfying $A_j\sim B_j$ for $j=0,\dots,k$. 
The condition is concretely given by Crawley-Boevey \cite{CB} 
(cf.~Theorem~\ref{thm:CB} and \cite{Ko4}).

Suppose $\mathbf A\in M(n,\mathbb C)^{k+1}_0$. 
Then $\mathbf A$ is called {\sl rigid} if $\mathbf A\sim \mathbf B$
for any element $\mathbf B=(B_0,\dots,B_k)\in M(n,\mathbb C)^{k+1}_0$
satisfying $B_j\sim A_j$ for $j=0,\dots,k$.
Here we denote $\mathbf A\sim\mathbf B$ if there exists $g\in G$ with
$(B_0,\dots,B_k)=(gA_0g^{-1},\dots,gA_kg^{-1})$.

\begin{rem}
Note that
the local monodromy at $\infty$ of the Fuchsian system
\begin{equation}\label{eq:Fuchs}
  \frac{du}{dz} = \sum_{j=1}^k\frac{A_j}{z-z_j}u
\end{equation}
on a Riemann sphere corresponds to $A_0$
with $\mathbf A=(A_0,\dots,A_k)\in M(n,\mathbb C)^{k+1}_0$.
Then the quotient $M(n,\mathbb C)^{k+1}_0\!/\!\!\sim$ classifies the Fuchsian
systems.
\end{rem}

Under the identification of $\mathfrak g$ with its dual space
by the symmetric bilinear form $\langle X,Y\rangle=\trace XY$
for $(X,Y)\in\mathfrak g^2$, 
the dual map of $\ad_A:X\mapsto[A,X]$ of $\mathfrak g$ equals 
$-\ad_A$ and therefore $\ad_A(\mathfrak g)$
is the orthogonal compliment of $\ker\ad_A$ under the bilinear form:
\begin{equation}\label{eq:ad}
 \ad_A(\mathfrak g):=
 \{[A,X]\,;\,X\in\mathfrak g\}=\{X\in\mathfrak g\,;\,\trace XY=0
\quad(\forall Y\in Z_{\mathfrak g}(A))\}.
\end{equation}

For $\mathbf A=(A_0,\dots,A_k)\in \mathfrak g^{k+1}$
we put
\begin{equation*}
 \begin{matrix}
 \pi_{\mathbf A}\,:&G^{k+1}&\to&\mathfrak g\\
         &\rotatebox{90}{$\in$}&&\rotatebox{90}{$\in$}\\
         &(g_0,\dots,g_k)&\mapsto&\sum_{j=0}^k g_jA_jg_j^{-1}
 \end{matrix}
\end{equation*}
The image of $\pi_{\mathbf A}$ is a homogeneous space $G^{k+1}/H$
of $G^{k+1}$ with
\[
  H:=\{(g_0,\dots,g_k)\in G^{k+1}\,;\,
  \sum_{j=0}^k g_jA_jg_j^{-1}=\sum_{j=0}^kA_j
 \}
\]
and the tangent space of the image at
$A_0+\dots+A_k$ is isomorphic to
\begin{equation*}
 \sum_{j=0}^k \ad_{A_j}(\mathfrak g)
 =\bigl\{X\in\mathfrak g\,;\,\trace XY = 0\quad\bigl(\forall Y\in
 Z_{\mathfrak g}(\mathbf A)=\bigcap_{j=0}^{k}Z_{\mathfrak g}(A_j)\bigr)\bigr\}.
\end{equation*}
Hence the dimension of the manifold $G^{k+1}/H$ 
equals $n^2 - \dim  Z_{\mathfrak g}(\mathbf A)$ and therefore the dimension
of $H$ equals $kn^2+\dim  Z_{\mathfrak g}(\mathbf A)$.
Since the manifold
\begin{equation}
 \widetilde O_{\mathbf A}:=\{(C_0,\dots,C_k)\in\mathfrak g^{k+1}\,;\,
 C_j\sim A_j\text{ and }\sum_{j=0}^k C_j=\sum_{j=0}^k A_j\}
\end{equation}
is naturally isomorphic to 
$H/Z_{G}(A_0)\times\cdots\times Z_{G}(A_k)$ with
$Z_G(A_j):=\{g\in G\,;\,gA_jg^{-1}=A_j\}$,
the dimension of $\widetilde O_{\mathbf A}$ equals 
$kn^2+\dim  Z_{\mathfrak g}(\mathbf A)-\sum_{j=0}^k
\dim  Z_{\mathfrak g}(\mathbf A_j)$.

Note that the dimension of the manifold
\begin{equation}
  O_{\mathbf A}:=\bigcup_{g\in G}(gA_0g^{-1},\dots,gA_kg^{-1})
 \subset\mathfrak g^{k+1}
\end{equation}
equals $n^2 - \dim  Z_{\mathfrak g}(\mathbf A)$.

Suppose $\mathbf A\in M(n,\mathbb C)^{k+1}_0$.
Then $\widetilde O_{\mathbf A}\supset O_{\mathbf A}$
and we have the followings.
\begin{prp}
$\dim\widetilde O_{\mathbf A}-\dim O_{\mathbf A}
 = (k-1)n^2-\displaystyle\sum_{j=0}^k\dim  Z_{\mathfrak g}(A_j)
  +2\dim  Z_{\mathfrak g}(\mathbf A)$.
\end{prp}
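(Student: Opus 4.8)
The plan is to read the statement off from the two dimension counts already carried out in the paragraphs immediately preceding it, so that the proof reduces to a single subtraction; no genuinely new argument is needed.

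First I would recall the expression for $\dim\widetilde O_{\mathbf A}$. Since $\widetilde O_{\mathbf A}$ was identified with the homogeneous space $H/\bigl(Z_G(A_0)\times\cdots\times Z_G(A_k)\bigr)$, and since $\dim H=kn^2+\dim Z_{\mathfrak g}(\mathbf A)$ was obtained from the tangent space computation for the image of $\pi_{\mathbf A}$ together with the orthogonality relation \eqref{eq:ad}, we have
\[
 \dim\widetilde O_{\mathbf A}=kn^2+\dim Z_{\mathfrak g}(\mathbf A)-\sum_{j=0}^k\dim Z_{\mathfrak g}(A_j).
\]
Next I would recall that $O_{\mathbf A}$ is the $G$-orbit of $\mathbf A$ under simultaneous conjugation, i.e.\ $O_{\mathbf A}\cong G/Z_G(\mathbf A)$, so that $\dim O_{\mathbf A}=n^2-\dim Z_{\mathfrak g}(\mathbf A)$.

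Subtracting the second formula from the first yields
\[
 \dim\widetilde O_{\mathbf A}-\dim O_{\mathbf A}
 =(kn^2-n^2)+\bigl(\dim Z_{\mathfrak g}(\mathbf A)+\dim Z_{\mathfrak g}(\mathbf A)\bigr)-\sum_{j=0}^k\dim Z_{\mathfrak g}(A_j),
\]
which is exactly $(k-1)n^2-\sum_{j=0}^k\dim Z_{\mathfrak g}(A_j)+2\dim Z_{\mathfrak g}(\mathbf A)$, the claimed identity. There is no real obstacle here: both ingredients are already in hand, and the only points needing mild care are that the two spaces are genuinely smooth manifolds—homogeneous spaces of $H$ and of $G$ respectively—so that the naive relation $\dim(\Gamma/\Gamma')=\dim\Gamma-\dim\Gamma'$ is legitimate, and that the inclusion $\widetilde O_{\mathbf A}\supset O_{\mathbf A}$ (valid since $\mathbf A\in M(n,\mathbb C)^{k+1}_0$) makes the difference a bona fide codimension.
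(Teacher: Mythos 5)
Your proposal is correct and follows exactly the route the paper intends: the proposition is stated as an immediate consequence of the two dimension formulas derived just before it, namely $\dim\widetilde O_{\mathbf A}=kn^2+\dim Z_{\mathfrak g}(\mathbf A)-\sum_{j=0}^k\dim Z_{\mathfrak g}(A_j)$ and $\dim O_{\mathbf A}=n^2-\dim Z_{\mathfrak g}(\mathbf A)$, and your subtraction reproduces the stated identity. The paper gives no separate proof, so your write-up matches its (implicit) argument.
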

\begin{dfn}\label{defn:pidx}
The index of rigidity $\idx \mathbf A$ of $\mathbf A$ 
is introduced by \cite{Kz}:
\begin{align*}
 \idx\mathbf A &:=\sum_{j=0}^k\dim Z_{\mathfrak g}(A_j) - (k-1)n^2
 =2n^2 - \sum_{j=0}^k \dim\{gA_jg^{-1}\,;\,g\in G\},\\
 \Pidx\mathbf A
 &:=\dim Z_{\mathfrak g}(\mathbf A)
  + \tfrac12(k-1)n^2- \tfrac12\sum_{j=0}^k \dim Z_{\mathfrak g}(A_j)
  = \dim Z_{\mathfrak g}(\mathbf A) - \tfrac12\idx\mathbf m.
\end{align*}
Note that $\Pidx\mathbf A\ge0$ and $\dim\{gA_jg^{-1}\,;\,g\in G\}$ are even.
\end{dfn}
\begin{crl}
$\dim\widetilde O_{\mathbf A}-\dim O_{\mathbf A}$ and\/ $\idx\mathbf A$ are 
even and $\idx\mathbf A\le2\dim Z_{\mathfrak g}(\mathbf A)$.
\end{crl}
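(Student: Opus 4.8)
The plan is to deduce all three assertions from the Proposition immediately preceding the statement together with Definition~\ref{defn:pidx}; the only input about matrices that is genuinely needed is that each individual conjugacy class in $\mathfrak g$ has even dimension.

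First I would prove that evenness. Given $B\in\mathfrak g$ with $\spt B=(m_1,\dots,m_N)$, I decompose $B$ along its generalized eigenspaces by \eqref{eq:Leigen} and apply \eqref{eq:cent} to each eigenvalue block; this yields $\dim Z_{\mathfrak g}(B)=m_1^2+m_2^2+\cdots$, and hence
\[
  \dim\{gBg^{-1}\,;\,g\in G\}=n^2-\dim Z_{\mathfrak g}(B)
  =\Bigl(\sum_i m_i\Bigr)^2-\sum_i m_i^2
  =2\sum_{i<j}m_im_j ,
\]
which is even. Putting $B=A_j$ and summing over $j$, the identity $\idx\mathbf A=2n^2-\sum_{j=0}^k\dim\{gA_jg^{-1};g\in G\}$ recorded in Definition~\ref{defn:pidx} exhibits $\idx\mathbf A$ as a difference of even integers, so $\idx\mathbf A$ is even.

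Next I would substitute the other expression $\idx\mathbf A=\sum_{j=0}^k\dim Z_{\mathfrak g}(A_j)-(k-1)n^2$ into the right-hand side of the preceding Proposition. This turns $\dim\widetilde O_{\mathbf A}-\dim O_{\mathbf A}=(k-1)n^2-\sum_j\dim Z_{\mathfrak g}(A_j)+2\dim Z_{\mathfrak g}(\mathbf A)$ into $2\dim Z_{\mathfrak g}(\mathbf A)-\idx\mathbf A$, which by the definition of $\Pidx$ equals $2\,\Pidx\mathbf A$; since $\idx\mathbf A$ is even this is an even integer, so $\dim\widetilde O_{\mathbf A}-\dim O_{\mathbf A}$ is even. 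Finally, because $O_{\mathbf A}$ sits inside the manifold $\widetilde O_{\mathbf A}$ we get $\dim\widetilde O_{\mathbf A}-\dim O_{\mathbf A}\ge0$, i.e. $\Pidx\mathbf A\ge0$, which unwinds to $\idx\mathbf A\le2\dim Z_{\mathfrak g}(\mathbf A)$.

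I do not anticipate a real obstacle: the two parity statements and the inequality are pure bookkeeping with the Proposition and the two forms of $\idx\mathbf A$, and the inequality is nothing but the inclusion $O_{\mathbf A}\subset\widetilde O_{\mathbf A}$ read off on dimensions. The only place that calls for an actual (one-line) computation rather than formal manipulation is the evenness of a single conjugacy-class dimension, which is where one uses \eqref{eq:cent}.
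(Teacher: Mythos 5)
Your argument is correct and is essentially the proof the paper intends: the identity $\dim\widetilde O_{\mathbf A}-\dim O_{\mathbf A}=2\dim Z_{\mathfrak g}(\mathbf A)-\idx\mathbf A=2\Pidx\mathbf A$ from the Proposition, the evenness of each $\dim\{gA_jg^{-1}\,;\,g\in G\}$ via \eqref{eq:cent}, and the inclusion $O_{\mathbf A}\subset\widetilde O_{\mathbf A}$ giving $\Pidx\mathbf A\ge0$ are exactly the facts noted in Definition~\ref{defn:pidx} that the Corollary is meant to combine. No gaps.
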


Note that if $\mathbf A$ is irreducible, 
$\dim Z_{\mathfrak g}(\mathbf A)=1$.

The following result by Katz is fundamental.
\begin{thm}[\cite{Kz}]
Suppose $\mathbf A\in M(n,\mathbb C)^{k+1}_0$ is irreducible.
Then $\idx\mathbf A=2$ if and only if 
$\mathbf A$ is rigid, namely, $\widetilde O_{\mathbf A}=O_{\mathbf A}$.
\end{thm}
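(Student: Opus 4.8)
The whole proof is organised around the dimension identity that results from substituting Definition~\ref{defn:pidx} into the Proposition computing $\dim\widetilde O_{\mathbf A}-\dim O_{\mathbf A}$. Since $(k-1)n^2-\sum_{j=0}^k\dim Z_{\mathfrak g}(A_j)=-\idx\mathbf A$, that Proposition reads
\[
 \dim\widetilde O_{\mathbf A}-\dim O_{\mathbf A}=2\dim Z_{\mathfrak g}(\mathbf A)-\idx\mathbf A ,
\]
and irreducibility of $\mathbf A$ gives $\dim Z_{\mathfrak g}(\mathbf A)=1$, so the right-hand side equals $2-\idx\mathbf A$; recall also $\dim O_{\mathbf A}=n^2-1$. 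One direction is then free: if $\mathbf A$ is rigid then $\widetilde O_{\mathbf A}=O_{\mathbf A}$ by definition, hence the two dimensions coincide and $\idx\mathbf A=2$.

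For the converse, assume $\idx\mathbf A=2$, so $\dim\widetilde O_{\mathbf A}=\dim O_{\mathbf A}=n^2-1$; the plan is to prove that $O_{\mathbf A}$ is both open and closed in $\widetilde O_{\mathbf A}$ and that $\widetilde O_{\mathbf A}$ is connected. For openness I would view $\widetilde O_{\mathbf A}$ as the fibre over $0$ of
\[
 s\colon O_{A_0}\times\cdots\times O_{A_k}\longrightarrow\{X\in\mathfrak g\,;\,\trace X=0\},\qquad
 s(C_0,\dots,C_k)=\textstyle\sum_{j=0}^k C_j ,
\]
the target being this $(n^2-1)$-dimensional subspace because $\sum_j\trace A_j=0$. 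At an irreducible point $\mathbf B\in\widetilde O_{\mathbf A}$ the image of the differential $ds_{\mathbf B}$ is $\sum_j\ad_{B_j}(\mathfrak g)$, which by \eqref{eq:ad} together with $Z_{\mathfrak g}(\mathbf B)=\mathbb C I$ is all of $\{X\in\mathfrak g\,;\,\trace X=0\}$; hence $s$ is a submersion along the open irreducible locus of the product, so $\widetilde O_{\mathbf A}$ is smooth of pure dimension $n^2-1$ at every irreducible point, and the smooth irreducible subvariety $O_{\mathbf A}$ of the same dimension lying in that locus must be open in $\widetilde O_{\mathbf A}$. For closedness, $O_{\mathbf A}$ is the orbit of the irreducible tuple $\mathbf A$ under $G$ acting by simultaneous conjugation; an irreducible tuple is a stable point for this action (which factors through $PGL(n,\mathbb C)$) on the affine variety $\bigl(\prod_j\overline{O_{A_j}}\bigr)\cap\{\sum_j C_j=0\}$, and orbits of stable points are closed, so $O_{\mathbf A}$ is closed in $\widetilde O_{\mathbf A}$.

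The hard part will be the last input: that $\widetilde O_{\mathbf A}$ is connected, equivalently that it has no reducible point — only then does ``open, closed, and of full dimension'' yield $O_{\mathbf A}=\widetilde O_{\mathbf A}$. Dimension counting does not suffice, because a reducible $\mathbf B=\mathbf B'\oplus\mathbf B''\in\widetilde O_{\mathbf A}$ has $\dim O_{\mathbf B}<n^2-1$ and might in principle occupy a component of $\widetilde O_{\mathbf A}$ disjoint from $O_{\mathbf A}$. I would eliminate this with Katz's deformation argument: for an irreducible tuple the associated local system on $\mathbb P^1$ has unobstructed deformations with fixed local monodromy (the obstruction space is an $H^2$ of an $\ad$-complex and vanishes by irreducibility) and the space of such deformations has dimension $2-\idx\mathbf A=0$, so $[\mathbf A]$ is an isolated reduced point of the moduli of irreducible tuples with the prescribed conjugacy classes, while a nontrivial decomposition $\mathbf B'\oplus\mathbf B''$ realising those classes would, through the bilinearity of $\idx(\cdot,\cdot)$ and $\Pidx\ge0$ (Definition~\ref{defn:pidx}) applied to the summands, force $\idx\mathbf A\le0$, a contradiction. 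Alternatively, Crawley-Boevey's criterion (Theorem~\ref{thm:CB}) identifies $\idx\mathbf A=2$ with the relevant dimension vector being a real root, and a real root has no decomposition compatible with the conjugacy data. Either way this single step is the only genuine obstacle; the rest is bookkeeping with the identities already established.
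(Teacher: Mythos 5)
The paper itself offers no proof of this statement (it is quoted from Katz \cite{Kz}), so I am judging your argument on its own merits. The identity $\dim\widetilde O_{\mathbf A}-\dim O_{\mathbf A}=2-\idx\mathbf A$ and the direction ``rigid $\Rightarrow\idx\mathbf A=2$'' are fine, as are openness (submersion at irreducible points) and closedness (closed orbit of a simple, hence semisimple, tuple) of $O_{\mathbf A}$ in $\widetilde O_{\mathbf A}$. The gap is the last step, and it is twofold. First, ``$\widetilde O_{\mathbf A}$ connected'' is \emph{not} equivalent to ``$\widetilde O_{\mathbf A}$ has no reducible point'': if every point were irreducible, $\widetilde O_{\mathbf A}$ would be a disjoint union of open $G$-orbits, and nothing in your dimension bookkeeping bounds the number of components by one; the deformation-theoretic remark only reproves openness (isolatedness of $[\mathbf A]$ among irreducible tuples with the given local data), not uniqueness. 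Second, neither suggested argument excluding reducible points works as stated: the spectral types of the sub- and quotient tuples of a reducible $\mathbf B\in\widetilde O_{\mathbf A}$ are not determined by $\spt\mathbf A$ (Jordan blocks may split along the invariant subspace), so ``bilinearity of $\idx$ plus $\Pidx\ge0$'' does not yield $\idx\mathbf A\le0$; and the obstruction is genuinely not combinatorial --- whether a reducible $\mathbf B$ with $B_j\sim A_j$, $\sum B_j=0$ exists depends on resonances among the eigenvalues, precisely the data entering \eqref{eq:CCB}.

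The missing idea is Katz's intertwiner (Euler characteristic) argument, which handles reducible and irreducible $\mathbf B$ uniformly and makes the topology unnecessary. For $\mathbf B\in\widetilde O_{\mathbf A}$ put $\mathcal H:=\{X\in\mathfrak g\,;\,XA_j=B_jX\ (\forall j)\}$, $\mathcal H':=\{Y\in\mathfrak g\,;\,YB_j=A_jY\ (\forall j)\}$, and let $\mathcal I_j$ be the image of $X\mapsto B_jX-XA_j$, so that $\dim\mathcal I_j=n^2-\dim Z_{\mathfrak g}(A_j)$ because $B_j\sim A_j$. The complex $0\to\mathfrak g\to\bigoplus_{j=0}^k\mathcal I_j\to\mathfrak g\to0$, whose second map is $(C_j)\mapsto\sum_jC_j$, has Euler characteristic $2n^2-\sum_j\dim\mathcal I_j=\idx\mathbf A$; its $H^0$ is $\mathcal H$ and its $H^2$ is dual to $\mathcal H'$ via the trace form as in \eqref{eq:ad}. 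Hence $\dim\mathcal H+\dim\mathcal H'\ge\idx\mathbf A=2$, so one of the two spaces contains a nonzero $X$; its kernel (resp.\ image) is $\mathbf A$-invariant, so irreducibility of $\mathbf A$ forces $X$ to be invertible and $\mathbf B\sim\mathbf A$. This single computation replaces your entire open--closed--connected scheme.
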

\section{Middle convolutions}
We will review the additive middle convolutions
in the way interpreted by Dettweiler and Reiter \cite{DR, DR2}.
\begin{dfn}[\cite{DR}]
Fix $\mathbf A=(A_0,\dots,A_k)\in M(n,\mathbb C)^{k+1}_0$.
The {\sl addition} $M_{\mu'}(\mathbf A)\in\mathfrak g^{k+1}$ of 
$\mathbf A$ with respect to $\mu'=(\mu'_1,\dots,\mu'_k)\in\mathbb C^k$ is
$(A_0-\mu'_1-\dots-\mu'_k,A_1+\mu'_1,\dots,A_k+\mu'_k)$.
The {\sl convolution} $(G_0.\dots,G_k)\in M(kn,\mathbb C)^{k+1}_0$ 
of $\mathbf A$ with respect to $\lambda\in\mathbb C$ 
is defined by 
\begin{align}
  G_j &= \Bigl(\delta_{p,j}(A_q+\delta_{p,q}\lambda)\Bigr)_{\substack{1\le p\le k\\ 1\le q\le k}}\qquad(j=1,\dots,k)\\
  &=
  \bordermatrix{
         &    &     &        & \underset{\smallsmile}{j} \cr
      & \cr
    j\,{\text{\tiny$)$}} &A_1 & A_2 & \cdots & A_j+\lambda & A_{j+1} & \cdots & A_k\cr
      & \cr
  },\notag\\
  G_0 &= -(G_1+\dots+G_k).
\end{align}
Put $\mathcal K=\Bigl\{
    \left(\begin{smallmatrix}v_1\\ \vdots\\v_k\end{smallmatrix}\right)
    \,;\,v_j\in\ker A_j
\quad(j=1,\dots,k)\Bigr\}$ and $\mathcal L=\ker G_0$.
Then $\mathcal K$ and $\mathcal L$ are $G_j$-invariant 
subspaces of $\mathbb C^{kn}$
and we define $\bar G_j:=G_j|_{\mathbb C^{kn}/(\mathcal K+\mathcal L)}
\in\End(\mathbb C^{n'})\simeq M(n',\mathbb C)$ with 
$n'=kn-\dim(\mathcal K+\mathcal L)$.  The {\sl middle convolution}
$mc_\lambda(\mathbf A)\in M(n',\mathbb C)^{k+1}_0$ of $\mathbf A$ with 
respect to $\lambda$ is defined by 
$mc_\lambda(\mathbf A):=(\bar G_0,\dots,\bar G_k)$.
Note that $\mathcal K\cap\mathcal L=\{0\}$ if $\lambda\ne0$.
\end{dfn}

The conjugacy classes of $\bar G_j$ in the above definition are given
by \cite{DR2}, which is simply described using the normal form in 
\S\ref{S:M} (cf.~Proposition~\ref{prp:conj}):
\begin{thm}[\cite{DR, DR2}]\label{thm:mc}
Fix $\mathbf A=(A_0,A_1,\dots,A_k)\in M(n,\mathbb C)^{k+1}_0$ and
$
   \mu =(\mu_0,\dots,\mu_k)\in\mathbb C^{k+1}
$
and put
\begin{equation}
 \begin{split}
 mc_{\mu}&:=M_{-\mu'}\circ mc_{|\mu|}\circ M_{-\mu'},\\
 \mu'&:=(\mu_1,\dots,\mu_k),\quad
 |\mu|:=\mu_0+\mu_1+\cdots+\mu_k.
 \end{split}
\end{equation}
Assume the following conditions (which are satisfied if $n>1$ and $\mathbf A$
is irreducible):
\begin{align}
  \bigcap_{\substack{1\le j\le k\\ j\ne i}}\ker(A_j-\mu_j)
  \cap \ker(A_0-\tau)&=\{0\}&(i=1,\dots,k,\ \forall\tau\in\mathbb C)
  \label{eq:star}\\
  \sum_{\substack{1\le j\le k\\ j\ne i}}\IM(A_j-\mu_j)
  +\IM(A_0-\tau)&=\mathbb C^n&(i=1,\dots,k,\ \forall\tau\in\mathbb C)
  \label{eq:starstar}
\end{align}
Then $\mathbf A':=mc_{\mu}(\mathbf A)$ satisfies \eqref{eq:star} and
\eqref{eq:starstar} with replacing $-\mu_j$ by $+\mu_j$ and
\begin{equation}
 \idx \mathbf A'=\idx\mathbf A.
\end{equation}
If $\mathbf A$ is irreducible, so is $\mathbf A'$.  
If $\mu=0$, then $\mathbf A'\sim\mathbf A$.
If $\mathbf A\sim\mathbf B$, then $mc_\mu(\mathbf A)\sim mc_\mu(\mathbf B)$.
Moreover for any $\tau_0\in\mathbb C$ we have
\begin{align}
 mc_{(-\tau_0,\,-\mu')}\circ mc_{(\mu_0,\,\mu')}(\mathbf A)&\sim
 M_{2\mu'}\circ mc_{(2\mu_0-\tau_0-|\mu|,\,\mu')}(\mathbf A),\\
 mc_{-\mu}\circ mc_\mu(\mathbf A)&\sim\mathbf A.
\end{align}
Choose 
$\mathbf m\in\mathcal P^{(n)}_{k+1}$ and 
$\lambda_{j,\nu}\in\mathbb C$
so that
\begin{equation}\label{eq:DS}
 A_j\sim L\bigl(\mathbf m_j;\lambda_j)
 \text{ with }\mathbf m_j:=(m_{j,1},\dots,m_{j,n_j})\text{ and }
 \lambda_j:=(\lambda_{j,1},\dots,\lambda_{j,n_j}).
\end{equation}
Denoting
$I_j:=\{\nu\,;\,\lambda_{j,\nu}=\mu_j\}$ and putting
\begin{align}
 \ell_j=&\begin{cases}
         \min\bigl\{p\in I_j\,;\,m_p=\max\{m_\nu\,;\,\nu\in I_j\}\bigr\}
         &(I_j\ne\emptyset)\\
         n_j+1&(I_j=\emptyset)
       \end{cases},\\
 d_\ell(\mathbf m)&:=
 m_{0,\ell_0}+m_{1,\ell_1}+\cdots+m_{k,\ell_k}-(k-1)n,\label{eq:d_ell}\\
 m_{j,\nu}'&:=
 m_{j,\nu}-\delta_{\ell_j,\nu}\cdot d_\ell(\mathbf m),\label{eq:p(m)}\\
 \lambda_{j,\nu}'&:=
  \begin{cases}
   \lambda_{j,\nu}+|\mu|-2\mu_j &(\nu\ne\ell_j)\\
   -\mu_j                       &(\nu=\ell_j)
  \end{cases},
\end{align}
we have $A_j'\sim L(\mathbf m_j';\lambda'_j)$ \ $(j=0,\dots,k)$
if\/ $|\mu|\ne0$.
\end{thm}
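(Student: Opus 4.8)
The plan is to combine the known facts about how middle convolution acts on the underlying $GL(kn,\mathbb C)$-level data (from \cite{DR, DR2}) with the structure of the normal form $L(\mathbf m;\lambda)$ developed in \S\ref{S:M}. The first step is to reduce the general statement to the case of the pure convolution $mc_{|\mu|}$: since $mc_\mu=M_{-\mu'}\circ mc_{|\mu|}\circ M_{-\mu'}$ and addition $M_{\mu'}$ only shifts the eigenvalues $\lambda_{j,\nu}$ by fixed constants (for $j\ge 1$) and $\lambda_{0,\nu}$ by $-|\mu'|$, the effect of addition on spectral type is transparent and merely bookkeeping. So the heart of the matter is to determine $\spt \bar G_j$ for the convolution, where $\bar G_j = G_j|_{\mathbb C^{kn}/(\mathcal K+\mathcal L)}$, in terms of $\spt A_j$.

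The second step is to analyze each $G_j$ for $1\le j\le k$ directly. On $\mathbb C^{kn}$ the matrix $G_j$ has only the $j$-th block-row nonzero, equal to $(A_1,\dots,A_{j-1},A_j+\lambda,A_{j+1},\dots,A_k)$. From this one reads off the generalized eigenspace decomposition: for $\tau\neq 0$, $G_j-\tau$ is invertible except on the part coming from $\ker(A_j+\lambda-\tau)$ sitting in the $j$-th slot, while $\ker G_j$ and $\IM G_j$ are large. Thus before passing to the quotient, $G_j$ is conjugate to $L(\mathbf m_j;\lambda_j+\lambda)\oplus L(\mathbf 0\text{-padding};0)$ in an appropriate sense — more precisely its nonzero eigenvalues are $\lambda_{j,\nu}+\lambda$ with multiplicities as in $\mathbf m_j$, and the eigenvalue $0$ is heavily degenerate. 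Here I would use Remark~(i)--(iii) after the definition of $L$ to control ranks of powers $\prod(G_j-\tau_\nu)$. The key computation is then to track how quotienting by $\mathcal K+\mathcal L$ changes these ranks: $\mathcal K+\mathcal L$ is a $G_j$-invariant subspace, $\mathcal K$ is built from $\bigoplus_j \ker A_j$ and $\mathcal L=\ker G_0$, and by the nondegeneracy hypotheses \eqref{eq:star}, \eqref{eq:starstar} one has $\dim\mathcal K = \sum_j(n-\rank A_j)$ (adjusting for the $\mu_j$-shift, $\dim\ker(A_j-\mu_j)$) and a dual formula for $\dim\mathcal L$, with $\mathcal K\cap\mathcal L=0$ when $\lambda\neq 0$.

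The third step is the eigenvalue-by-eigenvalue rank count on the quotient. Fix $\tau\in\mathbb C$ and compute $\rank\prod_{\nu}(\bar G_j-\tau_\nu)$ for the relevant chains; by Remark~(i) this pins down $\spt\bar G_j$ provided we know it is of the form $L(\mathbf m_j';\lambda_j')$. The decrement $d_\ell(\mathbf m)$ appears precisely as the drop in the multiplicity of the eigenvalue of $A_j$ at the distinguished index $\ell_j$ (the largest part among those indices $\nu$ with $\lambda_{j,\nu}=\mu_j$), because that eigenvalue, after the shift by $|\mu|-2\mu_j$, collides with $0$ and loses dimension exactly $\dim(\mathcal K+\mathcal L)$ worth; meanwhile all the other eigenvalues $\lambda_{j,\nu}+|\mu|-2\mu_j$ survive untouched with the same partition data, and the new eigenvalue $-\mu_j$ is created. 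I would verify the consistency check $\ord\mathbf m_j' = n' = kn-\dim(\mathcal K+\mathcal L)$ and that $\idx\mathbf A'=\idx\mathbf A$ — the latter should fall out of the formula $\idx(\mathbf m,\mathbf m)$ being invariant under the combinatorial operation $\mathbf m\mapsto\mathbf m'$, which is essentially Crawley-Boevey's reflection; this serves as a strong sanity check on the dimension count.

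The main obstacle I expect is step three: showing rigorously that the passage to $\mathbb C^{kn}/(\mathcal K+\mathcal L)$ decreases the multiplicity of exactly one eigenvalue (the one at $\ell_j$) and by exactly $d_\ell(\mathbf m)$, rather than, say, spreading the loss across several Jordan blocks or changing the Jordan block sizes in an uncontrolled way. This requires knowing not just the dimensions of $\mathcal K$ and $\mathcal L$ but their precise position relative to the generalized eigenspaces of $G_j$ — in particular that $\mathcal K+\mathcal L$ meets the generalized $\tau$-eigenspace of $G_j$ trivially for $\tau\neq 0, \lambda$ and in a controlled way for $\tau=0$. The cleanest route is probably to invoke the explicit description in \cite{DR2} of the conjugacy classes of the $\bar G_j$ and then simply translate their answer into the $L(\mathbf m;\lambda)$-normal-form language via Proposition~\ref{prp:conj} and the rank criterion of Remark~(i); the nontrivial content of the statement as presented here is really this translation plus the identification of the combinatorial recipe $\ell_j$, $d_\ell$, $\mathbf m_j'$, $\lambda_j'$, and checking it is well-defined (independent of the choice among maximal parts, which matters only up to the $S_\infty$-action of Definition~\ref{dfn:Sinfty}).
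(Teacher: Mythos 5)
First, a point of reference: the paper itself contains no proof of Theorem~\ref{thm:mc}. It is imported from \cite{DR} and \cite{DR2}, with the surrounding text saying only that the conjugacy classes of the $\bar G_j$ ``are given by \cite{DR2}'' and are here rewritten in the normal form of \S\ref{S:M}. Your outline follows the same route --- strip off the additions to reduce to the pure convolution $mc_{|\mu|}$, read the spectrum of each $G_j$ off its single nonzero block row, then control the passage to the quotient by $\mathcal K+\mathcal L$ --- and you correctly locate the crux exactly where you stop: one needs not just $\dim(\mathcal K+\mathcal L)$ but its position relative to the generalized eigenspaces of $G_j$, namely that the quotient map is injective on the generalized $\tau$-eigenspace for $\tau$ away from the colliding value (this is where \eqref{eq:star} and \eqref{eq:starstar} do their work, since $\dim\ker(A_j-\mu_j)=m_{j,\ell_j}$ by \eqref{eq:LJordan}) and that the loss at the colliding eigenvalue is concentrated in a single part and equals $d_\ell(\mathbf m)$. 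Since you resolve this by citing \cite{DR2}, the proposal is a translation of the Dettweiler--Reiter computation into the $L(\mathbf m;\lambda)$ language rather than a self-contained argument. That is consistent with what the paper does, but as a proof it does not stand alone at precisely the step you yourself flag as the main obstacle.

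Beyond that, the proposal addresses only the final assertion of the theorem (the formula for $\spt A_j'$) and is silent on the rest: (i) that $\mathbf A'$ again satisfies \eqref{eq:star} and \eqref{eq:starstar} with $-\mu_j$ replaced by $+\mu_j$; (ii) that irreducibility is preserved; (iii) the composition and inversion identities, in particular $mc_{-\mu}\circ mc_\mu(\mathbf A)\sim\mathbf A$, which require an explicit intertwiner between the twice-convolved space and $\mathbb C^n$ and are the substance of Katz's algorithm; and (iv) $\idx\mathbf A'=\idx\mathbf A$, which you offer only as a combinatorial ``sanity check'' via the reflection $\alpha_{\mathbf m}\mapsto\alpha_{\p_\ell(\mathbf m)}$, whereas as stated it is an assertion about $\dim Z_{\mathfrak g}(\bar G_j)$ via \eqref{eq:cent} and so already presupposes the conjugacy-class computation it is meant to check. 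One smaller inaccuracy: your intermediate claim that $G_j$ itself is conjugate to $L(\mathbf m_j;\lambda_j+\lambda)$ plus a zero block is false in general, since when $-\lambda$ is an eigenvalue of $A_j$ one has $\rank G_j^2<\rank G_j$ and hence Jordan blocks of size at least two at the eigenvalue $0$ before quotienting; also $\mathcal K$ is $G_j$-invariant but not contained in $\ker G_j$. Only the quotient $\bar G_j$ has the clean normal form, which is another reason the rank bookkeeping cannot be done block by block upstairs.
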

\begin{exa} 
Suppose $\lambda_i$, $\mu_j$ and $\tau_\ell$ are generic. 
Starting from $\mathbf A=(-\lambda_1-\lambda_2,\lambda_1,\lambda_2)\in M(1,\mathbb C)^3_0$,
we have the following list of eigenvalues of the matrices 
under the application of middle convolutions to $\mathbf A$
(cf.~hypergeometric family in Example~\ref{ex:simpson}):

\qquad
$1,1,1\ (H_1)\ \longleftrightarrow \ 11,11,11
\ (H_2:{}_2F_1)\longleftrightarrow \ 111,111,12\ (H_3:{}_3F_2)$
\if\RIMS
\begin{align*}
 &\begin{Bmatrix}-\lambda_1-\lambda_2&\ \ \ \lambda_1
   &\ \ \ \lambda_2\end{Bmatrix}
 \xrightarrow
  {mc_{\mu_0,\mu_1,\mu_2}}\\
 &\begin{Bmatrix}
    -\lambda_1-\lambda_2-\mu_0+\mu_1+\mu_2&\ \ \ \lambda_1+\mu_0-\mu_1+\mu_2 &\ \ \ \lambda_2+\mu_0+\mu_1-\mu_2\\
    -\mu_0&\ \ \ -\mu_1 &\ \ \ -\mu_2
 \end{Bmatrix}
  \xrightarrow
  {mc_{\tau_0,\tau_1,-\mu_2}}\\
 &\begin{Bmatrix}
    -\lambda_1-\lambda_2-\mu_0+\mu_1-\tau_0+\tau_1&\ \ \ 
    \lambda_1+\mu_0-\mu_1+\tau_0-\tau_1
     &\ \ \ \lambda_2+\mu_0+\mu_1+\tau_0+\tau_1 
  \\
   -\mu_0-\tau_0+\tau_1-\mu_2&\ \ \ -\mu_1+\tau_0-\tau_1-\mu_2&\ \ \ \mu_2\\
   -\tau_0&\ \ \ -\tau_1 &\ \ \ \mu_2
 \end{Bmatrix}
\end{align*}
\else
\begin{align*}
 &\begin{Bmatrix}-\lambda_1-\lambda_2&\lambda_1
   &\lambda_2\end{Bmatrix}
 \xrightarrow
  {mc_{\mu_0,\mu_1,\mu_2}}\\
 &\begin{Bmatrix}
    -\lambda_1-\lambda_2-\mu_0+\mu_1+\mu_2&\lambda_1+\mu_0-\mu_1+\mu_2 &\ \lambda_2+\mu_0+\mu_1-\mu_2\\
    -\mu_0&-\mu_1 &-\mu_2
 \end{Bmatrix}
  \xrightarrow
  {mc_{\tau_0,\tau_1,-\mu_2}}\\
 &\begin{Bmatrix}
    -\lambda_1-\lambda_2-\mu_0+\mu_1-\tau_0+\tau_1&
    \lambda_1+\mu_0-\mu_1+\tau_0-\tau_1
     &\lambda_2+\mu_0+\mu_1+\tau_0+\tau_1 
  \\
   -\mu_0-\tau_0+\tau_1-\mu_2&-\mu_1+\tau_0-\tau_1-\mu_2&\mu_2\\
   -\tau_0&\ \ \ -\tau_1 &\mu_2
 \end{Bmatrix}
\end{align*}
\fi
Here 
the eigenvalues are vertically written.
Note that the matrices are semisimple if the parameters are generic.
Denoting $\mathbf A'=(A'_0, A'_1, A'_2)=mc_{\mu_0,\mu_1,\mu_2}(\mathbf A)$ and $\mathbf A''=(A''_0,A''_1,A''_2)=mc_{\tau_0,\tau_1,-\mu_2}(\mathbf A')$, we have
\begin{align}
 \begin{split}
 A'_0&\sim L(1,1;-\lambda_1-\lambda_2-\mu_0+\mu_1+\mu_2,-\mu_0),
\\ 
 A'_j&\sim L(1,1;\lambda_j+\mu_0+\mu_1+\mu_2-2\mu_j,-\mu_j)\quad(j=1,\,2),
 \end{split} \label{eq:H21}\\ 
 A''_2&\sim L(1,2;\lambda_2+\mu_0+\mu_1+\tau_0+\tau_1,\mu_2)\text{, etc}.
\end{align}
Then Theorem~\ref{thm:mc} implies that the irreducible rigid tuple
$\mathbf A=(A'_0,A'_1,A'_2)\in M(2,\mathbb C)^3_0$ satisfying
\eqref{eq:H21} exists if and only if
$\lambda_1\ne\mu_1$, $\lambda_2\ne\mu_2$, $\lambda_1+\lambda_2+\mu_0\ne0$
and $\mu_0+\mu_1+\mu_2\ne 0$, which corresponds to 
$\mathcal K=\mathcal L=\{0\}$ and $|\mu|\ne0$.
Moreover all the irreducible rigid tuples $\mathbf A\in M(2,\mathbb C)^3_0$ are
obtained in this way.
\end{exa}
\begin{dfn}\label{dfn:rigid}
{\rm i) }
Under the notation in Theorem~\ref{thm:mc} the tuple of
partitions $\mathbf m\in\mathcal P^{(n)}_{k+1}$ is called 
the \textsl{spectral type} of $\mathbf A$
and denoted by $\spt\mathbf A$.

{\rm ii) }
Let $\mathbf m\in\mathcal P^{(n)}_{k+1}$
and  $\lambda_{j,\nu}$ be generic complex numbers satisfying
\begin{equation}\label{eq:sum0}
  \sum_{j=0}^k\sum_{\nu=1}^{n_j}m_{j,\nu}\lambda_{j,\nu}=0.
\end{equation}
Then $\mathbf m$ is \textsl{realizable} if there exists a tuple 
$\mathbf A\in M(n,\mathbb C)^{k+1}_0$ satisfying \eqref{eq:DS}.
Moreover $\mathbf m$ is \textsl{irreducibly realizable}
if there exists an irreducible tuple $\mathbf A\in M(n,\mathbb C)^{k+1}_0$
satisfying \eqref{eq:DS}.
An irreducibly realizable tuple $\mathbf m$ is \textsl{rigid} if 
$\idx \mathbf m:=\idx(\mathbf m,\mathbf m)=2$, namely, the corresponding
irreducible tuple $\mathbf A$ is rigid.

For 
$\ell=(\ell_0,\dots,\ell_k)\in\mathbb Z^{k+1}_{\ge 1}$ 
we define 
$\p_{\mathbf\ell}(\mathbf m)=\mathbf m'$ by
\eqref{eq:d_ell} and \eqref{eq:p(m)}
and denote the unique monotone element in 
$S_\infty'\mathbf m$ by $s(\mathbf m)$.
Moreover we define
\begin{align}
 \p(\mathbf m)&:=\p_{(1,1,\ldots)}(\mathbf m)=\p_{\bf 1}(\mathbf m),
  \label{eq:p}\\
 \p_{max}(\mathbf m)&:=\p_\ell(\mathbf m)\text{ \ with \ }
  \ell_j=\min\bigl\{\nu\,;\,m_{j,\nu}=\max\{m_{j,1},m_{j,2},\ldots\}\bigr\}
  \label{eq:pmax}
\end{align}
and $\mathbf m$ is \textit{basic} if $\mathbf m$ is indivisible and
$\sum_{j=0}^k\max\{m_{j,1},m_{j,2},\ldots\}\le (k-1)\ord\mathbf m$
which means $\ord\p_{max}(\mathbf m)\ge\ord\mathbf m$.
Under the notation \eqref{eq:pmax} and \eqref{eq:DS} we put
\begin{equation}
 mc_{max}(\mathbf A):=mc_{\lambda_{\ell_0},\lambda_{\ell_1},\ldots}(\mathbf A).
\end{equation}
\end{dfn}
\begin{rem} 
%
{\rm i)\ }
Suppose $\mathbf m\in\mathcal P_{k+1}$ is irreducibly realizable.
Then $mc_\ell(\mathbf m)\in\mathcal P_{k+1}$ if
$\#\{(j,\nu)\,;\,m_{j,\nu}>0\text{ and }\nu\ne\ell_j\}>1$.
Moreover if $\mathbf A$ is a generic element of $M(n,\mathbb C)^{k+1}_0$
satisfying $\spt\mathbf A=\mathbf m$ and moreover
$\mu=(\mu_0,\dots,\mu_k)\in\mathbb C^{k+1}$ is generic under the condition that
$\mu_j=\lambda_{j,\ell_j}$ for any $\ell_j$ satisfying $m_{j,\ell_j}>0$,
then $mc_\mu(\mathbf A)$ is a generic element of $M(n,\mathbb C)^{k+1}_0$
with the spectral type $\p_\ell(\mathbf m)$.

\smallskip
{\rm ii)\ }
Let $\mathbf A\in M(n,\mathbb C)^{k+1}_0$ with a spectral type $\mathbf m$.
Let $\ell=(\ell_0,\ell_1,\dots)$ with $\ell_j\in\mathbb Z_{> 0}$ and
$\ell_\nu=1$ for $\nu>k$.
Define $\mathbf 1_\ell=(m'_{j,\nu})\in\mathcal P^{(1)}$ by
$m'_{j,\nu}=\delta_{\ell_j,\nu}$.  Then
\begin{align}
 \idx \mathbf A&=\idx\mathbf m:=\idx(\mathbf m,\mathbf m),\\
  d_\ell(\mathbf m)&=\idx(\mathbf m,\mathbf 1_\ell).
\end{align}
\end{rem}
\begin{thm}\label{thm:IrReal}
{\rm i)\ \ (\cite{Kz}, \cite{DR})\ }
Let $\mathbf A\in M(n,\mathbb C)^{k+1}_0$ and put $\mathbf m=\spt\mathbf A$.
Then $\mathbf A$ is irreducible and rigid if 
and only if $n=1$ or $mc_{max}(\mathbf A)$ is irreducible and rigid
and $\ord\p_{max}(\mathbf m)<n$.
Hence if $\mathbf A$ is irreducible and rigid, $\mathbf A$ is
constructed from an element of $M(1,\mathbb C)^{k+1}_0$ by a finite
iteration of suitable middle convolutions $mc_\mu$ in Theorem~\ref{thm:mc}.

\smallskip
{\rm ii)\ \ (\cite{Ko4}, \cite{CB})\ }
An indivisible tuple $\mathbf m\in\mathcal P$ is irreducibly realizable 
if and only if one of the following three conditions holds.
\begin{align}
&\ord\mathbf m=1\\
&\mathbf m\text{ is \textsl{basic}, namely, $\mathbf m$ is indivisible and }
\ord \p_{max}(\mathbf m)\ge \ord\mathbf m\label{eq:basic}\\
&\p_{max}(\mathbf m)\in\mathcal P\text{ is well-defined and irreducibly 
realizable}.
\end{align}
Note that $\p_\ell(\mathbf m)\in\mathcal P$ is well-defined if and only if
$m_{j,\ell_j}\ge d_\ell(\mathbf m)$ for $j=0,1,\ldots$.

\smallskip
{\rm iii)}\  {\rm(Theorem~\ref{thm:GDS} in \S\ref{sec:apd})}
Suppose a tuple $\mathbf m\in \mathcal P$ is not indivisible.
Put $\mathbf m=d\overline{\mathbf m}$ with an integer $d>1$ 
and an indivisible tuple $\overline{\mathbf m}\in\mathcal P$.
Then $\mathbf m$ is irreducibly realizable if and only if
$\overline{\mathbf m}$ is irreducibly realizable and $\idx\mathbf m<0$.
\end{thm}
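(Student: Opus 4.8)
The plan is to establish this statement (restated as Theorem~\ref{thm:GDS} in \S\ref{sec:apd}) by reducing it to Crawley-Boevey's solution \cite{CB} of the Deligne--Simpson problem through the associated Kac--Moody root system, and then observing that the division $\mathbf m=d\overline{\mathbf m}$ makes Crawley-Boevey's combinatorial condition collapse to an elementary inequality on $\idx$. Recall the dictionary: to $\mathbf m\in\mathcal P_{k+1}^{(n)}$ one attaches the dimension vector $\alpha=\alpha_{\mathbf m}$ of the star-shaped quiver with central vertex of dimension $n=\ord\mathbf m$ and arms recording the $\mathbf m_j$, so that $\idx(\mathbf m,\mathbf m')$ is the symmetrized Tits form $(\alpha_{\mathbf m},\alpha_{\mathbf m'})$; in particular $\idx(d\mathbf m,d\mathbf m')=d^2\idx(\mathbf m,\mathbf m')$, and writing $p(\mathbf m):=1-\tfrac12\idx\mathbf m$ (which is $\Pidx\mathbf A$ for an irreducible $\mathbf A$ realizing $\mathbf m$) one has $p(\alpha_{\mathbf m})=p(\mathbf m)$. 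Since the $\lambda_{j,\nu}$ are generic subject only to \eqref{eq:sum0}, a root $\beta$ of the quiver annihilates $\sum\beta_{j,\nu}\lambda_{j,\nu}$ exactly when $\beta\in\mathbb Z\overline\alpha$, so Crawley-Boevey's criterion specializes here to: $\mathbf m$ is irreducibly realizable iff $\alpha_{\mathbf m}$ is a positive root and $p(\alpha_{\mathbf m})>\sum_{i=1}^r p(c_i\overline\alpha)$ for every expression $\alpha_{\mathbf m}=\sum_{i=1}^r c_i\overline\alpha$ with $r\ge2$, each $c_i\ge1$ and each $c_i\overline\alpha$ a (necessarily imaginary) positive root.

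Next I would record the arithmetic. Put $q:=\idx\overline{\mathbf m}=(\overline\alpha,\overline\alpha)$. For a composition $d=c_1+\dots+c_r$ with $r\ge2$, using $\sum_ic_i^2-d^2=-2\sum_{i<i'}c_ic_{i'}$,
\[
  p(d\overline\alpha)-\sum_{i=1}^r p(c_i\overline\alpha)
   \;=\; -\,q\!\!\sum_{1\le i<i'\le r}\!\! c_ic_{i'}\;-\;(r-1).
\]
If $q\le-2$ this is $\ge2\binom r2-(r-1)=(r-1)^2>0$, so no such decomposition violates the criterion; if $q=0$ it equals $-(r-1)<0$, so taking $r=d$, $c_i=1$ the criterion fails. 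Since $\overline\alpha$ imaginary forces $q\le0$, we conclude: for $d\ge2$ the specialized criterion holds for $\alpha_{\mathbf m}=d\overline\alpha$ iff $\overline\alpha$ is a positive imaginary root with $q<0$. For $d=1$ there is no nontrivial decomposition into multiples of $\overline\alpha$ to check, so the criterion reads simply ``$\overline\alpha$ is a positive root''; thus $\overline{\mathbf m}$ is irreducibly realizable iff $\overline\alpha$ is a positive root (equivalently, by Theorem~\ref{thm:IrReal}(ii), iff the $\p_{max}$-reduction of $\overline{\mathbf m}$ terminates at order $1$ or at a basic tuple — and note $\p_\ell(d\overline{\mathbf m})=d\,\p_\ell(\overline{\mathbf m})$, so the reduction is compatible with the scaling). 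I also need ``$d\overline\alpha$ ($d\ge2$) a root $\Rightarrow$ $\overline\alpha$ a root'': such $d\overline\alpha$ cannot be real (real roots are indivisible, by $(\beta,\beta)=2$ on the even form $\idx$), so it is imaginary; applying a suitable $w\in W$, $\pm w(d\overline\alpha)$ lies in the fundamental region $K$, and since $K$ is cut out by homogeneous conditions and $w$ preserves the root lattice, $\pm w(\overline\alpha)=\pm\tfrac1d w(d\overline\alpha)$ is an integral vector in $K$, hence an imaginary root; so $\overline\alpha$ is a positive root.

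Assembling: for ``only if'', if $\mathbf m=d\overline{\mathbf m}$ is irreducibly realizable then by the Corollary following Definition~\ref{defn:pidx} (with $\dim Z_{\mathfrak g}(\mathbf A)=1$) we get $\idx\mathbf m\le2$; since $\idx\mathbf m=d^2\,\idx\overline{\mathbf m}$ with $d\ge2$ this forces $\idx\overline{\mathbf m}\le0$, and $\idx\overline{\mathbf m}=0$ is excluded by the $q=0$ line above, so $\idx\mathbf m<0$; moreover $\alpha_{\mathbf m}=d\overline\alpha$ is a positive root, hence $\overline\alpha$ is a positive root and $\overline{\mathbf m}$ is irreducibly realizable. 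For ``if'', $\overline{\mathbf m}$ irreducibly realizable makes $\overline\alpha$ a positive root, and $\idx\mathbf m<0$ gives $q=d^{-2}\idx\mathbf m<0$, so $\overline\alpha$ is a positive imaginary root with $q<0$; by the displayed computation the specialized criterion then holds for $d\overline\alpha$, i.e.\ $\mathbf m$ is irreducibly realizable.

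The genuine obstacle is not this bookkeeping but the input from Crawley-Boevey: the equivalence between irreducible realizability and the root-theoretic criterion, and especially the \emph{construction}, in the anisotropic case $q<0$, of an actual irreducible tuple in $M(dn,\mathbb C)^{k+1}_0$ with the prescribed conjugacy classes — this is where the content behind Theorem~\ref{thm:GDS} (deformed preprojective algebras, or an inductive middle-convolution descent to a basic $\overline{\mathbf m}$) is needed, while everything else is manipulation of the function $p$ together with Theorems~\ref{thm:mc} and \ref{thm:IrReal}. A secondary point requiring care is the reduction ``generic $\lambda$ on $\sum m_{j,\nu}\lambda_{j,\nu}=0$ $\Rightarrow$ the only roots $\beta$ with $\sum\beta_{j,\nu}\lambda_{j,\nu}=0$ are the $c\overline\alpha$'', which must be checked against the precise normalization of \eqref{eq:sum0} and \eqref{eq:DS}.
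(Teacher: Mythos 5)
Your argument for part (iii) is correct and is essentially the paper's own proof of Theorem~\ref{thm:GDS}: both reduce to Crawley--Boevey's criterion (Theorem~\ref{thm:CB}), use genericity of $\lambda$ under \eqref{eq:sum0} to force any offending decomposition to have the form $\mathbf m=\sum d_i\overline{\mathbf m}$, and then evaluate $\Pidx\mathbf m-\sum\Pidx(d_i\overline{\mathbf m})=-\idx\overline{\mathbf m}\sum_{i<i'}d_id_{i'}-(N-1)$, splitting into the cases $\idx\overline{\mathbf m}=0$ (criterion fails) and $\idx\overline{\mathbf m}<0$ (criterion holds). Your added checks (that $d\overline\alpha$ a root forces $\overline\alpha$ to be a positive imaginary root, and the explicit lower bound $(r-1)^2$) are details the paper leaves implicit but do not change the route.
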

\begin{exa}
Successive applications of $s\circ\p$ to monotone elements
of $\mathcal P$:
\iffalse
\begin{align*}
&\underline411,\underline411,\underline42,\underline33
\overset{15-2\cdot6=3}\longrightarrow
111,111,21,3=\underline111,\underline111,\underline21
\overset{4-3=1}\longrightarrow{\underline1}1,{\underline1}1,{\underline1}1
\overset{3-2=1}\longrightarrow1,1,1 &(\text{rigid})
\\
&\underline211,\underline211,\underline1111\overset{5-4=1}\longrightarrow
\underline111,\underline111,\underline111\overset{3-3=0}\longrightarrow 111,111,111& (\text{realizable, not rigid})
\\
&\underline211,\underline211,\underline211,\underline31
\overset{9-8=1}\longrightarrow
\underline111,\underline111,\underline111,\underline21
\overset{5-6=-1}\longrightarrow& (\text{realizable, not rigid})
\\
&{\underline2}2,{\underline2}2,{\underline1}111\overset{5-4=1}\longrightarrow
{\underline2}1,{\underline2}1,{\underline1}11
\overset{5-3=2}\longrightarrow\times&(\text{not realizable})
\end{align*}
\else

\noindent
$\underline411,\underline411,\underline42,\underline33
\overset{15-2\cdot6=3}\longrightarrow
111,111,21,3=\underline111,\underline111,\underline21
\overset{4-3=1}\longrightarrow{\underline1}1,{\underline1}1,{\underline1}1
\overset{3-2=1}\longrightarrow1,1,1$\hfill(rigid)

\noindent
$\underline211,\underline211,\underline1111\overset{5-4=1}\longrightarrow
\underline111,\underline111,\underline111\overset{3-3=0}\longrightarrow 111,111,111$\hfill(realizable, not rigid)

\noindent
\if\RIMS
$\underline211,\underline211,\underline211,\underline31
\overset{9-8=1}\longrightarrow
\underline111,\underline111,\underline111,\underline21
\overset{5-6=-1}\longrightarrow211,211,211,31$\hfill(realizable, not rigid)
\else
$\underline211,\underline211,\underline211,\underline31
\!\overset{9-8=1}\longrightarrow\!
\underline111,\underline111,\underline111,\underline21
\!\overset{5-6=-1}\longrightarrow\!211,211,211,31$\ (realizable, not rigid)
\fi

\noindent
${\underline2}2,{\underline2}2,{\underline1}111\overset{5-4=1}\longrightarrow
{\underline2}1,{\underline2}1,{\underline1}11
\overset{5-3=2}\longrightarrow\times$\hfill(not realizable)\\[2pt]
The numbers on the above arrows are $d_{(1,1,\dots)}(\mathbf m)
=m_{0,1}+\cdots+m_{k,1}-(k-1)\cdot\ord\mathbf m$.
\fi
\end{exa}

\section{Rigid tuples}
Let $\mathcal R_k^{(n)}$ denote the totality of rigid tuples in 
$\mathcal P_k^{(n)}$ (cf.~Definition~\ref{dfn:rigid}).  
Put $\mathcal R_k=\bigcup_{n=1}^\infty \mathcal R_k^{(n)}$, 
$\mathcal R^{(n)}=\bigcup_{k=1}^\infty \mathcal R_k^{(n)}$
and $\mathcal R=\bigcup_{n=1}^\infty \mathcal R_k$.
We identify elements of $\mathcal R$ if they are in the 
same $S_\infty$-orbit (cf.~Definition~\ref{dfn:Sinfty}) and then 
$\bar{\mathcal R}$ denotes the set of elements of $\mathcal R$ 
under this identification.
Similarly we denote ${\bar{\mathcal R}}_k$ and ${\bar{\mathcal R}}^{(n)}$ for 
$\mathcal R_k$ and $\mathcal R^{(n)}$, respectively,  with this
identification.
\begin{exa}\label{ex:simpson} {\rm i)\  }
The list of $\mathbf m\in\bar{\mathcal R}^{(n)}$ with 
$\mathbf m_0=1^n$ is given by Simpson \cite{Si}:
\begin{align*}
 &1^n,1^n,n-11\text{\ ($H_n$:\,hypergeometric family)}
 &&1^{2m},mm,mm-11\text{\ ($EO_{2m}$:\,even family)}\\
 &1^{2m+1},m+1m,mm1\text{\ ($EO_{2m+1}$:\,odd family)}
 &&111111,222,42\text{\ ($X_6$:\,extra case)}
\end{align*}

{\rm ii)\ }
We show examples and the numbers of elements of 
${\bar{\mathcal R}}^{(n)}$.

\medskip
\centerline{{\bf Table} \ ${\bar{\mathcal R}}^{(n)}$ ($2\le n\le 7$)}
\if\RIMS
\vspace{-10pt}
\else
\vspace{-4pt}
\fi
{\small\begin{verbatim}
2:11,11,11                3:111,111,21               3:21,21,21,21      
4:1111,1111,31            4:1111,211,22              4:211,211,211
4:211,22,31,31            4:22,22,22,31              4:31,31,31,31,31
5:11111,11111,41          5:11111,221,32             5:2111,2111,32
5:2111,221,311            5:221,221,221              5:221,221,41,41
5:221,32,32,41            5:311,311,32,41            5:32,32,32,32
5:32,32,41,41,41          5:41,41,41,41,41,41        6:111111,111111,51
6:111111,222,42           6:111111,321,33            6:21111,2211,42
6:21111,222,33            6:21111,222,411            6:21111,3111,33
6:2211,2211,33            6:2211,2211,411            6:2211,222,51,51
6:2211,321,321            6:2211,33,42,51            6:222,222,321
6:222,3111,321            6:222,33,33,51             6:222,33,411,51
6:3111,3111,321           6:3111,33,411,51           6:321,321,42,51
6:321,33,51,51,51         6:321,42,42,42             6:33,33,33,42
6:33,33,411,42            6:33,411,411,42            6:33,42,42,51,51
6:411,411,411,42          6:411,42,42,51,51          6:51,51,51,51,51,51,51
7:1111111,1111111,61      7:1111111,331,43           7:211111,2221,52
7:211111,322,43           7:22111,22111,52           7:22111,2221,511
7:22111,3211,43           7:22111,331,421            7:2221,2221,43
7:2221,2221,61,61         7:2221,31111,43            7:2221,322,421
7:2221,331,331            7:2221,331,4111            7:2221,43,43,61
7:31111,31111,43          7:31111,322,421            7:31111,331,4111
7:3211,3211,421           7:3211,322,331             7:3211,322,4111
7:3211,331,52,61          7:322,322,322              7:322,322,52,61
7:322,331,511,61          7:322,421,43,61            7:322,43,52,52
7:331,331,43,61           7:331,331,61,61,61         7:331,43,511,52
7:4111,4111,43,61         7:4111,43,511,52           7:421,421,421,61
7:421,421,52,52           7:421,43,43,52             7:421,43,511,511
7:421,43,52,61,61         7:43,43,43,43              7:43,43,43,61,61
7:43,43,61,61,61,61       7:43,52,52,52,61           7:511,511,52,52,61
7:52,52,52,61,61,61       7:61,61,61,61,61,61,61,61
\end{verbatim}%
}

\qquad\qquad\qquad
$\mathcal R_{k}^{(n)}$: rigid $k$-tuples of partitions with order 
$n\phantom{\displaystyle\frac{A_B}{C_D}}$\\
\smallskip
\nopagebreak
\centerline{
\begin{tabular}{|r|r|r||r|r|r||r|r|r|}\hline
{\small ord} & ${\#\bar{\mathcal R}}_3^{(n)}\!\!$
 & ${\#\bar{\mathcal R}}^{(n)}\!\!$ &
{\small ord} & ${\#\bar{\mathcal R}}_3^{(n)}$ & $\#\bar{\mathcal R}^{(n)}$ &
{\small ord} & ${\#\bar{\mathcal R}}_3^{(n)}$ & $\#\bar{\mathcal R}^{(n)}$ 
 \\ \hline
2 & 1 & 1 & 15&1481&2841 &28&114600&190465\\ \hline
3 & 1 & 2 & 16&2388&4644 &29&143075&230110\\ \hline
4 & 3 & 6 & 17&3276&6128 &30&190766&310804\\ \hline
5 & 5 & 11& 18&5186&9790 &31&235543&371773\\ \hline
6 & 13& 28& 19&6954&12595&32&309156&493620\\ \hline
7 & 20& 44& 20&10517&19269&33&378063&588359\\ \hline
8 & 45& 96& 21&14040&24748&34&487081&763126\\ \hline
9 & 74&157& 22&20210&36078&35&591733&903597\\ \hline
10&142&306& 23&26432&45391&36&756752&1170966\\ \hline
11&212&441& 24&37815&65814&37&907150&1365027\\ \hline
12&421&857& 25&48103&80690&38&1143180&1734857\\ \hline
13&588&1177& 26&66409&112636&39&1365511&2031018\\ \hline
14&1004&2032& 27&84644&139350&40&1704287&2554015\\ \hline
\end{tabular}}
\end{exa}
\section{A Kac-Moody root system}
We will review the relation between a Kac-Moody root system and the middle 
convolution which is clarified by [CB].

Let $\mathfrak h$ be an infinite dimensional real vector space with 
the set of basis $\Pi$, where
\begin{equation}
 \Pi=\{\alpha_0,\alpha_{j,\nu}\,;\,j=0,1,2,\ldots,\ 
 \nu=1,2,\ldots\}.
\end{equation}
Put
\begin{equation}
 Q  :=\sum_{\alpha\in\Pi}\mathbb Z\alpha\ \supset\ 
 Q_+:=\sum_{\alpha\in\Pi}\mathbb Z_{\ge0}\alpha.
\end{equation}
We define an indefinite inner product on $\mathfrak h$ by
\begin{equation}
 \begin{split}
 (\alpha|\alpha)
 &= 2\qquad\ \,(\alpha\in\Pi),\\
 (\alpha_0|\alpha_{j,\nu})
 &=-\delta_{\nu,1}\quad\!(j=0,1,\ldots,\ \nu=1,2,\ldots),\\
 (\alpha_{i,\mu}|\alpha_{j,\nu})
 &=\begin{cases}
    0 &(i\ne j\text{ \ or \ }|\mu-\nu|>1)\\
    -1&(i=j\text{ \ and \ }|\mu-\nu|=1)
  \end{cases}.
 \end{split}
\end{equation}
Let $\mathfrak g_\infty$ denote the Kac-Moody Lie
algebra associated to the Cartan matrix
\begin{align}
  A&:=\left(\frac{2(\alpha_i|\alpha_j)}{(\alpha_i|\alpha_i)}
    \right)_{i,j\in I},\\
  I&:=\{0,\,(j,\nu)\,;\,j=0,1,\ldots,\ \nu=1,2,\ldots\}.
\end{align}
We introduce linearly independent vectors $e_0$ and 
$e_{j,\nu}$ ($j=0,1,\ldots,\ \nu=1,2,\ldots$) with
\begin{equation}
 (e_0|e_0) = 2,\ 
 (e_0|e_{j,\nu}) = -\delta_{\nu,1}\text{ \ and \ }
 (e_{j,\nu}|e_{j',\nu'}) = \delta_{j,j'}\delta_{\nu,\nu'}.
\end{equation}

For a sufficiently large positive integer $k$ let ${\mathfrak h}^k$ be
a subspace of $\mathfrak h$ spanned by $\{\alpha_0,\,\alpha_{j,\nu}\,;\,
j=0,1,\dots,k,\ \nu=0,1,\ldots\}$.
Putting $e_0^k=e_0+e_{0,1}+\cdots+e_{k,1}$, we have
$(e_0^k|e_0^k) = 2+(k+1) - 2(k+1)=1-k$.
For a sufficiently large $k$ we have an orthogonal basis
$\{e_0^k,\ e_{j,\nu}\,;\,j=0,\dots,k,\ \nu=1,2,\ldots\}$
with
\begin{align}
 \begin{split}
  (e_0^k|e_0^k) &= 1-k,\quad
  (e_{j,\nu}|e_{j',\nu'}) = \delta_{j,j'}\delta_{\nu,\nu'},\\
  (e_0^k|e_{j,\nu}) &= 0
  \qquad(j=0,\dots,k,\ \nu=1,2,\ldots)
 \end{split}
\intertext{and therefore we may put}
 \begin{split}
  \alpha_0 &= e_0 = e_0^k - e_{0,1}-e_{1,1}-\dots-e_{k,1},\\
  \alpha_{j,\nu} &= e_{j,\nu} - e_{j,\nu+1}
  \qquad(j=0,\dots,k,\ \nu=1,2,\ldots).
 \end{split}
\end{align}
The element
\begin{equation}\label{eq:norm}
 \alpha_0(\ell_0,\dots,\ell_k)
 :=e_0^k - \sum_{j=0}^k\sum_{\nu=1}^{\ell_j+1}\frac{e_{j,\nu}}{\ell_j+1}
\end{equation}
is in the space spanned by $\alpha_0$ and $\alpha_{j,\nu}$ $(j=0,\dots,k,\,
\nu=1,\dots,\ell_j)$ and it is orthogonal to any $\alpha_{j,\nu}$ for 
$\nu=1,\dots,\ell_j$ and $j=0,\dots,k$. 

\begin{rem}\label{rem:norm}
We may assume $\ell_0\ge\ell_1\ge\cdots\ge\ell_k\ge 1$.
It is easy to have
\begin{align*}
  &\bigl(\alpha_0(\ell_0,\dots,\ell_k)|\alpha_0(\ell_0,\dots,\ell_k)\bigr)
 = 1 - k + \sum_{j=0}^k\frac1{\ell_j+1}\notag\\
 &\quad\begin{cases}
   >0 &(k=1)\\
   >0 &(k=2:\ell_1=\ell_2=1\text{ or } (\ell_0,\ell_1,\ell_2)=
    (2,2,1),\,(3,2,1)\text{ or }(4,2,1))\\
   =0 &(k=2:(\ell_0,\ell_1,\ell_2)=
    (2,2,2),\,(3,3,1)\text{ or }(5,2,1))\\
   <0 &(k=2:\ell_1\ge 2\text{ and }\
       \ell_0+2\ell_1+3\ell_2>12)\\
   = 0 &(k=3:\ell_0=\ell_1=\ell_2=\ell_3=1)\\
   < 0 &(k=3:\ell_0>1)\\
   < 0 &(k\ge 4)
  \end{cases}
\end{align*}
\end{rem}
The Weyl group $W_{\!\infty}$ of $\mathfrak g_\infty$ is the subgroup of 
$O(\mathfrak h)\subset GL(\mathfrak h)$ 
generated by the simple reflections
\begin{equation}\label{eq:Kzri}
 r_i(x) := x - 2\frac{(x|\alpha_i)}{(\alpha_i|\alpha_i)}\alpha_i
         = x - (x|\alpha_i)\alpha_i
 \qquad(x\in \mathfrak h,\ i\in I).
\end{equation}
The subgroup of $W_{\!\infty}$ generated by $r_i$ for $i\in I\setminus\{0\}$
is denoted by $W'_{\!\infty}$.
Putting $\sigma(\alpha_0)=\alpha_0$ and $\sigma(\alpha_{j,\nu})=
\alpha_{\sigma(j),\nu}$ for $\sigma\in\mathfrak S_\infty$, we define
a subgroup of $O(\mathfrak h)$:
\begin{equation}\label{eq:WS}
  \widetilde W_{\!\infty}:=\mathfrak S_\infty\ltimes W_{\!\infty}.
\end{equation}

For a tuple of partitions $\mathbf m
 =\bigl(m_{j,\nu}\bigr)_{j\ge 0,\ \nu\ge 1}
\in\mathcal P^{(n)}_{k+1}$ of $n$, we define
\begin{equation}\label{eq:Kazpart}
 \begin{split}
  n_{j,\nu}&:=m_{j,\nu+1}+m_{j,\nu+2}+\cdots,\\
  \alpha_{\mathbf m}&:=n\alpha_0
   + \sum_{j=0}^\infty\sum_{\nu=1}^\infty n_{j,\nu}\alpha_{j,\nu}
  = ne_0^k-\sum_{j=0}^\infty\sum_{\nu=1}^\infty m_{j,\nu}e_{j,\nu}
 \in Q_+.
 \end{split}
\end{equation}
\begin{prp}\label{prop:Kac}
{\rm i)} \ 
\hfill$\idx(\mathbf m,\mathbf m')=
 (\alpha_{\mathbf m}|\alpha_{\mathbf m'})$.\hfill\phantom{ABCDEFG}

\smallskip
{\rm ii)} \ 
Given $i\in I$, we have
$\alpha_{\mathbf m'} = r_i(\alpha_\mathbf m)$ with
\begin{equation*}
 \mathbf m'=
 \begin{cases}
   \p\mathbf m&(i=0),\\
   (m_{0,1}\dots,
   \overset{\underset{\smallsmile}1}m_{j,1}\dots
   \overset{\underset{\smallsmile}\nu}{m_{j,\nu+1}}
   \overset{\underset{\smallsmile}{\nu+1}}{m_{j,\nu}}\dots,\dots)
   &\bigl(i=(j,\nu)\bigr).
 \end{cases}
\end{equation*}
Moreover for $\ell=(\ell_0,\ell_1,\ldots)\in\mathbb Z_{>0}^\infty$
satisfying $\ell_\nu=1$ for $\nu\gg1$ we have
\begin{align}
 \alpha_\ell:=\alpha_{\mathbf 1_\ell} &=\alpha_0
     +\sum_{j=0}^\infty\sum_{\nu=1}^{\ell_j-1}\alpha_{j,\nu}
 =\biggl(\prod_{j\ge 0}
   r_{j,\ell_j-1}\cdots r_{j,2}r_{j,1}\biggr)(\alpha_0),\label{eq:alpl}
 \allowdisplaybreaks\\
 \alpha_{\p_\ell(\mathbf m)} &=
   \alpha_{\mathbf m} 
  - 2\frac{(\alpha_{\mathbf m}|\alpha_\ell)}
     {(\alpha_\ell|\alpha_\ell)}\alpha_\ell
  = \alpha_{\mathbf m}-(\alpha_m|\alpha_\ell)\alpha_\ell.\label{eq:M2K}
\end{align}
\end{prp}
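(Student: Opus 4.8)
The plan is to verify all three identities in Proposition~\ref{prop:Kac} by direct computation using the expansion $\alpha_{\mathbf m}=ne_0^k-\sum_{j,\nu}m_{j,\nu}e_{j,\nu}$ in the orthogonal basis, together with the combinatorial definitions \eqref{eq:d_ell}--\eqref{eq:p(m)} of $\p_\ell$.

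For part (i), I would expand both $\alpha_{\mathbf m}$ and $\alpha_{\mathbf m'}$ in the orthogonal basis $\{e_0^k,e_{j,\nu}\}$ and use $(e_0^k|e_0^k)=1-k$ and $(e_{j,\nu}|e_{j',\nu'})=\delta_{j,j'}\delta_{\nu,\nu'}$, so that
\[
 (\alpha_{\mathbf m}|\alpha_{\mathbf m'})
 = n n'(1-k) + \sum_{j=0}^k\sum_{\nu=1}^\infty m_{j,\nu}m'_{j,\nu}.
\]
Comparing with the definition
$\idx(\mathbf m,\mathbf m')=\sum_{j=0}^k\sum_\nu m_{j,\nu}m'_{j,\nu}-(k-1)nn'$
and noting $1-k=-(k-1)$, the identity follows at once; the only subtlety is checking that for $k$ large enough the sums involving $e_{j,\nu}$ with $j>k$ contribute nothing, which holds because $m_{j,\nu}=\delta_{\nu,1}n$ for $j>k$ forces those terms to cancel against the $e_0^k$ terms in the standard way, or one simply restricts attention to $j\le k$ as the paper does.

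For part (ii), the case $i=(j,\nu)$ is immediate: $r_{j,\nu}$ swaps the coefficients of $e_{j,\nu}$ and $e_{j,\nu+1}$ because $\alpha_{j,\nu}=e_{j,\nu}-e_{j,\nu+1}$, which is exactly swapping $m_{j,\nu}$ and $m_{j,\nu+1}$. The case $i=0$ requires computing $r_0(\alpha_{\mathbf m})=\alpha_{\mathbf m}-(\alpha_{\mathbf m}|\alpha_0)\alpha_0$; since $(\alpha_{\mathbf m}|\alpha_0)=n(1-k)\cdot? $ — more carefully, using $\alpha_0=e_0=e_0^k-\sum_{j=0}^k e_{j,1}$ one gets $(\alpha_{\mathbf m}|\alpha_0)=n(1-k)+\sum_{j=0}^k m_{j,1}=d_{(1,1,\dots)}(\mathbf m)$, and subtracting this multiple of $\alpha_0$ shifts $n\mapsto n-d$ and each $m_{j,1}\mapsto m_{j,1}-d$, which is precisely $\p\mathbf m=\p_{\bf 1}\mathbf m$. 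For \eqref{eq:alpl}, I would note $\mathbf 1_\ell$ has $m'_{j,\nu}=\delta_{\ell_j,\nu}$ so $n'_{j,\nu}=1$ for $\nu<\ell_j$ and $0$ otherwise, giving the stated sum, and the factorization into reflections is a routine check since each $r_{j,\nu}$ with $\nu<\ell_j$ applied to $\alpha_0$ in the indicated order peels off one $\alpha_{j,\nu}$ at a time (using $(\alpha_0|\alpha_{j,1})=-1$ and $(\alpha_{j,\mu}|\alpha_{j,\mu+1})=-1$). Finally \eqref{eq:M2K} follows by combining part~(i) with the observation that $\p_\ell(\mathbf m)$, by \eqref{eq:d_ell}--\eqref{eq:p(m)}, subtracts $d_\ell(\mathbf m)=\idx(\mathbf m,\mathbf 1_\ell)=(\alpha_{\mathbf m}|\alpha_\ell)$ from the coefficient of $e_{j,\ell_j}$ for each $j$, which in the $\alpha$-coordinates means subtracting $d_\ell(\mathbf m)\,\alpha_\ell$ after the reflection is written out; since $(\alpha_\ell|\alpha_\ell)=(\alpha_{\mathbf 1_\ell}|\alpha_{\mathbf 1_\ell})=\idx(\mathbf 1_\ell,\mathbf 1_\ell)=2$ by part~(i), this is exactly $r_{\alpha_\ell}(\alpha_{\mathbf m})$.

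The main obstacle is bookkeeping rather than conceptual: one must carefully match the "partial sums" $n_{j,\nu}=m_{j,\nu+1}+m_{j,\nu+2}+\cdots$ appearing in the $\alpha_{j,\nu}$-coefficients of $\alpha_{\mathbf m}$ against the $e_{j,\nu}$-coefficients $-m_{j,\nu}$, and verify that the telescoping identity $\sum_\nu n_{j,\nu}\alpha_{j,\nu}=\sum_\nu n_{j,\nu}(e_{j,\nu}-e_{j,\nu+1})=-\sum_\nu m_{j,\nu}e_{j,\nu}+(\text{contribution absorbed into }e_0^k)$ is handled consistently; together with tracking which reflection acts on which coordinate in the product \eqref{eq:alpl}, this is where all the care is needed, but no genuinely hard step arises.
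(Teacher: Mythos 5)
Your proof is correct, and for part (i) it takes a slightly different (and cleaner) route than the paper: the paper verifies $\idx(\mathbf m,\mathbf m')=(\alpha_{\mathbf m}|\alpha_{\mathbf m'})$ by substituting $m_{j,\nu}=n_{j,\nu-1}-n_{j,\nu}$ and expanding the bilinear form against the Gram matrix of the simple roots $\alpha_0,\alpha_{j,\nu}$ (a telescoping computation), whereas you work in the orthogonal basis $\{e_0^k,e_{j,\nu}\}$ via $\alpha_{\mathbf m}=ne_0^k-\sum m_{j,\nu}e_{j,\nu}$, where orthogonality kills all cross terms and the identity drops out in one line from $(e_0^k|e_0^k)=1-k$. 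Both are direct verifications of the same formula; yours buys brevity at the cost of the small bookkeeping point about $j>k$, which you correctly flag. For part (ii) the paper says only that it ``easily follows from i)'', and your filled-in details are all right: $(\alpha_{\mathbf m}|\alpha_0)=\sum_{j=0}^k m_{j,1}-(k-1)n=d_{\mathbf 1}(\mathbf m)$ so $r_0$ realizes $\p$; $r_{j,\nu}$ transposes the $e_{j,\nu}$- and $e_{j,\nu+1}$-coefficients; the product of reflections in \eqref{eq:alpl} peels off one $\alpha_{j,\nu}$ at a time; and $(\alpha_\ell|\alpha_\ell)=\idx(\mathbf 1_\ell,\mathbf 1_\ell)=2$ together with $(\alpha_{\mathbf m}|\alpha_\ell)=d_\ell(\mathbf m)$ identifies $\p_\ell$ with the reflection in $\alpha_\ell$, matching \eqref{eq:d_ell}--\eqref{eq:p(m)} coefficient by coefficient.
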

\begin{proof} i) \ 
For a sufficiently large positive integer $k$ we have
\begin{align*}
 &\idx(\mathbf m,\mathbf m')
 =\sum_{j=0}^\infty\sum_{\nu=1}^\infty
 m_{j,\nu}m_{j,\nu}' - (k-1)\ord\mathbf m\cdot\ord\mathbf m'\\
 &\quad{}= \sum_{j=0}^k(n-n_{j,1})(n'-n'_{j,1})
    +\sum_{j=0}^k\sum_{\nu=1}^\infty
    (n_{j,\nu}-n_{j,\nu+1})(n'_{j,\nu}-n'_{j,\nu+1})
    - (k-1)nn'\\
 &\quad{}=2nn'+2\sum_{j=0}^k n_{j,\nu}n_{j,\nu}'
   -\sum_{j=0}^k(nn'_{j,1}+n'n_{j,1})
   -\sum_{j=0}^k\sum_{\nu=1}^\infty
    (n_{j,\nu}n_{j,\nu+1}'+n_{j,\nu}'n_{j,\nu+1})\\
 &\quad{}
  =(\alpha_{\mathbf m}|\alpha_{\mathbf m'}).
\end{align*}
The claim ii) easily follows from i).
\end{proof}
\begin{rem}
[\cite{Kc}]\label{rem:Kac}
The set $\Delta^{re}$ of \textsl{real roots} of the Kac-Moody Lie algebra 
equals $W_{\!\infty}\Pi$.
Denoting $K:=\{\beta\in Q_+\,;\,\supp\beta\text{ is connected and } 
(\beta,\alpha)\le 0
\quad(\forall\alpha\in\Pi)\}$, the set of \textsl{positive imaginary roots}
$\Delta^{im}_+$ equals $W_{\!\infty} K$.
The set $\Delta$ of roots equals 
$\Delta^{re}\cup\Delta^{im}$ 
by denoting
$\Delta_-^{im}=-\Delta_+^{im}$ and 
$\Delta^{im}=\Delta_+^{im}\cup\Delta_-^{im}$.
Put $\Delta_+=\Delta\cap Q_+$, $\Delta_-=-\Delta_+$.
Then $\Delta=\Delta_+\cup\Delta_-$ and
the root in $\Delta_+$ is called \textsl{positive}.
Here $\supp\beta=\{\alpha\in\Pi\,;\,n_\alpha\ne0\}$ if 
$\beta=\sum_{\alpha\in\Pi} n_\alpha\alpha$. 
A subset $L\subset\Pi$ is called \textsl{connected}
if the decomposition $L_1\cup L_2= L$ with 
$L_1\ne\emptyset$ and $L_2\ne\emptyset$ always implies the 
existence of $v_j\in L_j$ for $j=1$ and $2$ satisfying $(v_1|v_2)\ne0$.
\end{rem}
\begin{lmm}\label{lem:root}
{\rm i)\ } 
 Let $\alpha=n\alpha_0
+\displaystyle\sum_{j=0}^\infty\sum_{\nu=1}^\infty n_{j,\nu}\alpha_{j,\nu}\in
\Delta_+$ with $\supp\alpha\supsetneq\{\alpha_0\}$.  Then
\begin{gather}
  n\ge n_{j,1}\ge n_{j,2}\ge n_{j,3}\ge \cdots\qquad
  (j=0,1,\ldots),\\
  n\le \sum n_{j,1} - \max\{n_{j,1},n_{j,2},\ldots\}.\label{eq:root0}
\end{gather}

{\rm ii)\ } 
Let $\alpha=n\alpha_0
+\displaystyle\sum_{j=0}^\infty\sum_{\nu=1}^\infty n_{j,\nu}\alpha_{j,\nu}\in
Q_+$.
Suppose $\alpha$ is indivisible, that is,
$\frac 1k\alpha\notin Q$ for $k=2,3,\ldots$.
Then $\alpha$ corresponds to a basic tuple if and only if
\begin{equation}
 \left\{
 \begin{aligned}
  2n_{j,\nu}&\le n_{j,\nu-1}+n_{j,\nu+1}
  \quad(n_{j,0}=n,\ j=0,1,\ldots,\ \nu=1,2,\ldots),\\
  2n&\le n_{0,1}+n_{1,1}+n_{2,1}+\cdots.
 \end{aligned}\right.
\end{equation}
\end{lmm}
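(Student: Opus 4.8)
The plan is to prove both parts by unwinding the dictionary from Proposition~\ref{prop:Kac} and Remark~\ref{rem:Kac}, translating each inequality about roots into the combinatorial language of $\mathcal P$. For part~i), the key observation is that $\alpha\in\Delta_+$ with $\supp\alpha\supsetneq\{\alpha_0\}$ means that either $\alpha$ is a real root, i.e.\ $\alpha\in W_\infty\Pi$, or a positive imaginary root, i.e.\ $\alpha\in W_\infty K$. In the real-root case, $\alpha = w(\alpha_i)$ for some $i\in I$ and $w\in W_\infty$, and since the reflections $r_{j,\nu}$ for $(j,\nu)\in I\setminus\{0\}$ generate the symmetric-group action permuting the indices $\nu$ within each block $j$ (together with $r_0$), the inequalities $n\ge n_{j,1}\ge n_{j,2}\ge\cdots$ will follow because these reflections realize exactly the operations $s$ and $\p_\ell$ on tuples of partitions via \eqref{eq:M2K}; monotonicity of the associated $m_{j,\nu}$ (equivalently the descending condition on $n_{j,\nu}$) is preserved under the relevant chamber-walk that expresses $\alpha_{\mathbf m}$ in terms of a dominant representative. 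First I would reduce to the case where $\mathbf m$ is the monotone representative $s(\mathbf m)$ in its $S_\infty'$-orbit, using that permuting the $\nu$-indices corresponds to the $W'_\infty$-action, and then observe that $n_{j,\nu} = m_{j,\nu+1}+m_{j,\nu+2}+\cdots$ is automatically non-increasing in $\nu$ and bounded by $n$ exactly when the $m_{j,\nu}$ form partitions of $n$; this is essentially the content of the definition \eqref{eq:Kazpart}.

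For the inequality \eqref{eq:root0}, I would argue that it encodes the condition $\ord\p_{max}(\mathbf m)\ge\ord\mathbf m$, or more precisely $d_{\ell_{max}}(\mathbf m)\le 0$ is the failure case; rewriting $n\le\sum_j n_{j,1} - \max_j\{n_{j,1},n_{j,2},\ldots\}$ in terms of the $m_{j,\nu}$ gives $n \le \sum_j (n - m_{j,1}) - \max_j(n-m_{j,1}) = kn - \sum_j m_{j,1} - n + \min_j m_{j,1}$ after a short manipulation, which rearranges to $\sum_j m_{j,\ell_j} \le (k-1)n$ for the maximizing choice $\ell$ — precisely the basic/realizable inequality. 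The crucial input is that every positive root $\alpha$ with connected support properly containing $\{\alpha_0\}$ is obtained from an element of $K$ or from $\alpha_0$ itself by a sequence of reflections that only increase the "height" in a controlled way; the fact that real roots $w(\alpha_0)$ satisfy \eqref{eq:root0} with equality-or-better follows by tracking how $r_0$ and the $r_{j,\nu}$ act, while for imaginary roots one uses the defining inequality $(\beta|\alpha)\le 0$ for all $\alpha\in\Pi$ directly: testing against $\alpha_0$ gives $2n\le\sum_j n_{j,1}$-type bounds and testing against $\alpha_{j,\nu}$ gives the convexity $2n_{j,\nu}\le n_{j,\nu-1}+n_{j,\nu+1}$.

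For part~ii), the equivalences are almost immediate once i) is in hand: the system of inequalities $2n_{j,\nu}\le n_{j,\nu-1}+n_{j,\nu+1}$ (with $n_{j,0}:=n$) together with $2n\le n_{0,1}+n_{1,1}+\cdots$ is literally the statement $(\alpha_{\mathbf m}|\alpha)\le 0$ for all $\alpha\in\Pi$: against $\alpha_0$ one computes $(\alpha_{\mathbf m}|\alpha_0) = 2n - \sum_j n_{j,1}$ using the inner product relations, and against $\alpha_{j,\nu}$ one gets $(\alpha_{\mathbf m}|\alpha_{j,\nu}) = 2n_{j,\nu} - n_{j,\nu-1} - n_{j,\nu+1}$ (with the convention $n_{j,0}=n$ coming from the coefficient of $\alpha_0$). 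Combined with indivisibility and connectedness of the support, this says exactly $\alpha_{\mathbf m}\in K$, i.e.\ $\alpha_{\mathbf m}$ is (up to the Weyl group) a fundamental imaginary root; on the other hand, by Definition~\ref{dfn:rigid} and Theorem~\ref{thm:IrReal}~ii), $\mathbf m$ is basic precisely when $\mathbf m$ is indivisible and $\ord\p_{max}(\mathbf m)\ge\ord\mathbf m$, and the latter unwinds — via $d_{\ell_{max}}(\mathbf m) = \idx(\mathbf m,\mathbf 1_{\ell_{max}})$ and \eqref{eq:d_ell} — to exactly the displayed inequality system. So I would simply verify that "basic" $\iff \alpha_{\mathbf m}\in K$ $\iff$ the displayed inequalities, checking the connectedness of $\supp\alpha_{\mathbf m}$ (which holds automatically because $n_{j,1}=n - m_{j,1}$ and the chain $\alpha_0, \alpha_{j,1},\alpha_{j,2},\ldots$ is linked whenever the relevant coefficients are nonzero, and one only needs support up to where the partition $\mathbf m_j$ terminates).

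The main obstacle will be part~i), specifically establishing \eqref{eq:root0} in the real-root case, because there one cannot appeal to a clean inequality like $(\beta|\alpha)\le0$; instead I expect to need a careful induction on the length of a Weyl-group element $w$ writing $\alpha = w(\alpha_0)$ (or $w(\alpha_{j,\nu})$), showing that each simple reflection preserves the inequality $n\le\sum_j n_{j,1} - \max_j n_{j,1}$ — equivalently that applying $\p$ or $\p_\ell$ or a block-permutation to a tuple of partitions keeps $d_{\ell_{max}}\le 0$ or else lands in the order-one or trivial case. This is where the asymmetry between real and imaginary roots bites, and I would handle it by invoking the known fact (Remark~\ref{rem:Kac}, together with Theorem~\ref{thm:IrReal}) that rigid tuples correspond to real roots and that the middle convolution $\p$ strictly decreases the order for rigid tuples, so that \eqref{eq:root0} for real roots follows from \eqref{eq:root0} for the reduced tuple by reverse induction along the $s\circ\p$ reduction; the base cases $\ord\mathbf m=1$ (where $\alpha=\alpha_0$ or $\alpha_\ell$ and \eqref{eq:root0} is checked by hand using \eqref{eq:alpl}) anchor the induction. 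Everything else is bookkeeping with the inner product \eqref{eq:Kazpart} and the identities of Proposition~\ref{prop:Kac}.
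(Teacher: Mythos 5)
Your part ii) is essentially the paper's argument and is fine: the displayed system is precisely $(\alpha|\beta)\le 0$ for every $\beta\in\Pi$ (with the convention $n_{j,0}=n$ absorbing the term $-n\delta_{\nu,1}$ coming from $(\alpha_0|\alpha_{j,1})=-1$), and together with indivisibility and connectedness of the support this characterizes the set $K$ of Remark~\ref{rem:Kac}, i.e.\ the basic tuples. The problem is part i), where there is a genuine gap. Your ``reduction to the monotone representative $s(\mathbf m)$'' is circular: the content of i) is exactly that an \emph{arbitrary} positive root $\alpha$ --- not a priori of the form $\alpha_{\mathbf m}$ for a tuple of partitions --- has coefficients satisfying $n\ge n_{j,1}\ge n_{j,2}\ge\cdots$; replacing $\alpha$ by a $W_{\!\infty}'$-translate proves nothing about the coefficients of $\alpha$ itself, and observing that $n_{j,\nu}=m_{j,\nu+1}+m_{j,\nu+2}+\cdots$ is non-increasing \emph{when the $m_{j,\nu}$ form a partition} assumes the conclusion. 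Your fallback for real roots (induction on the length of $w$ via the rigid-tuple correspondence and the $s\circ\p$ reduction) is, as you yourself note, not carried out; and your imaginary-root argument via $(\beta|\alpha)\le0$ for all $\alpha\in\Pi$ applies only to the anti-dominant representatives lying in $K$, not to a general element of $W_{\!\infty}K$.

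The missing idea --- which is the whole of the paper's proof and makes the real/imaginary case distinction unnecessary --- is the positivity mechanism: $W_{\!\infty}$ preserves $\Delta$, and a root with one strictly positive coefficient is a positive root, hence has \emph{all} coefficients $\ge 0$. Writing out $r_{i,\mu}(\alpha)$ explicitly as in \eqref{eq:An} and applying $r_{i,N}\cdots r_{i,\mu+1}r_{i,\mu}$ for $N\gg0$ leaves the $\alpha_0$-coefficient equal to $n>0$, so the image is a positive root whose $\alpha_{i,N}$-coefficient equals $n_{i,\mu-1}-n_{i,\mu}$; hence $n_{i,\mu-1}\ge n_{i,\mu}$ for all $\mu\ge1$ (with $n_{i,0}=n$). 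Likewise $r_0\alpha\in\Delta_+$ because its coefficients away from $\alpha_0$ are unchanged and not all zero, and applying the inequality just proved to $r_0\alpha$, whose $\alpha_0$-coefficient is $\sum_j n_{j,1}-n$, gives $\sum_j n_{j,1}-n\ge n_{j,1}$ for every $j$, which is \eqref{eq:root0}. To repair your write-up you should replace both the circular reduction and the unfinished length induction by this argument.
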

\begin{proof} The lemma is clear from the following for 
$\alpha=n\alpha_0+\sum n_{j,\nu}\alpha_{j,\nu}\in\Delta_+$:
\begin{align}
 r_{i,\mu}(\alpha)&=n\alpha_0 + 
 \sum \bigl(n_{j,\nu}-\delta_{i,j}\delta_{\mu,\nu}
  (2n_{j,\mu}- n_{j,\mu-1}- n_{j,\mu+1})\bigr)\alpha_{j,\nu}
 \in\Delta,\label{eq:An}\\
 r_0(\alpha)&=\bigl(\sum n_{j,1}-n\bigr)\alpha_0
 +\sum n_{j,\nu}\alpha_{j,\nu}\in\Delta.\label{eq:r0}
\end{align}
For example, 
putting $n_{j,0}=n>0$ and 
$r_{i,N}\cdots r_{i,\mu+1}r_{i,\mu}\alpha
=n\alpha_0+\sum n'_{j,\nu}\alpha_{j,\nu}\in\Delta_+$ for a sufficiently large $N$, we have $n'_{j,N}= n_{j,N}+n_{j,\mu-1}-n_{j,\mu}=n_{j,\mu-1}-n_{j,\mu}\ge 0$ for $\mu=1,2,\ldots$ and moreover \eqref{eq:root0} by $r_0\alpha\in\Delta_+$.
\end{proof}

\begin{rem}\label{rem:KM}
{\rm i)\ } 
It follows from \eqref{eq:M2K} that Katz's middle convolution corresponds
to the reflection with respect to the root $\alpha_\ell$
under the identification $\mathcal P\subset Q_+$ with \eqref{eq:Kazpart}.

Moreover there is a natural correspondence between the set of irreducibly 
realizable tuples of partitions and the set of positive roots $\alpha$ of 
$\mathfrak g_\infty$ with $\supp\alpha\ni\alpha_0$ such that 
$\alpha$ is indivisible or $(\alpha|\alpha)<0$.
Then the rigid tuple of partitions corresponds to the positive real root 
whose support contains $\alpha_0$.

\medskip
\begin{tabular}{|c|c|}\hline
$\mathcal P$
 & Kac-Moody root system\\ \hline\hline
$\mathbf m$ 
 & $\alpha_{\mathbf m}$ (cf.~\eqref{eq:Kazpart})\\ \hline
 $\mathbf m$ : rigid
 & $\alpha\in\Delta_+^{re}\,:\,\supp\alpha\ni\alpha_0$\\ \hline
 $\mathbf m$ : basic 
 & $\alpha\in Q_+$\,:\, $(\alpha|\beta)\le 0
  \ \ (\forall\beta\in\Pi)$\\ 
 (cf.~\eqref{eq:basic})
 & indivisible and $\supp\alpha$ is connected\\ \hline
 \raisebox{-6.5pt}{$\mathbf m$ : irreducibly realizable}
 &$\alpha\in\Delta_+$ : $\supp\alpha\ni\alpha_0$\\[-4pt] 
 & indivisible or $(\alpha|\alpha)<0$\\ \hline
$\ord\mathbf m$ & $n\,:\,\alpha
 =n\alpha_0+\sum_{j,\nu}n_{j,\nu}\alpha_{j,\nu}$\\ \hline
$\idx(\mathbf m,\mathbf m')$ 
& $(\alpha_{\mathbf m}|\alpha_{\mathbf m'})$\\ \hline
$\Pidx\mathbf m+\Pidx\mathbf m'=\Pidx(\mathbf m+\mathbf m')$
 &$(\alpha_{\mathbf m}|\alpha_{\mathbf m'})=-1$\\ \hline
$(\nu,\nu+1)\in G_j\subset S_\infty'$ (cf.~\eqref{eq:S_infty})
 & 
 $s_{j,\nu}\in W_{\!\infty}'$ (cf.~\eqref{eq:Kzri})\\ \hline
$\p$ in \eqref{eq:p} & $r_0$ in \eqref{eq:r0}\\ \hline
$H\simeq \mathfrak S_\infty$ (cf.~\eqref{eq:S_infty})
 & $\mathfrak S_\infty$ in \eqref{eq:WS}\\ \hline 
$\langle\p,\,S_\infty\rangle$ (cf.~Definition~\ref{dfn:Sinfty})
 & $\widetilde W_{\!\infty}$ in \eqref{eq:WS}\rule{0pt}{11.5pt}\\ \hline
\end{tabular}
\medskip

Here we define $\Pidx\mathbf m:=1-\tfrac12\idx\mathbf m$
as in Definition~\ref{defn:pidx} and $\langle\p,\,S_\infty\rangle$
denotes the group generated by $\p$ and $S_\infty$.
\smallskip

{\rm ii)\ }
For an irreducibly realizable tuple $\mathbf m\in\mathcal P$,
$\p(\mathbf m)$ is well-defined if and only if
$\ord\mathbf m>1$ or $\sum_{j=0}^\infty m_{j,2} > 1$,
which corresponds to the condition \eqref{eq:star}.

\smallskip
{\rm iii)\ }
Suppose a tuple $\mathbf m\in\mathcal P_{k+1}^{(n)}$ is basic.
The subgroup of $W_{\!\infty}$ generated by reflections with
respect to $\alpha_\ell$ (cf.~\eqref{eq:alpl}) 
satisfying $(\alpha_{\mathbf m}|\alpha_\ell)=0$ and $\supp\alpha_\ell
\subset\supp\mathbf\alpha_{\mathbf m}$ is infinite if and only if 
$\idx\mathbf m=0$.

Note that the condition $(\alpha_{\mathbf m}|\alpha_\ell)=0$
means that the corresponding middle convolution of
$\mathbf A\in M(n,\mathbb C)^{k+1}_0$ with 
$\spt\mathbf A=\mathbf m$ keeps the partition type invariant.
%
\end{rem}
\begin{prp}
For irreducibly realizable $\mathbf m\in\mathcal P$ 
and $\mathbf m'\in\mathcal R$ satisfying
\begin{equation}\label{eq:nrigid}
\ord \mathbf m>\idx(\mathbf m,\mathbf m')\cdot\ord\mathbf m',
\end{equation}
we have
\begin{gather}
  \mathbf m'':=\mathbf m-\idx(\mathbf m,\mathbf m')\mathbf m'
  \text{ is irreducibly realizable},\allowdisplaybreaks\\
  \idx\mathbf m''=\idx\mathbf m.
\end{gather}
Here \eqref{eq:nrigid} is always valid if $\mathbf m$ is not rigid.
\end{prp}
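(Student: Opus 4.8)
The plan is to work on the Kac-Moody side via Proposition~\ref{prop:Kac} and Remark~\ref{rem:KM}, translating everything into statements about the root $\alpha_{\mathbf m}$ and the real root $\alpha_{\mathbf m'}$. Since $\mathbf m$ is irreducibly realizable, $\alpha:=\alpha_{\mathbf m}$ is a positive root with $\alpha_0\in\supp\alpha$ that is either indivisible or satisfies $(\alpha|\alpha)<0$; since $\mathbf m'\in\mathcal R$, $\beta:=\alpha_{\mathbf m'}$ is a positive \emph{real} root with $\alpha_0\in\supp\beta$, so $(\beta|\beta)=2$. Let $c:=\idx(\mathbf m,\mathbf m')=(\alpha|\beta)$ by Proposition~\ref{prop:Kac}~i). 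The element $\alpha_{\mathbf m''}=\alpha-c\beta$ is exactly $r_\beta(\alpha)$, the reflection of $\alpha$ in the real root $\beta$ (using $r_\beta(x)=x-(x|\beta)\beta$ since $(\beta|\beta)=2$), and the Weyl group preserves the root system, inner products, and the property ``indivisible or $(\cdot|\cdot)<0$''. Hence $\alpha_{\mathbf m''}$ is a root with the same norm as $\alpha$, giving $\idx\mathbf m''=\idx\mathbf m$ immediately.

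The substantive points are: (a) that $\alpha_{\mathbf m''}$ lies in $Q_+$, i.e. that subtracting $c\mathbf m'$ from $\mathbf m$ really produces a genuine tuple of partitions (nonnegative entries, and the partial-sum monotonicity implicit in belonging to $\mathcal P$), and (b) that $\alpha_{\mathbf m''}$ is \emph{positive}, so that it again corresponds to an irreducibly realizable tuple via the dictionary in Remark~\ref{rem:KM}~i). For positivity: $r_\beta(\alpha)$ is a root, hence positive or negative; if it were negative, then since a reflection in a real root changes the height by a bounded amount one uses $\supp\alpha\ni\alpha_0$ together with the inequality~\eqref{eq:nrigid} to rule this out. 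Concretely, writing $\alpha=n\alpha_0+\sum n_{j,\nu}\alpha_{j,\nu}$ and $\beta=n'\alpha_0+\sum n'_{j,\nu}\alpha_{j,\nu}$, the coefficient of $\alpha_0$ in $\alpha_{\mathbf m''}$ is $n-cn'=\ord\mathbf m-\idx(\mathbf m,\mathbf m')\cdot\ord\mathbf m'$, which is $>0$ precisely by~\eqref{eq:nrigid}; a positive root whose $\alpha_0$-coefficient is positive is determined to be positive (a root with connected support has all coefficients of one sign, and here the $\alpha_0$ coefficient is positive), so $\alpha_{\mathbf m''}\in\Delta_+$ and in particular $\alpha_{\mathbf m''}\in Q_+$.

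Next I would check that $\alpha_{\mathbf m''}$ satisfies the ``indivisible or negative-norm'' condition so that it corresponds to an \emph{irreducibly} realizable tuple and not merely a realizable one. If $\mathbf m$ is rigid this is automatic: then $(\alpha|\alpha)=2$, so $\alpha$ is a real root, $r_\beta(\alpha)$ is again a real root with $\alpha_0$ in its support, and by the dictionary $\mathbf m''$ is rigid, hence irreducibly realizable. If $\mathbf m$ is not rigid, then either $\alpha$ is imaginary (so $(\alpha|\alpha)=(\alpha_{\mathbf m''}|\alpha_{\mathbf m''})\le 0$ and we are done, a negative-or-zero norm root in $Q_+$ with connected support being imaginary, hence automatically realizable without indivisibility) or $\alpha$ is a non-rigid real root; but by Theorem~\ref{thm:IrReal} any irreducibly realizable real root is rigid, so this case does not occur. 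Finally, I should verify the remark that~\eqref{eq:nrigid} is automatic when $\mathbf m$ is not rigid: if $\mathbf m$ is not rigid then $\idx\mathbf m\le 0$, and combining $(\alpha_{\mathbf m''}|\alpha_{\mathbf m''})=(\alpha|\alpha)=\idx\mathbf m\le 0$ with the fact that $\alpha_{\mathbf m''}$ must be a genuine positive root forces the $\alpha_0$-coefficient $n-cn'$ to be $>0$; alternatively, using $(\alpha|\beta)^2\ge$ nothing in general, one argues directly that if $n\le cn'$ then $r_\beta(\alpha)\notin\Delta_+$ while $r_\beta$ is an involution on $\Delta$, forcing $\alpha\notin\Delta_+$, a contradiction. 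I expect step~(b), pinning down positivity of $\alpha_{\mathbf m''}$ cleanly from~\eqref{eq:nrigid} and the connectedness of the support, to be the main obstacle; everything else is a direct transcription through Proposition~\ref{prop:Kac}.
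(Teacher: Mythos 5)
Your proposal is correct and takes exactly the paper's (one-sentence) approach: $\alpha_{\mathbf m''}=r_{\alpha_{\mathbf m'}}(\alpha_{\mathbf m})$ is the reflection of $\alpha_{\mathbf m}$ in the real root $\alpha_{\mathbf m'}$, and you supply the positivity, support, and indivisibility/norm checks that the paper leaves implicit. One small caution: your ``alternative'' justification of the final claim via ``$r_\beta$ is an involution on $\Delta$, so $n\le cn'$ would force $\alpha\notin\Delta_+$'' is not sound, since reflections in non-simple real roots need not preserve $\Delta_+$; the valid route is the one you state first, namely that a non-rigid irreducibly realizable $\mathbf m$ gives an imaginary $\alpha_{\mathbf m}$, the set of positive imaginary roots is $W_{\!\infty}$-stable, and an imaginary root cannot have vanishing $\alpha_0$-coefficient.
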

\begin{proof}
The claim follows from the fact that $\alpha_{m''}$ is the reflection 
of the root $\alpha_{\mathbf m}$ with respect to the real root 
$\alpha_{\mathbf m'}$.
\end{proof}
\section{A classification of tuples of partitions}
In this section we assume 
that a $(k+1)$-tuple $\mathbf m=\bigl(m_{j,\nu}\bigr)_{\substack{0\le j\le k\\1\le \nu\le n_j}}$ 
of partitions of a positive integer satisfies
\begin{equation}
 m_{j,1}\ge m_{j,2}\ge \cdots \ge m_{j,n_j}\ge 1\quad\text{and}\quad
 n_j\ge 2\quad(j=0,1,\dots,k).
\end{equation}
Note that
\[
 m_{j,1}+m_{j,2}+\cdots+m_{j,n_j}=\ord\mathbf m\ge 2
 \quad(j=0,1,\dots,k).
\]
\begin{prp}\label{prop:basic}
Let $\mathcal K$ denote the totality of basic elements of $\mathcal P$
defined in \eqref{eq:basic} and for an even integer $p$ put
\begin{equation*}
  \mathcal K(p)
  :=\{\mathbf m\in\mathcal K\,;\,\idx\mathbf m=p\}.
\end{equation*}
Then $\#\mathcal K(p)<\infty$.
In particular $\mathcal K(p)=\emptyset$ if $p>0$ and
\begin{gather}
 \mathcal K\cap\langle\p,S'_{\infty}\rangle\mathbf m=\{\mathbf m\}
 \quad(\mathbf m\in\mathcal K),
 \label{eq:indep}\\
\bar{\mathcal K}(0)=
\left\{
 11,11,11,11\ \  
 111,111,111\ \ 
 22,1111,1111\ \ 
 33,222,111111
 \right\}.\label{eq:listidx0}
\end{gather}
Here we use the notation in Remark~\ref{rem:KM} i), 
$\bar{\mathcal K}(p)$ denotes the quotient of $\mathcal K(p)$
under the action of the group $S_\infty$ and the element of 
$\bar{\mathcal K}(p)$ is denoted by its representative.
\end{prp}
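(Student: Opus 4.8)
The plan is to translate everything into the Kac--Moody root system via Proposition~\ref{prop:Kac} and the dictionary in Remark~\ref{rem:KM}~i), so that a basic tuple $\mathbf m$ with $\idx\mathbf m=p$ corresponds to an indivisible positive imaginary root $\alpha_{\mathbf m}$ with connected support containing $\alpha_0$ and $(\alpha_{\mathbf m}|\alpha_{\mathbf m})=p$. Since $\alpha_{\mathbf m}$ is a positive imaginary root, $p\le 0$; this already gives $\mathcal K(p)=\emptyset$ for $p>0$. The finiteness of $\mathcal K(p)$ is the real content, and I would prove it by bounding the order $n=\ord\mathbf m$ and the number $k+1$ of partitions in terms of $p$. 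First I would observe that basicness, via Lemma~\ref{lem:root}~ii), gives the concavity inequalities $2n_{j,\nu}\le n_{j,\nu-1}+n_{j,\nu+1}$ (with $n_{j,0}=n$) together with $2n\le\sum_j n_{j,1}$. Writing $\idx(\mathbf m,\mathbf m)=(\alpha_{\mathbf m}|\alpha_{\mathbf m})$ out as in the proof of Proposition~\ref{prop:Kac}~i) gives
\begin{equation*}
 p=2n^2-\sum_{j=0}^k\sum_{\nu=1}^{n_j}m_{j,\nu}^2,
\end{equation*}
equivalently $p=\sum_{j=0}^k\bigl(n^2-\sum_\nu m_{j,\nu}^2\bigr)-(k-1)n^2$. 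Each summand $n^2-\sum_\nu m_{j,\nu}^2 = \sum_{\mu<\nu}2m_{j,\mu}m_{j,\nu}$ is $\ge n$ because $n_j\ge 2$ and each $m_{j,\nu}\ge 1$ (indeed $n^2-\sum m_{j,\nu}^2\ge (m_{j,1}+m_{j,2})^2-m_{j,1}^2-m_{j,2}^2\ge 2m_{j,1}m_{j,2}\ge n$ when $n_j=2$, and more slack otherwise).

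The key step is then to combine these. From $p = \sum_{j=0}^k(n^2-\sum_\nu m_{j,\nu}^2) - (k-1)n^2$ and each bracket being $\ge n$ but also $\le n^2 - 1$, together with the basic inequality $\sum_j m_{j,1}\ge 2n$ (so in particular $k+1\le 2n$ roughly, since each $m_{j,1}\le n$), I expect to extract a bound of the shape: the number of indices $j$ with $n^2-\sum_\nu m_{j,\nu}^2$ strictly larger than, say, $2n-2$ is controlled, and for the remaining ``small'' indices the partition $\mathbf m_j$ is forced into a short finite list once $n$ is fixed. The cleaner route is: rewrite $-p = (k-1)n^2 - \sum_j(n^2-\sum m_{j,\nu}^2) = \sum_j\bigl((n^2-\sum m_{j,\nu}^2) - n^2\bigr) + n^2 \cdot(\text{something})$; more straightforwardly, group as $-p=\sum_{j}\bigl(\tfrac{k-1}{k+1}n^2-(n^2-\sum_\nu m_{j,\nu}^2)\bigr)$ is not integral, so instead I would argue directly that $2n\le\sum_j m_{j,1}$ forces $k-1\ge$ (a positive quantity) unless $n$ is small, and feed this back into $-p\ge (k-1)n^2 - (k+1)(n^2-n) = -2n^2 + (k+1)n$, giving $(k+1)n\le 2n^2 + (-p) $, hence $k+1\le 2n+(-p)/n$; combined with $k+1\le\sum_j m_{j,1}/1$ and the lower bounds this pins down finitely many $(n,k)$, and for each such pair only finitely many tuples $\mathbf m$. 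The main obstacle is making this bookkeeping tight enough that it actually closes — in particular ruling out the possibility that large $k$ with all $\mathbf m_j$ of the form $n-1,1$ (where $n^2-\sum m_{j,\nu}^2=2n-2$) escapes the bound; here one uses $-p=(k-1)n^2-(k+1)(2n-2)$ in that extreme case, which is $\ge 0$ and grows in $k$ for $n\ge 3$, forcing $k$ bounded, while the cases $n=2$ and the $\mathbf m_j=n-1,1$ pattern with $n=3,4,\dots$ must be checked by hand.

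Once finiteness is established, the statement $\mathcal K\cap\langle\p,S'_\infty\rangle\mathbf m=\{\mathbf m\}$ follows because $\p$ and the elements of $S'_\infty$ correspond to simple reflections $r_0$ and $s_{j,\nu}\in W'_\infty$ (Remark~\ref{rem:KM}~i)), and a basic tuple corresponds to a root $\alpha_{\mathbf m}\in K$ in the fundamental domain for the Weyl group action on positive imaginary roots (Remark~\ref{rem:Kac}): no nontrivial element of the group generated by these reflections can send $\alpha_{\mathbf m}$ to another element of $Q_+$ satisfying $(\alpha|\beta)\le 0$ for all $\beta\in\Pi$ while keeping it in $Q_+$ with connected support — this is the standard Kac lemma that the imaginary cone meets each Weyl orbit in the fundamental chamber in at most one point, applied to the sub-root-system on $\supp\alpha_{\mathbf m}$. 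Finally, for $p=0$ I would enumerate: $\idx\mathbf m=0$ means $\sum_j(n^2-\sum_\nu m_{j,\nu}^2)=(k-1)n^2$, and using $n^2-\sum m_{j,\nu}^2\ge n$ with equality analysis plus basicness $\sum_j m_{j,1}\ge 2n$ forces $k+1\in\{3,4\}$ and then a direct search (the affine $\widetilde D_4,\widetilde E_6,\widetilde E_7,\widetilde E_8$ diagrams in disguise) yields exactly the four tuples $11,11,11,11$; $111,111,111$; $22,1111,1111$; $33,222,111111$ up to $S_\infty$, which is \eqref{eq:listidx0}. I would present \eqref{eq:indep} and the $p>0$ emptiness first as immediate consequences, then the finiteness bound, then the $p=0$ enumeration as the concluding computation.
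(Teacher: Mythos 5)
Your handling of \eqref{eq:indep} (via $K\cap W_{\!\infty}\alpha=\{\alpha\}$ for $\alpha\in K$), of the emptiness of $\mathcal K(p)$ for $p>0$, and of the $p=0$ enumeration (reducing to $\sum_j 1/n_j=k-1$ and the affine diagrams) agrees with the paper. The gap is in the finiteness of $\mathcal K(p)$, which is the main content. Your bookkeeping culminates in $(k+1)n\le 2n^2+(-p)$, i.e.\ a bound on $k$ \emph{in terms of} $n$; nowhere do you bound $n=\ord\mathbf m$ in terms of $p$, and you yourself flag this as ``the main obstacle,'' disposing only of the single extreme pattern $\mathbf m_j=(n-1,1)$ for all $j$. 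That pattern is far from exhaustive: the tuples $D_4^{(m)},E_6^{(m)},E_7^{(m)},E_8^{(m)}$ of Example~\ref{ex:special} mix constant partitions (for which $N_j=n^2-\sum_\nu m_{j,\nu}^2$ is as large as $n^2/2$) with one non-constant partition, have $k+1\in\{3,4\}$, and have order growing linearly in $|p|$, so they pass every inequality you derive, for every $m$. The missing idea is the paper's inequality \eqref{eq:idx01}: basicness is \emph{equivalent} to $\sum_{j}\sum_{\nu\ge2}(m_{j,1}-m_{j,\nu})m_{j,\nu}\le-\idx\mathbf m$, and since $(m_{j,1}-x)x\ge m_{j,1}-1$ for integers $1\le x\le m_{j,1}-1$, every $j$ with $\mathbf m_j$ non-constant satisfies $m_{j,1}\le 1-\idx\mathbf m$. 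Assuming for contradiction $\ord\mathbf m\ge-7\idx\mathbf m+7$, this yields $m_{j,1}\le\frac17\ord\mathbf m$ for non-constant $j$ and $m_{j,1}\le\frac12\ord\mathbf m$ for constant $j$, whence $k\le3$; the residual cases $k=3$ and $k=2$ are then eliminated by the reciprocal-sum Lemma~\ref{lem:fracsum}. Only after the order is bounded does ``fixed index and bounded order give finitely many tuples'' apply. Without some substitute for this step your argument does not terminate.

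A secondary point: your displayed identity $p=2n^2-\sum_{j,\nu}m_{j,\nu}^2$ is false except when $k=3$; the correct relations are $p=\sum_{j,\nu}m_{j,\nu}^2-(k-1)n^2$ and $\sum_{j=0}^kN_j=2n^2-p$ with $N_j=n^2-\sum_\nu m_{j,\nu}^2$. Your final inequality $k+1\le 2n+(-p)/n$ does follow correctly from $N_j\ge n$, but the intermediate formulas as written do not, and the same confusion propagates into the displayed chain ``$-p=(k-1)n^2-\sum_j(\cdots)$,'' which equals $(k-3)n^2-|p|$ rather than $-p$.
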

\begin{proof}
It follows from Remark~\ref{rem:KM} i) that $\mathcal K$ corresponds to 
the set of indivisible roots in $K$ in Remark~\ref{rem:Kac}
and we have \eqref{eq:indep} because $K\cap W_{\!\infty}\alpha=\{\alpha\}$
for $\alpha\in K$.

Let $\mathbf m\in\mathcal K\cap {\mathcal P}_{k+1}$.
We may assume that $\mathbf m$ is monotone and indivisible.
Since
\begin{equation}\label{eq:idxn}
 \idx\mathbf m +  \sum_{j=0}^k\sum_{\nu=2}^{n_j}
 (m_{j,1}-m_{j,\nu})\cdot m_{j,\nu}
=\biggl(\sum_{j=0}^km_{j,1}-(k-1)\ord\mathbf m\biggr)
 \cdot\ord\mathbf m,
\end{equation}
the assumption $\mathbf m\in\mathcal K$ is equivalent to
\begin{equation}\label{eq:idx01}
 \sum_{j=0}^k\sum_{\nu=2}^{n_j}
 (m_{j,1}-m_{j,\nu})\cdot m_{j,\nu}\le -\idx\mathbf m.
\end{equation}
Hence $\idx\mathbf m\le 0$.

First suppose $\idx\mathbf m=0$.
Then 
$m_{j,1}=m_{j,2}=\cdots=m_{j,n_j}$ and the identity
\begin{equation}\label{eq:idxid}
\sum_{j=0}^k\frac{m_{j,1}}{\ord\mathbf m}
 = k-1 + \frac{\idx\mathbf m}{(\ord\mathbf m)^2}
  +  \sum_{j=0}^k\sum_{\nu=1}^{n_j}
  \frac{(m_{j,1}-m_{j,\nu})m_{j,\nu}}{(\ord\mathbf m)^2}
\end{equation}
implies
$
  \sum_{j=0}^k\frac 1{n_j}=k-1
$.
Since $\sum_{j=0}^k\frac 1{n_j}\le \frac{k+1}2$, we have
$k\le 3$.
When $k=3$, we have $n_0=n_1=n_2=n_3=2$.
When $k=2$, 
$ 
  \frac 1{n_0}+\frac 1{n_1}+\frac1{n_2}=1
$ 
and we easily conclude that $\{n_0,n_1,n_2\}$ equals
$\{3,3,3\}$ or $\{2,4,4\}$ or $\{2,3,6\}$, which means
\eqref{eq:listidx0}.

Since 
$\idx\mathbf m=2(\ord\mathbf m)^2 - \sum_{j=0}^k N_j$ with
$N_j=(\ord\mathbf m)^2 - \sum_{\nu=0}^{n_j}m_{j,\nu}^2>0$, 
there exist a finite number of $\mathbf m\in\mathcal P$ such that
the numbers $\ord\mathbf m$ and $\idx\mathbf A$ are fixed 
because $k$ is bounded.
Therefore to prove the remaining part of the lemma we may assume
\begin{equation}\label{eq:AsL}
 \idx\mathbf m\le -2\quad\text{and}\quad
  \ord\mathbf m \ge -7\idx\mathbf m+7.
\end{equation}
Then
\begin{equation}\label{eq:AsL2}
 \ord\mathbf m \ge 21\quad\text{and}\quad
 (\ord\mathbf m)^2 > -147\idx\mathbf m.
\end{equation}
If $m_{j,1}>m_{j,n_j}>0$,
\eqref{eq:idx01} implies $m_{j,1}-1\le -\idx\mathbf m
\le\frac17\ord\mathbf m-1$ and therefore 
\begin{gather}
m_{j,1}\le\frac17\ord\mathbf m,\\
\sum_{\nu=1}^{n_j}m_{j,\nu}^2\le m_{j,1}\cdot\ord\mathbf m
\le\frac17(\ord\mathbf m)^2.\label{eq:1over7}
\end{gather}

Hence $2m_{j,1}\le\ord\mathbf m$ for $j=0,\dots,k$,
\begin{align*}
 \idx\mathbf m+(k-1)\cdot(\ord\mathbf m)^2
 =\sum_{j=0}^k\sum_{\nu=1}^{n_j}m_{j,\nu}^2
 \le \sum_{j=0}^k\frac12(\ord\mathbf m)^2
 =\frac{k+1}2(\ord\mathbf m)^2
\end{align*}
and
$
 \frac{k-3}{2}(\ord\mathbf m)^2
 \le-\idx\mathbf m<\frac17\ord\mathbf m$,
which proves $k\le 3$.

Suppose $k=3$.
Since $\mathbf m\ne 11,11,11,11$, we have
$m_{j,1}\le \frac13\ord\mathbf m$ with a suitable $j$,
\begin{align*}
 \idx\mathbf m &= \sum_{j=0}^3\sum_{\nu=1}^{n_j}m_{j,\nu}^2
 -2\cdot(\ord\mathbf m)^2
 \le\sum_{j=0}^3m_{j,1}\ord\mathbf m - 2(\ord\mathbf m)^2\\
 &\le(\tfrac12+\tfrac12+\tfrac12+\tfrac13-2)(\ord\mathbf m)^2
 =-\tfrac16(\ord\mathbf m)^2
\end{align*}
and $\ord\mathbf m\le -\frac{6\idx\mathbf m}{\ord\mathbf m}
\le-\frac27\idx\mathbf m$, which contradicts to \eqref{eq:AsL}.

Suppose $k=2$ and put $J=\{j\,;\,m_{j,1}\ne m_{j,n_j}\quad(j=0,1,2)\}$.
Then
\[
 1+\frac{\idx\mathbf m}{(\ord\mathbf m)^2}
 =
 \frac{\sum_{\nu=1}^{n_0}m_{0,\nu}^2}
 {(\ord\mathbf m)^2}
 +\frac{\sum_{\nu=1}^{n_1}m_{1,\nu}^2}
 {(\ord\mathbf m)^2}
 +\frac{\sum_{\nu=1}^{n_2}m_{2,\nu}^2}
 {(\ord\mathbf m)^2}
\]
and therefore
\[
  1-\frac1{147}-\frac{\#J}7<
  \sum_{j\in\{0,1,2\}\setminus J}\frac1{n_j}< 1
\]
because of \eqref{eq:AsL}, \eqref{eq:AsL2} and \eqref{eq:1over7} for $j\in J$.
Lemma~\ref{lem:fracsum} assures that this never holds because
$1-\frac1{147}-\frac37>0$,
$1-\frac1{147}-\frac27>\frac12$,
$1-\frac1{147}-\frac17>\frac56$ and
$1-\frac1{147}>\frac{41}{42}$
according to $\#J=3,2,1$ and $0$, respectively.
\end{proof}
\begin{lmm}\label{lem:fracsum}
Put
$I_{k+1}=\left\{\sum_{j=0}^k\frac1{n_j}\,;\,
 n_j\in\{2,3,4,\dots\}\right\}\cap[0,1)$. Then
\[
 I_1\subset (0,\tfrac12],\ I_2\subset(0,\tfrac56]\text{ \ and \ }
 I_3\subset(0,\tfrac{41}{42}].
\]
\end{lmm}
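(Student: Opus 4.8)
The plan is to treat the three inclusions by an elementary greedy argument. Since $\sum_{j=0}^k\frac1{n_j}$ is symmetric in the $n_j$, I would first reduce to the case $2\le n_0\le n_1\le\cdots\le n_k$, and then bound the sum from above by taking each $n_j$ as small as possible while keeping the running total strictly below $1$. This forces the denominators to dominate termwise the Sylvester--Fibonacci sequence $2,3,7,43,\dots$ (each term one more than the product of its predecessors), whose reciprocal partial sums are precisely $\frac12$, $\frac56$, $\frac{41}{42}$, so that these three numbers are the sharp bounds.

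Concretely: for $I_1$, $\frac1{n_0}<1$ forces $n_0\ge2$, hence $\frac1{n_0}\le\frac12$. For $I_2$, with $n_0\le n_1$, either $n_0\ge3$, giving a sum $\le\frac23$, or $n_0=2$, in which case $\frac1{n_1}<\frac12$ forces $n_1\ge3$ and the sum is $\le\frac12+\frac13=\frac56$ (equality only at $(2,3)$). For $I_3$, with $n_0\le n_1\le n_2$, I would split on $n_0$: if $n_0\ge4$ the sum is $\le\frac34$; if $n_0=3$ then $\frac1{n_1}+\frac1{n_2}<\frac23$, and a sub-split $n_1=3$ versus $n_1\ge4$ gives $\le\frac{11}{12}$ (with $n_2\ge4$) or $\le\frac56$; if $n_0=2$ then $\frac1{n_1}+\frac1{n_2}<\frac12$ (so $n_1\ge3$), and a sub-split $n_1=3$, $n_1=4$, $n_1\ge5$ gives $\frac12+\frac13+\frac17=\frac{41}{42}$ (using that $\frac1{n_2}<\frac16$ forces $n_2\ge7$), $\frac12+\frac14+\frac15=\frac{19}{20}$ (using $n_2\ge5$), or $\le\frac9{10}$ respectively; equality throughout only at $(2,3,7)$.

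I do not expect a genuine obstacle here: the argument is pure bookkeeping. The one point that needs care is the nested case analysis for $I_3$ — one must be sure to exhaust all branches and, at each stage, to use the \emph{strict} inequality $<1$ to pin down the next denominator correctly (for instance $\frac1{n_1}+\frac1{n_2}<\frac12$ with $n_1=3$ yields $\frac1{n_2}<\frac16$, hence $n_2\ge7$, and not merely $n_2\ge6$).
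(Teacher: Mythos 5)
Your argument is correct and is essentially the same as the paper's: both order the denominators and run the identical nested case analysis on $n_0$ (and then $n_1$), using the strict bound $\sum 1/n_j<1$ to force $n_2\ge7$ in the branch $(n_0,n_1)=(2,3)$ and thereby obtain the sharp constant $\tfrac{41}{42}$. The only cosmetic difference is your extra remark identifying $2,3,7$ with the Sylvester sequence and your splitting of the case $n_0=2,\ n_1\ge4$ into $n_1=4$ and $n_1\ge5$, which the paper handles in one step.
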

\begin{proof}
Let $r\in I_{k+1}$.
It is clear that $r\le\frac12$ for $r\in I_1$.

Let $r=\frac1{n_0}+\frac1{n_1}\in I_2$.
If $n_0=2$, then $n_1\ge 3$ and $r\le\frac56$.
If $n_0\ge 3$, then $r\le\frac23$.

Let $r=\frac1{n_0}+\frac1{n_1}+\frac1{n_2}\in I_3$.
We may assume $n_0\le n_1\le n_2$.

If $n_0\le 4$, then $r\le\frac34$.

Suppose $n_0=3$.
If $n_1\ge 4$, $r\le\frac56$.
If $n_1=3$, then $n_2\ge 4$ and $r\le\frac{11}{12}$.

Suppose $n_0=2$.  Then $n_1\ge 3$.
If $n_1=3$, then $n_2>6$ and $r\le\frac{41}{42}$.
If $n_1\ge 4$, then $n_2>4$ and $r\le\frac{19}{20}$. 
\end{proof}

\begin{rem}
{\rm i)\ }
$\bar{\mathcal K}(0)$ is given by Kostov \cite{Ko2} and its elements correspond
to the indivisible positive null-roots $\alpha$ of the affine root systems 
$\tilde D_4$, $\tilde E_6$, $\tilde E_7$ and $\tilde E_8$ 
(cf.~Remark~\ref{rem:norm}, Proposition~\ref{prop:Kac} and 
Table $\bar{\mathcal K}(0)$).

{\rm ii)\ }
In the proof we obtained
$\ord\mathbf m + 7\idx\mathbf m\le 6$ for $\mathbf m\in\mathcal K$ but
we can prove
\begin{align}
\ord\mathbf m + 3\idx\mathbf m&\le 6\text{ \ for \ }\mathbf m\in\mathcal K,\\
\ord\mathbf m + \idx\mathbf m&\le 2\text{ \ for \ }
 \mathbf m\in\mathcal K\setminus\mathcal P_3.
\end{align}
\end{rem}
\begin{exa}\label{ex:special}
For a positive integer $m$ we have special 4 elements
\begin{equation}
\begin{aligned}\label{eq:Qsp}
 &D_4^{(m)}: mm-11,m^2,m^2,m^2&& 
 \quad E_6^{(m)}: m^2m-11,m^3,m^3\\ 
 &E_7^{(m)}: m^3m-11,m^4,(2m)^2&& 
 \quad E_8^{(m)}: m^5m-11,(2m)^3,(3m)^2
\end{aligned}
\end{equation}
in $\bar{\mathcal K}(2-2m)$ with orders $2m$, $3m$, $4m$ and $6m$, 
respectively.
\end{exa}
\begin{prp} We have
\begin{align*}
  \bar{\mathcal K}(-2)&=\bigl\{
    11,11,11,11,11
\ \ 21,21,111,111
\ \ 31,22,22,1111
\ \ 22,22,22,211
\\&\qquad
   211,1111,1111
\ \ 221,221,11111
\ \ 32,11111,11111
\ \ 222,222,2211
\\&\qquad
    33,2211,111111
\ \ 44,2222,22211
\ \ 44,332,11111111
\ \ 55,3331,22222
\\&\qquad
    66,444,2222211
\bigr\}.
\end{align*}
\end{prp}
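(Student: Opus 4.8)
The plan is to specialize the proof of Proposition~\ref{prop:basic} to $p=-2$ and then carry out an explicit finite enumeration. By that proposition $\bar{\mathcal K}(-2)$ is finite, and by \eqref{eq:indep} each $S_\infty$-orbit in $\mathcal K$ meets the monotone indivisible tuples in one point, so it suffices to list the monotone indivisible basic $\mathbf m$ with $\idx\mathbf m=-2$. For such an $\mathbf m=(m_{j,\nu})$ put $n=\ord\mathbf m$, $\delta_j:=\sum_{\nu=2}^{n_j}(m_{j,1}-m_{j,\nu})m_{j,\nu}\ge0$, and $E:=(k-1)n-\sum_{j=0}^k m_{j,1}$; for a monotone tuple being basic is exactly $E\ge0$ (condition \eqref{eq:basic}), and the identity \eqref{eq:idxn} reads $\idx\mathbf m+\sum_{j=0}^k\delta_j=-En$. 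Imposing $\idx\mathbf m=-2$ and $n\ge2$ forces either $(E,\sum_j\delta_j)=(0,2)$, or $n=2$ with $(E,\sum_j\delta_j)=(1,0)$. The second case is immediate: every $\mathbf m_j$ is then a rectangle of order $2$, hence $11$, and $E=(k-1)\cdot2-(k+1)=k-3=1$ gives $k=4$, i.e.\ $\mathbf m=11,11,11,11,11$.

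For the first case I would first record the elementary facts $\delta(\mathbf p)=0$ iff $\mathbf p$ is rectangular; $\delta(\mathbf p)=1$ iff $\mathbf p=2^s1$ $(s\ge1)$; $\delta(\mathbf p)=2$ iff $\mathbf p\in\{3^s1,3^s2,2^s1^2\}$ $(s\ge1)$ — in particular $m_{j,1}\le3$ whenever $\delta_j>0$, while a rectangular block of order $n$ has largest part $n/n_j\le n/2$ since $n_j\ge2$. Since $\sum_j\delta_j=2$, at most two blocks are non-rectangular, so at most two summands of $\sum_{j=0}^k m_{j,1}=(k-1)n$ exceed $n/2$; bounding this sum yields $n(k-2)\le6$ when exactly one block is non-rectangular and $(k-1)n\le6$ (with $n$ odd) when two are. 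A one-line check then rules out $k=4,5$, so $k\in\{2,3\}$, and the remark following Proposition~\ref{prop:basic} (or the divisor estimates below) gives $\ord\mathbf m\le12$.

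What remains is bookkeeping. For $k=3$ one has $\sum_j m_{j,1}=2n$ with $n\le6$: the two-$\delta_j{=}1$ branch forces $n=3$ with the two non-rectangular blocks equal to $21$ and the others equal to $111$, giving $21,21,111,111$; the one-$\delta_j{=}2$ branch, running $\mathbf m_0$ over $\{2^s1^2,3^s1,3^s2\}$ of order $\le6$ and solving for the three rectangular blocks, gives $22,22,22,211$ and $31,22,22,1111$. For $k=2$ one has $m_{0,1}+m_{1,1}+m_{2,1}=n\le12$; placing the non-rectangular block(s) in position $0$ (and $1$), the rectangular blocks contribute a divisor $a\mid n$ with $a\le n/2$, and running $\mathbf m_0$ over $2^s1$, $3^s1$, $3^s2$, $2^s1^2$ subject to the requirement that the remaining one or two divisors of $n$ sum to $n-m_{0,1}$ yields exactly $211,1111,1111$;\ $221,221,11111$;\ $32,11111,11111$;\ $222,222,2211$;\ $33,2211,111111$;\ $44,2222,22211$;\ $44,332,11111111$;\ $55,3331,22222$;\ $66,444,2222211$ (four of these being the $m=2$ members of the families of Example~\ref{ex:special}). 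Collecting the three cases and passing to $S_\infty$-orbits assembles the asserted list of thirteen.

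The genuinely delicate point is making the $k=2$ divisor bookkeeping exhaustive without overlooking a sporadic solution; the bound $\ord\mathbf m\le12$ together with ``a non-rectangular block has largest part $\le3$'' keeps the case list short, but this is where care is needed. It is also prudent to double-check on the thirteen tuples that each is indivisible and actually has $\idx=-2$ — both are forced by the derivation through \eqref{eq:idxn}, but a quick numerical verification is cheap insurance.
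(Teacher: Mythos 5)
Your proof is correct and its skeleton coincides with the paper's: both reduce to monotone indivisible representatives, use \eqref{eq:idxn} to force either $\sum_j\delta_j=2$ with $\sum_jm_{j,1}=(k-1)\ord\mathbf m$ or the degenerate case $\ord\mathbf m=2$, and then split according to the shape of the non-rectangular blocks — your classes $2^s1$, $\{2^s1^2,3^s1,3^s2\}$ are exactly the paper's cases (A)--(E). Where you diverge is in how each case is resolved. The paper rewrites the constraint via \eqref{eq:idxid} as an Egyptian-fraction identity such as $\tfrac{3}{3m+1}+\tfrac1{n_1}+\cdots+\tfrac1{n_k}=k-1$ and invokes Lemma~\ref{lem:fracsum} to bound $k$ and $m$, and — most notably — disposes of case (A) ($\mathbf m_0=2\cdots211$) by replacing $2\cdots211$ with $2\cdots22$ to land in $\mathcal K(0)$ or $\tfrac12\mathcal K(0)$, thereby harvesting six of the thirteen tuples (including the four $m=2$ members of \eqref{eq:Qsp}) directly from the already-established table $\bar{\mathcal K}(0)$. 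You instead run a uniform order-and-divisor enumeration under the bounds $n(k-2)\le6$, $(k-1)n\le6$ and $\ord\mathbf m\le12$; this is more mechanical but avoids both Lemma~\ref{lem:fracsum} and any appeal to the $\bar{\mathcal K}(0)$ classification. One small point in your favour that you flag as needing ``insurance'' is actually automatic: a divisible block $d\mathbf p$ has $\delta(d\mathbf p)=d^2\delta(\mathbf p)$, so a divisible tuple with $\sum_j\delta_j=2$ would have all blocks rectangular, which only occurs in your second case and yields the indivisible $11,11,11,11,11$; hence indivisibility never needs separate checking. Conversely, do make explicit in the two-non-rectangular-blocks branch that the rectangular blocks of odd order $n$ have largest part at most $n/3$ (not merely $n/2$), since that is what your bound $(k-1)n\le6$ relies on.
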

\begin{proof}
Let $\mathbf m\in\mathcal K(-2)\cap{\mathcal P}_{k+1}$ be 
monotone. Then \eqref{eq:idx01} and \eqref{eq:idxn} with 
$\idx\mathbf m=-2$ implies
$\sum (m_{j,1}-m_{j,\nu})m_{j,\nu}=0$ or $2$ 
and we have
the following 5 possibilities.

(A) \ $m_{0,1}\cdots m_{0,n_0} = 2\cdots211$
      and $m_{j,1}=m_{j,n_j}$ for $1\le j\le k$.

(B) \ $m_{0,1}\cdots m_{0,n_0} = 3\cdots31$
    and $m_{j,1}=m_{j,n_j}$ for $1\le j\le k$.

(C) \ $m_{0,1}\cdots m_{0,n_0} = 3\cdots32$
    and $m_{j,1}=m_{j,n_j}$ for $1\le j\le k$.

(D) \ $m_{i,1}\cdots m_{i,n_0} = 2\cdots21$
     and $m_{j,1}=m_{j,n_j}$ 
     for $0\le i\le 1<j\le k$.

(E) \ $m_{j,1}=m_{j,n_j}$ for $0\le j\le k$ and $\ord\mathbf m=2$.

\smallskip
Case (A). \ 
If $2\cdots211$ is replaced by $2\cdots22$, $\mathbf m$
is transformed into $\mathbf m'$ with $\idx\mathbf m'=0$.
If $\mathbf m'$ is indivisible, $\mathbf m'\in\mathcal K(0)$ and
$\mathbf m$ is $211,1^4,1^4$ or $33,2211,1^6$.
If $\mathbf m'$ is not indivisible, $\frac12\mathbf m'\in\mathcal K(0)$ and
$\mathbf m$ is one of the tuples given in \eqref{eq:Qsp} with $m=2$.

Put $m=n_0-1$ and examine the identity \eqref{eq:idxid}.

\smallskip
Case (B). \ 
$\frac3{3m+1}+\frac1{n_1}+\cdots+\frac1{n_k}
=k-1$.
Since $n_j\ge 2$, we have $\frac12k -1\le\frac{3}{3m+1}<1$
and $k\le 3$.

If $k=3$, we have $m=1$, $\ord\mathbf m=4$,
$\frac1{n_1}+\frac1{n_2}+\frac1{n_3}=\frac54$,
$\{n_1,n_2,n_3\}=\{2,2,4\}$ and 
$\mathbf m=31,22,22,1111$.

Assume $k=2$.  Then
$\frac1{n_1}+\frac1{n_2}=1-\frac{3}{3m+1}$ and Lemma~\ref{lem:fracsum}
implies $m\le 5$.  We have
$\frac1{n_1}+\frac1{n_2}=\frac{13}{16}$, $\frac{10}{13}$, $\frac{7}{10}$,
$\frac{4}{7}$ and $\frac{1}{4}$
according to $m=5$, $4$, $3$, $2$ and $1$, respectively.
Hence we have $m=3$, $\{n_1,n_2\}=\{2,5\}$ and $\mathbf m=3331,55,22222$.

\smallskip
Case (C). \ 
$\frac{3}{3m+2}+\frac1{n_1}+\cdots+\frac1{n_k}=k-1$.
Since $n_j\ge 2$, $\frac12k-1\le \frac{3}{3m+2}<1$
and $k\le 3$.
If $k=3$, then $m=1$, $\ord\mathbf m=5$ and 
$\frac1{n_1}+\frac1{n_2}+\frac1{n_3}=\frac75$, which never occurs.

Thus we have $k=2$, 
$\frac1{n_1}+\frac1{n_2}=1-\frac{3}{3m+2}$ and Lemma~\ref{lem:fracsum}
implies $m\le 5$.
We have $\frac1{n_1}+\frac1{n_2}=\frac{14}{17}$, 
$\frac{11}{14}$, $\frac{8}{11}$, $\frac{5}{8}$ 
and $\frac{2}{5}$ according to $m=5$, $4$, $3$, $2$ and $1$, respectively.
Hence we have $m=1$ and $n_1=n_2=5$ and
$\mathbf m=32,11111,11111$
or $m=2$ and $n_1=2$ and $n_2=8$ and $\mathbf m=332,44,11111111$.

\smallskip
Case (D). \ 
$\frac2{2m+1}+\frac2{2m+1}+\frac1{n_2}+\cdots+\frac1{n_k}=k-1$.
Since $n_j\ge3$ for $j\ge2$, we have
$k-1\le \frac32\frac{4}{2m+1}=\frac{6}{2m+1}$ and 
$m\le2$.
If $m=1$, then $k\le 3$ and $\frac1{n_2}+\frac1{n_3}=2-\frac43=\frac23$ and 
we have $\mathbf m=21,21,111,111$.
If $m=2$, then $k=2$, $\frac1{n_2}=1-\frac{4}{5}$ and
$\mathbf m=221,221,11111$.

\smallskip
Case (E). \ 
Since $m_{j,1}=1$ and \eqref{eq:idxn} means $-2=\sum_{j=0}^k 2m_{j,1}-4(k-1)$, 
we have $k=4$ and $\mathbf m=11,11,11,11,11$.
\end{proof}
By the aid of a computer we have the following tables.

\begingroup
\samepage
\centerline{Table of $\#\bar{\mathcal K}(p)$.}
\smallskip
\noindent
\begin{tabular}{|c|r|r|r|r|r|r|r|r|r|r|r|}\hline
index         & \ \,0\ & $-2$ & $-4$ & $-6$ & $-8$ & $-10$ & $-12$ & $-14$ & $-16$ & $-18$ & $-20$
\\ \hline   
$\#\bar{\mathcal K}(p)$
& \ \,4\ & 13 & 36 & 67 & 90 & 162 & 243 & 305 & 420 & 565 & 720
\\ \hline
$\#$ triplets & \ \,3\ & \ \,9&24& 44 & 56 & \ \,97& 144 & 163 & 223 & 291 & 342
\\ \hline
$\#$ 4-tuples & \ \,1\ & \ \,3& \ \,9& 17 & \ \,24 & \ \,45& \ \,68 & \ \,95 & 128 & 169 & 239
\\ \hline
\end{tabular}
\endgroup
\medskip

\if\RIMS
\noindent
{Table of 
$(\ord\mathbf m:\mathbf m)$ of $\bar{\mathcal K}(-4)$ ($*$ 
corresponds to \eqref{eq:Qsp} and $+$ means 
$\p_{max}(\mathbf m)\ne\mathbf m$))}
\vspace{-6pt}
\else
\centerline{Table of 
$(\ord\mathbf m:\mathbf m)$ of $\bar{\mathcal K}(-4)$ ($*$  
: \eqref{eq:Qsp}\quad $+$ : $\p_{max}(\mathbf m)\ne\mathbf m$))}
\fi
{\small\begin{verbatim}
 +2:11,11,11,11,11,11      3:111,21,21,21,21         4:22,22,22,31,31
 +3:111,111,111,21        +4:1111,22,22,22           4:1111,1111,31,31
  4:211,211,22,22          4:1111,211,22,31         *6:321,33,33,33
  6:222,222,33,51         +4:1111,1111,1111          5:11111,11111,311
  5:11111,2111,221         6:111111,222,321          6:111111,21111,33
  6:21111,222,222          6:111111,111111,42        6:222,33,33,42
  6:111111,33,33,51        6:2211,2211,222           7:1111111,2221,43
  7:1111111,331,331        7:2221,2221,331           8:11111111,3311,44
  8:221111,2222,44         8:22211,22211,44         *9:3321,333,333
  9:111111111,333,54       9:22221,333,441          10:1111111111,442,55 
 10:22222,3322,55         10:222211,3331,55         12:22221111,444,66
*12:33321,3333,66         14:2222222,554,77        *18:3333321,666,99
\end{verbatim}}

We express the root $\alpha_{\mathbf m}$ for 
$\mathbf m\in\bar{\mathcal K}(0)$ and $\bar{\mathcal K}(-2)$ 
using Dynkin diagram.
The circles in the diagram represent the simple roots in 
$\supp\alpha_{\mathbf m}$ and two circles are connected by a 
line if the inner product of the corresponding simple roots is not zero.
The number attached to a circle is the corresponding coefficient 
$n$ or $n_{j,\nu}$ in the expression \eqref{eq:Kazpart}.

For example, if $\mathbf m=11,11,11,11$, then 
$\alpha_{\mathbf m}=2\alpha_0+\alpha_{0,1}+\alpha_{1,2}
+\alpha_{2,2}+\alpha_{3,2}$, 
which corresponds to the first diagram in the following.
\smallskip

\if\RIMS
\else
\bigskip
\bigskip
\fi
\centerline{{\bf Table} \ $\bar{\mathcal K}(0)$}
\vspace{-.6cm}
\nopagebreak
{\small
\begin{gather*}
\begin{xy}
\ar@{-}               *++!D{1}  *\cir<4pt>{};
             (10,0)   *+!L+!D{2}*\cir<4pt>{}="A",
\ar@{-} "A"; (20,0)   *++!D{1}  *\cir<4pt>{},
\ar@{-} "A"; (10,-10) *++!L{1}  *\cir<4pt>{},
\ar@{-} "A"; (10,10)  *++!L{1}  *\cir<4pt>{},
\end{xy}
\quad
\begin{xy}
\ar@{-}               *++!D{2}  *\cir<4pt>{};
             (10,0)   *++!D{4}  *\cir<4pt>{}="A",
\ar@{-} "A"; (20,0)   *+!L+!D{6}*\cir<4pt>{}="B",
\ar@{-} "B"; (30,0)   *++!D{5}  *\cir<4pt>{}="C",
\ar@{-} "C"; (40,0)   *++!D{4}  *\cir<4pt>{}="D",
\ar@{-} "D"; (50,0)   *++!D{3}  *\cir<4pt>{}="E",
\ar@{-} "E"; (60,0)   *++!D{2}  *\cir<4pt>{}="F",
\ar@{-} "F"; (70,0)   *++!D{1}  *\cir<4pt>{},
\ar@{-} "B"; (20,10)  *++!L{3}  *\cir<4pt>{}
\end{xy}
\allowdisplaybreaks\\[-1.2cm]
\begin{xy}
\ar@{-}               *++!D{1}  *\cir<4pt>{};
             (10,0)   *++!D{2}  *\cir<4pt>{}="A",
\ar@{-} "A"; (20,0)   *++!D{3}  *\cir<4pt>{}="B",
\ar@{-} "B"; (30,0)   *+!L+!D{4}*\cir<4pt>{}="C",
\ar@{-} "C"; (40,0)   *++!D{3}  *\cir<4pt>{}="D",
\ar@{-} "D"; (50,0)  *++!D{2}   *\cir<4pt>{}="E",
\ar@{-} "E"; (60,0)  *++!D{1}   *\cir<4pt>{},
\ar@{-} "C"; (30,10)  *++!L{2}  *\cir<4pt>{}
\end{xy}
\quad
\begin{xy}
\ar@{-}               *++!D{1}  *\cir<4pt>{};
             (10,0)   *++!D{2}  *\cir<4pt>{}="O",
\ar@{-} "O"; (20,0)   *+!L+!D{3}*\cir<4pt>{}="A",
\ar@{-} "A"; (30,0)   *++!D{2}  *\cir<4pt>{}="B",
\ar@{-} "B"; (40,0)   *++!D{1}  *\cir<4pt>{}
\ar@{-} "A"; (20,10)  *++!L{2}  *\cir<4pt>{}="C",
\ar@{-} "C"; (20,20)  *++!L{1}  *\cir<4pt>{},
\end{xy}
\end{gather*}}
\centerline{{\bf Table } $\bar{\mathcal K}(-2)$}
\centerline{Dotted circles represent simple roots which are not orthogonal
to the root.}

\vspace{-.6cm}
\nopagebreak
{\small\begin{gather*}
\begin{xy}
\ar@{-}      (10,0)       *++!D{1}  *\cir<4pt>{};
             (0,0)        *+!D+!L{2}*{\cdot}*\cir<4pt>{}="A",
\ar@{-} "A"; (3.09,9.51)  *++!L{1}  *\cir<4pt>{};
\ar@{-} "A"; (-8.09,5.88) *++!D{1}  *\cir<4pt>{};
\ar@{-} "A"; (3.09,-9.51) *++!L{1}  *\cir<4pt>{};
\ar@{-} "A"; (-8.09,-5.88)*++!D{1}  *\cir<4pt>{};
\end{xy}\qquad
\begin{xy}
\ar@{-}               *++!D{2}  *\cir<4pt>{};
             (10,0)   *+!L+!D{4}*\cir<4pt>{}="A",
\ar@{-} "A"; (20,0)   *++!D{2}  *{\cdot} *\cir<4pt>{}="B",
\ar@{-} "B"; (30,0)   *++!D{1}  *\cir<4pt>{},
\ar@{-} "A"; (10,-10) *++!L{2}  *\cir<4pt>{},
\ar@{-} "A"; (10,10)  *++!L{2}  *\cir<4pt>{}
\end{xy}\qquad\ 
\begin{xy}
\ar@{-}               *++!D{1}  *\cir<4pt>{};
             (10,0)   *++!D{2}  *\cir<4pt>{}="A",
\ar@{-} "A"; (20,0)   *+!L+!D{3}*\cir<4pt>{}="B",
\ar@{-} "B"; (30,0)   *++!D{2}  *\cir<4pt>{}="C",
\ar@{-} "C"; (40,0)   *++!D{1}  *\cir<4pt>{},
\ar@{-} "B"; (20,-10) *++!L{1}  *{\cdot}*\cir<4pt>{}="C",
\ar@{-} "B"; (20,10)  *++!L{1}  *{\cdot}*\cir<4pt>{}
\end{xy}
\allowdisplaybreaks\\[-1cm]
\begin{xy}
\ar@{-}               *++!D{1}  *{\cdot}*\cir<4pt>{};
             (10,0)   *+!L+!D{4}*\cir<4pt>{}="A",
\ar@{-} "A"; (20,0)   *++!D{3}  *\cir<4pt>{}="B",
\ar@{-} "B"; (30,0)   *++!D{2}  *\cir<4pt>{}="C",
\ar@{-} "C"; (40,0)   *++!D{1}  *\cir<4pt>{},
\ar@{-} "A"; (10,-10) *++!L{2}  *\cir<4pt>{},
\ar@{-} "A"; (10,10)  *++!L{2}  *\cir<4pt>{}
\end{xy}\qquad
\begin{xy}
\ar@{-}               *++!D{2}  *\cir<4pt>{};
             (10,0)   *++!D{4}  *\cir<4pt>{}="A",
\ar@{-} "A"; (20,0)   *+!L+!D{6}*\cir<4pt>{}="B",
\ar@{-} "B"; (30,0)   *++!D{4}  *\cir<4pt>{}="C",
\ar@{-} "C"; (40,0)   *++!D{2}  *{\cdot}*\cir<4pt>{}="D",
\ar@{-} "D"; (50,0)   *++!D{1}  *\cir<4pt>{},
\ar@{-} "B"; (20,10)  *++!L{4}  *\cir<4pt>{}="F",
\ar@{-} "F"; (20,20)  *++!L{2}  *\cir<4pt>{},
\end{xy}\allowdisplaybreaks\\[-1.1cm]
\begin{xy}
\ar@{-}               *++!D{1}  *\cir<4pt>{};
             (10,0)   *++!D{2}  *\cir<4pt>{}="A",
\ar@{-} "A"; (20,0)   *++!D{3}  *\cir<4pt>{}="B",
\ar@{-} "B"; (30,0)   *+!L+!D{4}*\cir<4pt>{}="C",
\ar@{-} "C"; (40,0)   *++!D{3}  *\cir<4pt>{}="D",
\ar@{-} "D"; (50,0)   *++!D{2}  *\cir<4pt>{}="E",
\ar@{-} "E"; (60,0)   *++!D{1}  *\cir<4pt>{},
\ar@{-} "C"; (30,10)  *++!L{2}  *{\cdot}*\cir<4pt>{}="F",
\ar@{-} "F"; (30,20)  *++!L{1}  *\cir<4pt>{},
\end{xy}
\allowdisplaybreaks\\
\begin{xy}
\ar@{-}               *++!D{1}  *{\cdot}*\cir<4pt>{};
             (10,0)   *++!D{3}  *\cir<4pt>{}="A",
\ar@{-} "A"; (20,0)   *+!L+!D{5}*\cir<4pt>{}="B",
\ar@{-} "B"; (30,0)   *++!D{4}  *\cir<4pt>{}="C",
\ar@{-} "C"; (40,0)   *++!D{3}  *\cir<4pt>{}="D",
\ar@{-} "D"; (50,0)   *++!D{2}  *\cir<4pt>{}="E",
\ar@{-} "E"; (60,0)   *++!D{1}  *\cir<4pt>{},
\ar@{-} "B"; (20,10)  *++!L{3}  *\cir<4pt>{}="F",
\ar@{-} "F"; (20,20)  *++!L{1}  *{\cdot}*\cir<4pt>{},
\end{xy}\allowdisplaybreaks\\
\begin{xy}
\ar@{-}               *++!D{2}  *\cir<4pt>{};
             (10,0)   *++!D{4}  *\cir<4pt>{}="A",
\ar@{-} "A"; (20,0)   *++!D{6}  *\cir<4pt>{}="B",
\ar@{-} "B"; (30,0)   *+!L+!D{8}*\cir<4pt>{}="C",
\ar@{-} "C"; (40,0)   *++!D{6}  *\cir<4pt>{}="D",
\ar@{-} "D"; (50,0)   *++!D{4}  *\cir<4pt>{}="E",
\ar@{-} "E"; (60,0)   *++!D{2}  *{\cdot}*\cir<4pt>{}="F",
\ar@{-} "F"; (70,0)   *++!D{1}  *\cir<4pt>{}.
\ar@{-} "C"; (30,10)  *++!L{4}  *\cir<4pt>{},
\end{xy}\allowdisplaybreaks\\
\begin{xy}
\ar@{-}               *++!D{1}   *{\cdot}*\cir<4pt>{};
             (10,0)   *++!D{4}   *\cir<4pt>{}="A",
\ar@{-} "A"; (20,0)   *++!D{7}   *\cir<4pt>{}="B",
\ar@{-} "B"; (30,0)   *+!L+!D{10}*\cir<4pt>{}="C",
\ar@{-} "C"; (40,0)   *++!D{8}   *\cir<4pt>{}="D",
\ar@{-} "D"; (50,0)   *++!D{6}   *\cir<4pt>{}="E",
\ar@{-} "E"; (60,0)   *++!D{4}   *\cir<4pt>{}="F",
\ar@{-} "F"; (70,0)   *++!D{2}   *\cir<4pt>{}.
\ar@{-} "C"; (30,10)  *++!L{5}   *\cir<4pt>{},
\end{xy}\allowdisplaybreaks\\
\begin{xy}
\ar@{-}               *++!D{1}  *\cir<4pt>{};
             (10,0)   *++!D{2}  *\cir<4pt>{}="A",
\ar@{-} "A"; (20,0)   *++!D{3}  *\cir<4pt>{}="B",
\ar@{-} "B"; (30,0)   *++!D{4}  *\cir<4pt>{}="C",
\ar@{-} "C"; (40,0)   *+!L+!D{5}*\cir<4pt>{}="D",
\ar@{-} "D"; (50,0)   *++!D{4}  *\cir<4pt>{}="E",
\ar@{-} "E"; (60,0)   *++!D{3}  *\cir<4pt>{}="F",
\ar@{-} "F"; (70,0)   *++!D{2}  *\cir<4pt>{}="G".
\ar@{-} "G"; (80,0)   *++!D{1}  *\cir<4pt>{}.
\ar@{-} "D"; (40,10)  *++!L{2}  *{\cdot}*\cir<4pt>{},
\end{xy}\allowdisplaybreaks\\
\begin{xy}
\ar@{-}               *++!D{1}  *\cir<4pt>{};
             (10,0)   *++!D{2}  *{\cdot}*\cir<4pt>{}="A",
\ar@{-} "A"; (20,0)   *++!D{4}  *\cir<4pt>{}="B",
\ar@{-} "B"; (30,0)   *+!L+!D{6}*\cir<4pt>{}="C",
\ar@{-} "C"; (40,0)   *++!D{5}  *\cir<4pt>{}="D",
\ar@{-} "D"; (50,0)   *++!D{4}  *\cir<4pt>{}="E",
\ar@{-} "E"; (60,0)   *++!D{3}  *\cir<4pt>{}="F",
\ar@{-} "F"; (70,0)   *++!D{2}  *\cir<4pt>{}="G".
\ar@{-} "G"; (80,0)   *++!D{1}  *\cir<4pt>{}.
\ar@{-} "C"; (30,10)  *++!L{3}  *\cir<4pt>{}
\end{xy}\allowdisplaybreaks\\
\begin{xy}
\ar@{-}               *++!D{4}   *\cir<4pt>{};
             (10,0)   *++!D{8}   *\cir<4pt>{}="A",
\ar@{-} "A"; (20,0)   *+!L+!D{12}*\cir<4pt>{}="B",
\ar@{-} "B"; (30,0)   *++!D{10}  *\cir<4pt>{}="C",
\ar@{-} "C"; (40,0)   *++!D{8}   *\cir<4pt>{}="D",
\ar@{-} "D"; (50,0)   *++!D{6}   *\cir<4pt>{}="E",
\ar@{-} "E"; (60,0)   *++!D{4}   *\cir<4pt>{}="F",
\ar@{-} "F"; (70,0)   *++!D{2}   *{\cdot}*\cir<4pt>{}="G".
\ar@{-} "G"; (80,0)   *++!D{1}   *\cir<4pt>{}.
\ar@{-} "B"; (20,10)  *++!L{6}   *\cir<4pt>{},
\end{xy}\allowdisplaybreaks\\
\begin{xy}
\ar@{-}               *++!D{2}  *{\cdot}*\cir<4pt>{};
             (10,0)   *++!D{5}  *\cir<4pt>{}="A",
\ar@{-} "A"; (20,0)   *+!L+!D{8}*\cir<4pt>{}="B",
\ar@{-} "B"; (30,0)   *++!D{7}  *\cir<4pt>{}="C",
\ar@{-} "C"; (40,0)   *++!D{6}  *\cir<4pt>{}="D",
\ar@{-} "D"; (50,0)   *++!D{5}  *\cir<4pt>{}="E",
\ar@{-} "E"; (60,0)   *++!D{4}  *\cir<4pt>{}="F",
\ar@{-} "F"; (70,0)   *++!D{3}  *\cir<4pt>{}="G".
\ar@{-} "G"; (80,0)   *++!D{2}  *\cir<4pt>{}="H".
\ar@{-} "H"; (90,0)   *++!D{1}  *\cir<4pt>{}.
\ar@{-} "B"; (20,10)  *++!L{4}  *\cir<4pt>{},
\end{xy}
\end{gather*}}

\section{Connection problem}\label{sec:C}
Fix a tuple $\mathbf m=\bigl(m_{j,\nu}\bigr)_{\substack{j=0,\dots,k\\ \nu=1,\dots,n_j}}\in\mathcal P_{k+1}^{(n)}$ in this section.
For complex numbers $\lambda_{j,\nu}\in\mathbb C$ and $\mu\in\mathbb C$ we put
\begin{align*}
 \{\lambda_{\mathbf m}\}&:=
 \begin{Bmatrix}
   [\lambda_{0,1}]_{(m_{0,1})}     
     &\cdots & [\lambda_{k,1}]_{(m_{k,1})}\\
    \vdots                         
   &\vdots & \vdots\\
   [\lambda_{0,n_0}]_{(m_{0,n_0})} 
     &\cdots & [\lambda_{k,n_k}]_{(m_{k,n_k})}
 \end{Bmatrix},\quad
 [\mu]_{(p)}:=
  \begin{pmatrix}
   \mu\\ \mu+1\\ \vdots\\ \mu+p-1
  \end{pmatrix}.
\end{align*}
We may identify $\{\lambda_{\mathbf m}\}$ with an element of 
$M(n,k+1,\mathbb C)$.
\begin{dfn}
A rigid tuple $\mathbf m\in\mathcal R_{k+1}$ 
is a \textsl{rigid sum} of $\mathbf m'$ and $\mathbf m''$ if
\begin{equation}
  \mathbf m=\mathbf m'+\mathbf m''\text{ \ and \ }
  \mathbf m',\ \mathbf m''\in\mathcal R_{k+1}
\end{equation}
and we express this by $\mathbf m=\mathbf m'\oplus\mathbf m''$,
which we call a \textsl{rigid decomposition} of $\mathbf m$.
\end{dfn}
\begin{thm}\label{thm:c}
Fix $k+1$ points 
$\{z_0,\dots,z_k\} \subset\mathbb C\cup\{\infty\}$
and a rigid tuple $\mathbf m\in\mathcal R_{k+1}$.
Assume $\lambda_{j,\nu}\in\mathbb C$ are generic under the Fuchs relation
$|\{ \lambda_{\mathbf m}\}|=0$ with
\begin{align}
 |\{ \lambda_{\mathbf m}\}|&
   :=\sum_{j=0}^k\sum_{\nu=0}^{n_j}m_{j,\nu}\lambda_{j,\nu}
    - \ord\mathbf m+1.
\end{align}

{\rm i)\ } 
There uniquely exists a single Fuchsian differential equation $Pu=0$ 
of order $n$ with regular singularities at 
$\{z_0,\dots,z_k\}\subset\mathbb C\cup\{\infty\}$
such that the set of exponents at $z_j\subset\mathbb C\cup\{\infty\}$ 
is equal to that of components of the $(j+1)$-th column of 
$\{\lambda_{\mathbf m}\}$ and moreover that the local monodromies 
are semisimple at $z_j$ for $j=0,\dots,k$.

\smallskip
{\rm ii)\ }
Assume $k=2$, $m_{0,n_0}=m_{1,n_1}=1$ and
$m_{j,\nu} > 0$ for $\nu=1,\ldots,n_j$ and $j=0,1,2$.
Let $c(\lambda_{0,n_0}\!\rightsquigarrow\!\lambda_{1,n_1})$ 
denote the connection coefficient from the normalized local solution 
of $Pu=0$ in i) corresponding to the exponent $\lambda_{0,n_0}$ at $z_0$
to the normalized local solution corresponding to 
the exponent $\lambda_{1,n_1}$ at $z_1$. Then
\begin{align}\label{eq:connection}
 c(\lambda_{0,n_0}\!\rightsquigarrow\!\lambda_{1,n_1})
 &=\frac
  {\displaystyle\prod_{\nu=1}^{n_0-1} 
    \Gamma\bigl(\lambda_{0,n_0}-\lambda_{0,\nu}+1\bigr)
   \cdot\prod_{\nu=1}^{n_1-1}
    \Gamma\bigl(\lambda_{1,\nu}-\lambda_{1,n_1}\bigr)
  }
  {\displaystyle\prod_{\substack{\mathbf m'\oplus\mathbf m''=\mathbf m\\
                    m'_{0,n_0}=m''_{1,n_1}=1}}
    \Gamma\bigl(|\{\lambda_{\mathbf m'}\}|\bigr)
  },\allowdisplaybreaks\\
  \sum_{\substack{\mathbf m'\oplus\mathbf m''=\mathbf m\\
    m'_{0,n_0}=m''_{1,n_1}=1}}
   \!\!\!\!\!\! m'_{j,\nu}
  &= (n_1-1)m_{j,\nu}-\delta_{j,0}(1-n_0\delta_{\nu,_{n_0}})
   +\delta_{j,1}(1-n_1\delta_{\nu,_{n_1}})\label{eq:concob}\\[-.5cm]
  &\hspace{4.5cm}(0\le j\le 2,\ 1\le\nu\le n_j).\notag
\end{align}
\end{thm}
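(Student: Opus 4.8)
The plan is to reduce the whole statement to a calculation with middle convolutions, using the fact (Theorem~\ref{thm:IrReal}~i)) that a rigid tuple $\mathbf m$ is obtained from $\mathbf 1\in\mathcal P^{(1)}_{k+1}$ by a finite chain of operations $mc_\mu$ together with additions $M_{\mu'}$, and that these operations are realized on the level of solutions by Euler-type integral transforms (Riemann--Liouville integrals) and the corresponding gauge shifts $u\mapsto\prod(z-z_j)^{\tau_j}u$. For part~i), existence and uniqueness of the single scalar Fuchsian equation $Pu=0$ with the prescribed semisimple local data is the standard fact that a rigid local system on $\mathbb P^1\setminus\{z_0,\dots,z_k\}$ exists and is unique up to isomorphism (Katz), and that an irreducible Fuchsian \emph{system} of the form \eqref{eq:Fuchs} is cyclic, hence equivalent to a single equation; the exponent bookkeeping is exactly the Fuchs relation $|\{\lambda_{\mathbf m}\}|=0$, which is $\idx(\mathbf m,\mathbf m)=2$ rewritten. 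I would state this, cite \cite{Kz} and the normal form of \S\ref{S:M}, and move on.

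For part~ii), the strategy is induction on $\ord\mathbf m$ along a reduction sequence $\mathbf m\to\p_{max}(\mathbf m)$. First I would check the base case $n=1$ (or the smallest non-trivial case, the hypergeometric ${}_2F_1$ with $\mathbf m=11,11,11$), where \eqref{eq:connection} is the classical Gauss connection formula $c=\Gamma(c)\Gamma(c-a-b)/\Gamma(c-a)\Gamma(c-b)$-type expression, and one verifies directly that the product over rigid decompositions $\mathbf m'\oplus\mathbf m''=\mathbf m$ with $m'_{0,n_0}=m''_{1,n_1}=1$ reproduces exactly the two Gamma factors in the denominator. Then, for the inductive step, I would track how each ingredient transforms under one application of $mc_\mu$ (with $\mu_j=\lambda_{j,\ell_j}$ chosen as in Remark after Theorem~\ref{thm:mc}): the local solution normalized at the exponent $\lambda_{0,n_0}$ at $z_0$ and the one at $\lambda_{1,n_1}$ at $z_1$ are carried to the corresponding normalized local solutions of the transformed equation, and the connection coefficient picks up an explicit ratio of Gamma functions coming from the Euler integral $\int_{z_1}^{z}(z-t)^{\lambda-1}(\cdots)\,dt$ evaluated asymptotically at the two endpoints (a beta-integral computation). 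Simultaneously, by Proposition~\ref{prop:Kac} and \eqref{eq:M2K}, the map $\mathbf m'\mapsto\p_{max}(\mathbf m')$ induces a bijection between rigid decompositions of $\mathbf m$ (with the endpoint constraints $m'_{0,n_0}=m''_{1,n_1}=1$) and those of $\mathbf m_{\mathrm{red}}:=\p_{max}(\mathbf m)$, and one checks that $|\{\lambda_{\mathbf m'}\}|$ is shifted exactly by the amount matched by the new numerator Gamma factors. Combining the transformation of the left-hand side with the transformation of the right-hand side, the formula for $\mathbf m$ follows from that for $\mathbf m_{\mathrm{red}}$.

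The identity \eqref{eq:concob} is purely combinatorial and I would prove it separately, by summing the defining relation $\mathbf m=\mathbf m'+\mathbf m''$ over all admissible rigid decompositions and using that rigidity $\idx(\mathbf m',\mathbf m')=\idx(\mathbf m'',\mathbf m'')=2$ together with $\idx(\mathbf m',\mathbf m'')=1$ (which is forced, since $\idx(\mathbf m,\mathbf m)=2=2+2-2\cdot\idx(\mathbf m',\mathbf m'')$ would otherwise fail) pins down the number of such decompositions and the column sums of $\sum\mathbf m'$; the coefficients $(n_1-1)$, $\delta_{j,0}$, $\delta_{j,1}$ drop out of counting how many decompositions have a given part equal to $1$ in the last row. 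Alternatively one reads \eqref{eq:concob} off the root-theoretic picture of Remark~\ref{rem:KM}: $\alpha_{\mathbf m'}$ ranges over positive real roots below $\alpha_{\mathbf m}$ with prescribed support near $\alpha_0$, and the sum of these roots is computed inside the Kac--Moody root lattice.

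The main obstacle I expect is the inductive step for the \emph{connection coefficient} itself: one must show that the normalization constants of the local solutions (the "normalized" in "normalized local solution") transform under middle convolution precisely so that only Gamma factors — and exactly those displayed — are produced, with no extra algebraic factors surviving. This requires a careful asymptotic analysis of the Euler integral transform at both singular points $z_0$ and $z_1$, keeping track of which branch of the multivalued kernel contributes and of the $\Gamma$-factors coming from $\int_0^1 t^{a-1}(1-t)^{b-1}dt=B(a,b)$; the genericity of the $\lambda_{j,\nu}$ is used here to guarantee that the relevant local solutions are non-degenerate and that no resonance produces logarithmic terms. Handling the addition operation $M_{\mu'}$ is comparatively easy (it only multiplies solutions by powers of $z-z_j$, shifting exponents and hence the arguments of all $\Gamma$'s uniformly), so the real work is isolating the beta-integral contribution of $mc_{max}$ and matching it term-by-term against the change in the right-hand side of \eqref{eq:connection}.
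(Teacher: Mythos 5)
You cannot be checked against the paper's own argument here, because the paper contains none: Remark iii) immediately following Theorem~\ref{thm:c} states that ``a proof of this theorem and related results will be given in another paper'' and, pointedly, that the intended proof ``is a generalization of that of Gauss summation formula \dots\ which doesn't use integral representations of the solutions.'' Your route --- inducting along $\mathbf m\to\p_{max}(\mathbf m)$ and extracting the Gamma factors from the asymptotics of the Euler integral realizing $mc_\mu$ --- is therefore explicitly \emph{not} the author's announced method. That by itself is not a defect (an integral-representation proof of such formulas is a legitimate alternative, in the spirit of \cite{Le} and \cite{OTY} for $H_n$), but it means your proposal must stand on its own, and as written it is a program rather than a proof: the step you yourself flag as ``the main obstacle'' --- that the normalized local solutions at $z_0$ and $z_1$ transform under $mc_{max}$ with a ratio of exactly the displayed Gamma factors and nothing else --- is the entire content of \eqref{eq:connection}, and no part of that beta-integral computation is carried out.

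There is also a concrete false step in the inductive bookkeeping. You claim $\mathbf m'\mapsto\p_{max}(\mathbf m')$ gives a \emph{bijection} between the rigid decompositions of $\mathbf m$ with $m'_{0,n_0}=m''_{1,n_1}=1$ and those of $\p_{max}(\mathbf m)$. This contradicts \eqref{eq:numdec}: the number of such decompositions is $n_0+n_1-2$, which is not preserved by the reduction (for $H_n$ there are $n$ of them, for $\p_{max}(H_n)=H_{n-1}$ only $n-1$, as the worked example after the theorem shows). The mismatch is not a technicality --- it is precisely how the denominator of \eqref{eq:connection} acquires a new factor at each reduction step, so the correct statement must identify which decomposition is created or destroyed and match it against the new Gamma factor; without that, the induction does not close. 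Finally, in your argument for \eqref{eq:concob} the sign is wrong: from $\idx(\mathbf m'+\mathbf m'')=\idx\mathbf m'+\idx\mathbf m''+2\idx(\mathbf m',\mathbf m'')$ one gets $\idx(\mathbf m',\mathbf m'')=-1$, not $+1$ (consistent with the line $(\alpha_{\mathbf m}|\alpha_{\mathbf m'})=-1$ in the table of Remark~\ref{rem:KM}); and in any case \eqref{eq:concob} is an entrywise identity for $\sum m'_{j,\nu}$, which is strictly stronger than the count of decompositions that your $\idx$ computation yields, so this part too remains unproved.
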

\begin{rem} 
{\rm i)\ } Putting $(j,\nu)=(0,n_0)$ in \eqref{eq:concob} or
considering the sum $\sum_\nu$ for \eqref{eq:concob} with $j=1$, we have
\begin{gather}
 \#\{\mathbf m'\in\mathcal R_3\,;\,\mathbf m'\oplus\mathbf m''=\mathbf m\text{ \ with \ }
m'_{0,n_0}=m''_{0,n_1}=1\} = n_0+n_1-2,\label{eq:numdec}\\
 \sum_{\substack{\mathbf m'\oplus\mathbf m''=\mathbf m\\
    m'_{0,n_0}=m''_{1,n_1}=1}}
   \!\!\!\!\!\! \ord\mathbf m'=(n_1-1)\ord\mathbf m.
\end{gather}

{\rm ii)\ } We may regard $\{\lambda_{\mathbf m}\}$ as a Riemann scheme
of the Fuchsian equation with the condition that the local monodromies at
the singular points are semisimple for generic $\lambda_{j,\nu}$ under the
Fuchs condition. The equation for general $\lambda_{j,\nu}$ is
defined by the analytic continuation.
The corresponding Riemann scheme will be denoted by $P\{\lambda_{\mathbf m}\}$.

\smallskip
{\rm iii)\ }
A proof of this theorem and related results will be given
in another paper.
The proof is a generalization of that of Gauss summation formula for 
Gauss hypergeometric series due to Gauss, which doesn't use integral 
representations of the solutions.

\smallskip
{\rm iv)\ }  In the theorem the condition $k=2$ means that there exists
no geometric moduli in the Fuchsian equation and
we may assume $(z_0,z_1,z_2)=(0,1,\infty)$.  
By the transformation of the solutions 
$u\mapsto z^{-\lambda_{0,n_0}}(1-z)^{-\lambda_{1,n_1}}u$
we may moreover assume $\lambda_{0,n_0}=\lambda_{1,n_1}=0$.
Then the meaning of ``normalized local solution" is clear
under the condition $m_{0,n_0}=m_{1,n_1}=1$.

\smallskip
{\rm v)\ } By the aid of a computer the author obtained the table of the 
concrete connection coefficients \eqref{eq:connection} for 
$\mathbf m\in\mathcal R_3$ satisfying $\ord\mathbf m\le 40$ 
together with checking \eqref{eq:concob}, 
which contains 4,111,704 independent cases.
\end{rem}
\begin{exa}[$H_n:$ hypergeometric family]
The Fuchsian differential equation of hypergeometric family
of order $n$ has the spectral type $\mathbf m=(1^n,n-11,1^n)$.
Its Riemann scheme is 
\begin{equation}
P\begin{Bmatrix}
     \lambda_{0,1} & \ [\lambda_{1,1}]_{(n-1)}\   & \lambda_{2,1}\\
     \vdots        &              & \vdots \\
     \lambda_{0,n-1}&  &\lambda_{2,n-1}\\
     \lambda_{0,n}  & \lambda_{1,2} &\lambda_{2,n}
    \end{Bmatrix}
\end{equation}
with complex numbers $\lambda_{j,\nu}$ satisfying the Fuchs relation
\begin{equation}
  \sum_{\nu}(\lambda_{0,\nu}+\lambda_{2,\nu})
  +(n-1)\lambda_{1,1}+\lambda_{1,2}=n-1.
\end{equation}
It follows from \eqref{eq:numdec} that there are $n$ rigid 
decompositions $\mathbf m=\mathbf m'\oplus\mathbf m''$ of $\mathbf m$ with 
$m'_{0,n}=m''_{1,2}=1$ and they are\\[-15pt]
\begin{align*}
1\cdots1\overline{1}\,,\,n-1\underline{1}\,,\,1\cdots1 &=
0\cdots0\overline{1}\,,\,\ \ 1\ \ \ \,\underline{0}\,,\,0\cdots0\overset{\underset{\smallsmile}i}10\cdots0\\[-5pt]
    &\,\oplus 1\cdots1\overline{0}\,,\,n-2\underline{1}\,,\,1\cdots101\cdots1
    \qquad(i=1,\dots,n),
\end{align*}
which are symbolically expressed by $H_n=H_1\oplus H_{n-1}$.
Then the formula \eqref{eq:connection} implies
\begin{align}
c(\lambda_{0,n}\rightsquigarrow\lambda_{1,2})
&=\frac{\displaystyle\prod_{i=1}^{n-1}
  \Gamma({\lambda_{0,n}}-\lambda_{0,i}+1)
   \cdot\Gamma(\lambda_{1,1}-{\lambda_{1,2}})}
  {\displaystyle\prod_{i=1}^n\Gamma({\lambda_{0,n}}+\lambda_{1,1}+\lambda_{2,i})}
\intertext{and}\notag\\[-38pt]
  c(\lambda_{1,2}\rightsquigarrow\lambda_{0,n})
 &=\frac{\Gamma\bigl(\lambda_{1,2}-\lambda_{1,1}+1\bigr)\cdot
   \displaystyle\prod_{i=1}^{n-1}\Gamma\bigl(\lambda_{0,i}-\lambda_{0,n}
  \bigr)}
  {\displaystyle\prod_{i=1}^n
  \Gamma\bigl(
   \left|\begin{Bmatrix}
    (\lambda_{0,\nu})_{1\le\nu\le n-1}
   & \ [\lambda_{1,1}]_{(n-2)}\  & 
   & (\lambda_{2,\nu})_{\substack{1\le\nu\le n\\ \nu\ne i}}\\[-4pt]
   & \lambda_{1,2}
   \end{Bmatrix}\right|
%
  \bigr)}.
\end{align}
Here we denote
\[
  (\mu_\nu)_{1\le\nu\le n}
  =\left(\begin{smallmatrix}\mu_1\\ \mu_2\\ \vdots\\ \mu_n\end{smallmatrix}\right)\in\mathbb C^n
  \text{\ \ and \ \ }
  (\mu_\nu)_{\substack{1\le\nu\le n\\ \nu\ne i}}
  =\left(\begin{smallmatrix}\mu_1\\ \vdots\\ \mu_{i-1}\\ \mu_{i+1}\\ \vdots\\ \mu_n\
   \end{smallmatrix}\right)\in\mathbb C^{n-1}
\]
for complex numbers $\mu_1,\dots,\mu_n$.
In the same way as above we have 
\begin{equation}
\begin{split}
c(\lambda_{0,n}\rightsquigarrow\lambda_{2,n})
&=\prod_{i=1}^{n-1}\frac{
    \Gamma(\lambda_{2,i}-\lambda_{2,n})}
  {\Gamma\bigl(
   \left|\begin{Bmatrix}
     \lambda_{0,n}
   &\ \lambda_{1,1}\ 
   & \lambda_{2,i}
   \end{Bmatrix}\right|
   \bigr)}\\
 &\quad\cdot\prod_{i=1}^{n-1}\frac{
    \Gamma(\lambda_{0,n}-\lambda_{0,i}+1)}
  {\Gamma(
   \left|\begin{Bmatrix}
    (\lambda_{0,\nu})_{\substack{1\le\nu\le n\\ \nu\ne i}}
   &\ [\lambda_{1,1}]_{(n-2)}\ & 
   & (\lambda_{2,\nu})_{1\le \nu\le n-1})\\[-4pt]
      & \lambda_{1,2}
   \end{Bmatrix}\right|
   )}
\end{split}
\end{equation}
by the rigid decompositions\\[-17pt]
\begin{align*}
 1\cdots1\overline{1}\,,\,n-11\,,\,1\cdots1\underline{1}
 &\!=0\cdots0\overline{1}\,,\,\ \ \ 1\,\ \ 0\,,\,0\ldots0\overset{\underset{\smallsmile}i}10\cdots0\underline{0}
\\[-8pt]
 &\oplus1\cdots1\overline{0}\,,\,n-21\,,\,1\cdots101\cdots1\underline{1}\\
 &\!=1\cdots1\overset{\underset\smallsmile i}01\cdots1\overline{1}\,,\,n-21\,,\,1\cdots1\underline{0}
\\[-8pt]
 &\oplus0\ldots010\cdots0\overline{0}\,,\,\ \ \ 1\ \ \,0\,,\,0\cdots0\underline{1}\qquad(i=1,\dots,n-1).
\end{align*}

The generalized hypergeometric series
\[
  {}_nF_{n-1}(\alpha_1,\dots,\alpha_n,\beta_1,\dots,\beta_{n-1};z) 
  = \displaystyle\sum_{k=0}^\infty
 \frac{(\alpha_1)_{k}\cdots(\alpha_n)_k}
                        {(\beta_1)_k\cdots(\beta_{n-1})_k(1)_k}z^k
\]
is a solution of the differential equation
\[
\Bigl(\prod_{j=1}^{n-1}(z\frac d{dz}+\beta_j)\cdot\frac d{dz}
  -\prod_{j=1}^{n}(z\frac d{dz}+\alpha_j)\Bigr)u=0
\]
with the Riemann scheme
\[
P\begin{Bmatrix}
{z=0} &{1}&{\infty}\\
 1-\beta_1     & \ [0]_{(n-1)}\  &\alpha_1\\
 \vdots        &          &\vdots & {;\ z}\\
 1-\beta_{n-1} &              &\alpha_{n-1}\\
 0 & -\beta_n   &\alpha_n
\end{Bmatrix}
\quad\text{with \ }
\sum_{\nu=1}^n \alpha_\nu = \sum_{\nu=1}^n \beta_\nu.
\]
This is the Gauss hypergeometric sereis when $n=2$. Here we denote
\[
  (\gamma)_k=\gamma(\gamma+1)\cdots(\gamma+k-1)
\]
for $\gamma\in\mathbb C$ and $k=1,2,\ldots$ and $(\gamma)_0=1$.
Hence by putting
\begin{gather*}
 \lambda_{0,\nu}=1-\beta_\nu\ \ (1\le\nu\le n-1),\ \ \lambda_{0,n}=0,\ \ 
 \lambda_{1,1}=0,\\ \lambda_{1,2}=-\beta_n
 \text{ \ and \ }\lambda_{2,i}=\alpha_i\ \ (1\le i\le n)
\end{gather*}
we have
\begin{align*}
c(\lambda_{0,n}\rightsquigarrow\lambda_{1,2})
&=\displaystyle\prod_{i=1}^n\frac{\Gamma(\beta_i)}
  {\Gamma(\alpha_i)}\\
&=\lim_{x\to 1-0}(1-x)^{\beta_n}
  {}_nF_{n-1}(\alpha,\beta;x)
  \qquad(\operatorname{Re} \beta_n > 0),\\
  c(\lambda_{1,2}\rightsquigarrow\lambda_{0,n})
 &=\prod_{i=1}^n\frac{\Gamma(1-\beta_i)}{\Gamma(1-\alpha_i)},
\quad
c(\lambda_{0,n}\rightsquigarrow\lambda_{2,n})
=\prod_{i=1}^{n-1}\frac{\Gamma(\beta_i)\Gamma(\alpha_i-\alpha_n)}
   {\Gamma(\alpha_i)\Gamma(\beta_i-\alpha_n)}.
\end{align*}
These connection coefficients are calculated by Levelt \cite{Le} 
and Okubo et al \cite{OTY}.
\end{exa}
\begin{exa}[$EO_{2m}:$ even family]
Let $m$ be a positive integer.
The single Fuchsian differential equation whose Riemann scheme is
\begin{equation}
P\begin{Bmatrix}
     \lambda_{0,1} & [\lambda_{1,1}]_{(m)}   & [\lambda_{2,1}]_{(m)}\\
     \vdots        &\ [\lambda_{1,2}]_{(m-1)}\  & [\lambda_{2,2}]_{(m)}\\
     \lambda_{0,2m}& \lambda_{1,3}
    \end{Bmatrix}
\end{equation}
with the Fuchs relation
\begin{equation}
\sum_{\nu=1}^{2m}\lambda_{0,\nu}
+m\lambda_{1,1}+(m-1)\lambda_{1,2}+\lambda_{1,3}
+m\lambda_{2,1}+m\lambda_{2,2}
=2m-1
\end{equation}
is of even family of order $2m$.
Then Theorem~\ref{thm:c} and the rigid decompositions
\begin{align*}
 1\cdots1\overline1\,,\,mm-1\underline1\,,\,mm
  &=0\cdots0\overline1\,,\,10\underline0\,,\,
    \overset{\underset{\smallsmile} i}10\oplus
    1\cdots1\overline0\,,\,m-1m-1\underline1\,,
    \,\overset{\underset{\smallsmile} i}01\\[-3pt]
  &=0\cdots\overset{\underset{\smallsmile} j}1
    \overline1\,,\,11\underline0\,,\,11\oplus
    1\cdots\overset{\underset{\smallsmile} j}0
    \overline0\,,\,m-1m-2\underline1\,,\,m-1m-1,
\end{align*}
which are symbolically expressed by 
$EO_{2m}=H_1\oplus EO_{2m-1}=H_2\oplus EO_{2m-2}$,
imply
\begin{align*}
 c(\lambda_{0,2m}\rightsquigarrow\lambda_{1,3})
 &=\prod_{i=1}^2\frac
 {\Gamma\bigl(\lambda_{1,i}-\lambda_{1,3}\bigr)
  }{\Gamma\bigl(
    \left|\begin{Bmatrix}
   \lambda_{0,2m} &\ \lambda_{1,1}\ &\lambda_{2,i}
    \end{Bmatrix}\right|
  \bigr)}
  \cdot
   \prod_{j=1}^{2m-1}\frac
  {\Gamma\bigl(\lambda_{0,2m}-\lambda_{0,j}+1)}
  {\Gamma\bigl(
    \left|\begin{Bmatrix}
    \lambda_{0,j} & \lambda_{1,1} & \lambda_{2,1}\\
    \lambda_{0,2m} &\ \lambda_{1,2}\ & \lambda_{2,2}
  \end{Bmatrix}\right|
   \bigr)},\allowdisplaybreaks\\
 c(\lambda_{1,3}\rightsquigarrow\lambda_{0,2m})
 &=\displaystyle\prod_{i=1}^2\frac
 {\Gamma\bigl(\lambda_{1,3}-\lambda_{1,i}+1\bigr)
  }{\Gamma\bigl(
   \left|\begin{Bmatrix}
    &[\lambda_{1,1}]_{(m-1)}&[\lambda_{2,\nu}]_{(m)}\\
    (\lambda_{0,\nu})_{1\le\nu\le 2m-1}
   &\ [\lambda_{1,2}]_{(m-1)}\ &[\lambda_{2,3-i}]_{(m-1)}
     \\
    &\lambda_{1,3}
  \end{Bmatrix}\right|
  \bigr)}
  \\
 &\quad
  \cdot
   \prod_{j=1}^{2m-1}\frac
  {\Gamma\bigl(\lambda_{0,j}-\lambda_{0,2m})}
  {\Gamma\bigl(
     \left|\begin{Bmatrix}
   & [\lambda_{1,1}]_{(m-1)}&[\lambda_{2,1}]_{(m-1)}\\
   (\lambda_{0,\nu})_{\substack{1\le\nu\le 2m-1\\ \nu\ne j}}
  &\ [\lambda_{1,2}]_{(m-2)}\ &[\lambda_{2,2}]_{(m-1)}\\[-4pt]
  &\lambda_{1,3}&
  \end{Bmatrix}\right|
   \bigr)}.
\end{align*}
\end{exa}

\section{Appendix}\label{sec:apd}
Crawley-Boevey \cite{CB} gives the following complete answer to 
the additive Deligne-Simpson problem.
\begin{thm}[\cite{CB}]\label{thm:CB}
Let $k$ and $n$ be positive integers, 
$\mathbf m_j=(m_{j,1},\dots,m_{j,n_j})$ be partitions of $n$ and
$\lambda_j = (\lambda_{j,1},\dots,\lambda_{j,n_j})\in \mathbb C^{n_j}$
for $j=0,\dots,k$.
Put $\mathbf m=(\mathbf m_0,\dots,\mathbf m_k)\in\mathcal P^{(n)}_{k+1}$ and 
assume the condition \eqref{eq:sum0}.
Then there exists an irreducible tuple of matrices 
$\mathbf A=(A_0,\dots,A_k)\in M(n,\mathbb C)^{k+1}$ satisfying 
\begin{equation}\label{eq:DSCB}
  A_j\sim L(\mathbf m_j;\lambda_j)\quad(j=0,\dots,k)\text{ \ and \ }
  A_0+\cdots+A_k=0
\end{equation}
if and only if $\alpha_{\mathbf m}$ is a positive root and moreover
\begin{equation}\label{eq:CCB}
  \Bigl(\sum_{j,\,\nu}m_{j,\nu}^{(1)}\lambda_{j,\nu},\dots,
   \sum_{j,\,\nu}m_{j,\nu}^{(N)}\lambda_{j,\nu}\Bigr)\ne (0,\dots,0)\in\mathbb C^N
\end{equation}
for any decomposition
\begin{equation}\label{eq:CBDec}
  \mathbf m = \mathbf m^{(1)}+\cdots+\mathbf m^{(N)}
\end{equation}
with $N\ge 2$ and $\mathbf m^{(i)}\in\mathcal P_{k+1}$ such that
\begin{equation}\label{eq:Pdec}
 \begin{cases}
 \alpha_{\mathbf m^{(i)}}\text{ defined by \eqref{eq:Kazpart} 
  are positive roots \  $(i=1,\dots,N)$,}\\
 \Pidx\mathbf m\le \Pidx\mathbf m^{(1)}+\cdots+\Pidx\mathbf m^{(N)}
 \end{cases}
\end{equation}
under the notation and the correspondence in Remark~\ref{rem:KM} i).
\end{thm}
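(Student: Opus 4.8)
The plan is to translate the additive Deligne–Simpson problem into the language of representations of a deformed preprojective algebra, invoke Crawley-Boevey's classification of the dimension vectors of its simple modules, and then transport the answer back through the dictionary of Remark~\ref{rem:KM}~i) and Proposition~\ref{prop:Kac}. First I would attach to $\mathbf m\in\mathcal P_{k+1}^{(n)}$ the star-shaped quiver $Q$ with a central vertex and, for each $j=0,\dots,k$, an arm of length $n_j-1$; the dimension vector with $n$ at the centre and the partial sums $n_{j,\nu}=m_{j,\nu+1}+m_{j,\nu+2}+\cdots$ along the $j$-th arm is exactly $\alpha_{\mathbf m}$ of \eqref{eq:Kazpart}. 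The eigenvalue data $\lambda_{j,\nu}$ determines a weight $\lambda^{Q}$ on the vertices of $Q$, built from the differences of the $\lambda_{j,\nu}$ along each arm together with a value at the centre fixed by the trace normalization, and the Fuchs-type relation \eqref{eq:sum0} becomes $\lambda^{Q}\!\cdot\!\alpha_{\mathbf m}=0$.

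The key step is the standard dictionary: a tuple $\mathbf A=(A_0,\dots,A_k)$ with $A_j\sim L(\mathbf m_j;\lambda_j)$ and $A_0+\cdots+A_k=0$, taken up to simultaneous conjugation, corresponds to an isomorphism class of representations of the deformed preprojective algebra $\Pi^{\lambda^{Q}}(Q)$ of dimension vector $\alpha_{\mathbf m}$; here the rank conditions recalled in \S\ref{S:M} for the normal form $L(\mathbf m_j;\lambda_j)$ are precisely what forces the representation of the $j$-th arm to have the prescribed dimensions, and $\mathbf A$ irreducible corresponds to the module being simple. Under this correspondence one applies Crawley-Boevey's theorem: a simple $\Pi^{\lambda^{Q}}(Q)$-module of dimension vector $\alpha$ exists if and only if $\alpha$ is a positive root of $Q$, $\lambda^{Q}\!\cdot\!\alpha=0$, and $p(\alpha)>p(\beta^{(1)})+\cdots+p(\beta^{(N)})$ for every decomposition $\alpha=\beta^{(1)}+\cdots+\beta^{(N)}$ with $N\ge 2$ into positive roots satisfying $\lambda^{Q}\!\cdot\!\beta^{(i)}=0$, where $p(\beta)=1-\tfrac12(\beta|\beta)$.

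It then remains to read this off in the notation of the theorem. By Proposition~\ref{prop:Kac}~i) one has $(\alpha_{\mathbf m}|\alpha_{\mathbf m'})=\idx(\mathbf m,\mathbf m')$, so $p(\alpha_{\mathbf m})=\Pidx\mathbf m$, and the failure $p(\alpha)\le\sum p(\beta^{(i)})$ of strict superadditivity for a bad decomposition is exactly the inequality in the second line of \eqref{eq:Pdec}; the requirement that each $\beta^{(i)}=\alpha_{\mathbf m^{(i)}}$ be a positive root is the first line of \eqref{eq:Pdec}, and the condition that $\alpha_{\mathbf m}$ be a positive root is the stated hypothesis. Finally a direct computation identifies $\lambda^{Q}\!\cdot\!\alpha_{\mathbf m^{(i)}}$ with $\sum_{j,\nu}m^{(i)}_{j,\nu}\lambda_{j,\nu}$ up to the common correction coming from the central vertex, so the requirement that the vector in \eqref{eq:CCB} be nonzero for every decomposition obeying \eqref{eq:CBDec} and \eqref{eq:Pdec} is precisely the statement that not all the pieces of such a decomposition have vanishing $\lambda^{Q}$-weight, i.e.\ the negation of the existence of a Crawley-Boevey obstruction. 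Combining these equivalences yields the theorem.

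The hard part is not the translation but Crawley-Boevey's classification of simple modules over $\Pi^{\lambda^{Q}}(Q)$ itself, which is the result being cited: its proof proceeds by using the reflection functors at the vertices of $Q$ — these are the avatars of the middle convolutions $mc_\mu$ of Theorem~\ref{thm:mc} and of the permutations in $S_\infty$, so that the whole problem is equivariant under $\widetilde W_{\!\infty}$ — to reduce the dimension vector either to order $1$, which is handled directly, or into the fundamental region for $W_{\!\infty}$, where the existence of a simple representation is established by a separate geometric argument on the moment map. A secondary technical point to verify is that solving the problem with conjugacy-class closures, as is natural for representations, is equivalent to solving it with the exact classes $L(\mathbf m_j;\lambda_j)$: an irreducible closure-solution can be shown already to lie in the open stratum, essentially because specializing any arm representation away from the generic one would produce a proper common invariant subspace, contradicting irreducibility.
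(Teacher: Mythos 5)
Your proposal is correct and takes the same route as the paper, which states this result purely as a citation of Crawley-Boevey \cite{CB} and gives no proof of its own: the dictionary you describe (star-shaped quiver, dimension vector $\alpha_{\mathbf m}$ as in \eqref{eq:Kazpart}, weight $\lambda^{Q}$ from the eigenvalue differences, irreducible tuples as simple modules over the deformed preprojective algebra, and $p(\alpha)=1-\tfrac12(\alpha|\alpha)=\Pidx$ via Proposition~\ref{prop:Kac}~i)) is exactly how the cited theorem is obtained, and your rephrasing of the superadditivity obstruction matches the contrapositive form \eqref{eq:CCB}--\eqref{eq:Pdec} used here. The genuinely hard content --- the classification of dimension vectors of simple $\Pi^{\lambda^{Q}}(Q)$-modules --- remains the external result being cited, so your argument neither adds to nor deviates from what the paper relies on.
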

K.~Takemura indicated to the author that the following result follows from
Theorem~\ref{thm:CB} and kindly allows the author to include the proof in this 
note.
\begin{thm}\label{thm:GDS}
Retain the notation and the assumption in Theorem~\ref{thm:CB}.
If there exists an irreducible tuple of matrices 
$\mathbf A=(A_0,\dots,A_k)\in M(n,\mathbb C)^{k+1}$ satisfying \eqref{eq:DSCB},
then $\alpha_{\mathbf m}$ defined by \eqref{eq:Kazpart} is a positive root 
such that $\mathbf m$ is indivisible or $\idx\mathbf m<0$.
Conversely if a tuple $\mathbf m\in\mathcal P$ is indivisible or $\mathbf m$
satisfies $\idx\mathbf m<0$ and moreover $\alpha_{\mathbf m}$ is a 
positive root, then $\mathbf m$ is irreducibly realizable.
\end{thm}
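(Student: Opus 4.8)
The plan is to deduce the statement from Theorem~\ref{thm:CB}. Fix the representative $\mathbf m=(\mathbf m_0,\dots,\mathbf m_k)\in\mathcal P_{k+1}^{(n)}$ with all $n_j\ge 1$, write $\mathbf m=d\overline{\mathbf m}$ with $d=\gcd\{m_{j,\nu}\}$ and $\overline{\mathbf m}$ indivisible, and let $\lambda=(\lambda_{j,\nu})$ range over the hyperplane $H\subset\mathbb C^{n_0+\dots+n_k}$ cut out by \eqref{eq:sum0}. For a decomposition $\mathbf m=\mathbf m^{(1)}+\dots+\mathbf m^{(N)}$ as in \eqref{eq:CBDec}, set $f_i(\lambda)=\sum_{j,\nu}m^{(i)}_{j,\nu}\lambda_{j,\nu}$, so $f_1+\dots+f_N$ is the functional defining $H$. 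Two observations organize the proof. First, whether a decomposition satisfies \eqref{eq:Pdec} is independent of $\lambda$, and since each $\mathbf m^{(i)}$ with $\alpha_{\mathbf m^{(i)}}$ a positive root must have $\ord\mathbf m^{(i)}\ge 1$ (a tuple of order $0$ has $\alpha=0$, which is not a root), only finitely many decompositions satisfy \eqref{eq:Pdec}. Second, for a fixed decomposition the vector $(f_1(\lambda),\dots,f_N(\lambda))$ vanishes identically on $H$ if and only if each $f_i$ is a scalar multiple of $f_1+\dots+f_N$, which — because the $\lambda_{j,\nu}$ are independent coordinates — means $\mathbf m^{(i)}=c_i\mathbf m$ with $c_i\in\mathbb C$ and $c_1+\dots+c_N=1$. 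Combining these with Theorem~\ref{thm:CB}: $\mathbf m$ is irreducibly realizable if and only if $\alpha_{\mathbf m}$ is a positive root and no decomposition of the form $\mathbf m^{(i)}=c_i\mathbf m$ (with $N\ge 2$) satisfies \eqref{eq:Pdec}.

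For the converse half, assume $\alpha_{\mathbf m}$ is a positive root and that $\mathbf m$ is indivisible or $\idx\mathbf m<0$. If $\mathbf m^{(i)}=c_i\mathbf m$, then $m^{(i)}_{j,\nu}=c_id\,\overline m_{j,\nu}\in\mathbb Z_{\ge 0}$ and $\gcd\{\overline m_{j,\nu}\}=1$ force $e_i:=c_id\in\mathbb Z_{\ge 0}$; moreover $e_i=0$ would give $\alpha_{\mathbf m^{(i)}}=0$, not a root, so $e_i\ge 1$ and $\sum_i e_i=d$. Thus the only candidates are $\mathbf m=e_1\overline{\mathbf m}+\dots+e_N\overline{\mathbf m}$ with $N\ge 2$, $e_i\ge 1$, $\sum_i e_i=d$, and there are none when $\mathbf m$ is indivisible. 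In the remaining case $\idx\mathbf m<0$, using $\idx(e\overline{\mathbf m},e\overline{\mathbf m})=e^2\idx\overline{\mathbf m}$ and $\Pidx\mathbf p=1-\tfrac12\idx\mathbf p$ one computes
\[
  \Pidx\mathbf m-\sum_{i=1}^N\Pidx\mathbf m^{(i)}
   =1-N-\idx\overline{\mathbf m}\sum_{1\le i<j\le N}e_ie_j ;
\]
since $\idx\overline{\mathbf m}\le-2$ ($\idx$ takes even values) and $\sum_{i<j}e_ie_j\ge\binom N2$, the right-hand side is at least $1-N+N(N-1)=(N-1)^2>0$, so the inequality in \eqref{eq:Pdec} fails. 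Hence no decomposition of the form $\mathbf m^{(i)}=c_i\mathbf m$ satisfies \eqref{eq:Pdec}, and by the criterion above $\mathbf m$ is irreducibly realizable.

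For the forward half, suppose an irreducible $\mathbf A$ satisfying \eqref{eq:DSCB} exists; by Theorem~\ref{thm:CB}, $\alpha_{\mathbf m}$ is a positive root and \eqref{eq:CCB} holds for every decomposition satisfying \eqref{eq:Pdec}. If $\mathbf m$ is not indivisible, then $d\ge 2$ and $\alpha_{\mathbf m}=d\,\alpha_{\overline{\mathbf m}}$ is not primitive; since $\Delta^{re}=W_{\!\infty}\Pi$ consists of primitive vectors, $\alpha_{\mathbf m}$ is imaginary, say $\alpha_{\mathbf m}=w\beta$ with $w\in W_{\!\infty}$ and $\beta\in K$, and then $\idx\mathbf m=(\alpha_{\mathbf m}|\alpha_{\mathbf m})=(\beta|\beta)\le 0$ by Proposition~\ref{prop:Kac} (expand $\beta\in Q_+$ in $\Pi$ and use $(\beta|\alpha)\le 0$ for $\alpha\in\Pi$). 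It remains to rule out $\idx\mathbf m=0$. I would first note that $\alpha_{\overline{\mathbf m}}$ is again a positive root: since $w^{-1}$ preserves $Q$ and $\alpha_{\overline{\mathbf m}}\in Q$, one has $\tfrac1d\beta=w^{-1}\alpha_{\overline{\mathbf m}}\in Q$, and the defining properties of $K$ (membership in $Q_+$, connected support, non-positive pairing with every element of $\Pi$) are invariant under positive rescaling, so $\tfrac1d\beta\in K$ and $\alpha_{\overline{\mathbf m}}=w(\tfrac1d\beta)\in W_{\!\infty}K\subset\Delta_+$. Now, with $\idx\overline{\mathbf m}=0$, the decomposition $\mathbf m=\overline{\mathbf m}+\dots+\overline{\mathbf m}$ into $d\ge 2$ equal summands satisfies \eqref{eq:Pdec} because $\Pidx\mathbf m-d\,\Pidx\overline{\mathbf m}=1-d<0$, yet $f_i(\lambda)=\sum_{j,\nu}\overline m_{j,\nu}\lambda_{j,\nu}=\tfrac1d\sum_{j,\nu}m_{j,\nu}\lambda_{j,\nu}=0$ for all $i$ by \eqref{eq:sum0}, contradicting \eqref{eq:CCB}. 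Hence $\idx\mathbf m<0$, finishing the forward half.

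The linear-algebra reduction of \eqref{eq:CCB} and the $\Pidx$ bookkeeping are routine. I expect the only genuinely delicate point to be the root-theoretic step in the forward half — that an integral divisor $\alpha_{\overline{\mathbf m}}\in Q_+$ of an imaginary positive root $\alpha_{\mathbf m}$ is itself a positive root — which rests on the description $\Delta^{im}_+=W_{\!\infty}K$ and the scaling invariance of $K$ recalled in Remark~\ref{rem:Kac}.
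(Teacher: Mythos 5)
Your proposal is correct and follows essentially the same route as the paper: reduce to Theorem~\ref{thm:CB}, observe that for generic $\lambda$ only decompositions with $\mathbf m^{(i)}$ proportional to $\mathbf m$ can violate \eqref{eq:CCB}, use the identity $\Pidx\,d\overline{\mathbf m}=1+d^2(\Pidx\overline{\mathbf m}-1)$ to show such decompositions fail \eqref{eq:Pdec} when $\idx\overline{\mathbf m}<0$, and use the decomposition into copies of $\overline{\mathbf m}$ to exclude the divisible case with $\idx\mathbf m=0$. Your write-up merely makes explicit a few points the paper leaves implicit (finiteness of relevant decompositions, primitivity of real roots, and that $\alpha_{\overline{\mathbf m}}$ is itself a positive imaginary root), but the argument is the same.
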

\begin{proof}
Note that this theorem follows from Theorem~\ref{thm:CB} 
if $\mathbf m$ is indivisible because \eqref{eq:CCB} always holds
when $\lambda_{j,\nu}$ are generic under the condition \eqref{eq:sum0}.

Suppose $\mathbf m=d\overline{\mathbf m}$ with an integer $d>1$ and 
an indivisible tuple $\overline{\mathbf m}\in\mathcal P_{k+1}$.
Since $\Pidx\mathbf m=1-\frac12\idx\mathbf m=
1-\frac12(\alpha_{\mathbf m},\alpha_{\mathbf m})$,
we have
\begin{equation}\label{eq:Midx}
 \Pidx d\overline{\mathbf m}=1+d^2(\Pidx\overline{\mathbf m}-1).
\end{equation}
If $\Pidx\overline{\mathbf m}=1$, we have
$\Pidx{\mathbf m}=\Pidx\,(d-1)\overline{\mathbf m}=1$
and this theorem also follows from Theorem~\ref{thm:CB} with
the decomposition $\mathbf m=\overline{\mathbf m}+(d-1)\overline{\mathbf m}$
corresponding to \eqref{eq:CBDec}.

Hence we may moreover suppose $\Pidx\overline{\mathbf m}>1$.
Assume the existence of the decomposition \eqref{eq:CBDec} such that
$\sum_{j,\nu}m_{j,\nu}^{(i)}\lambda_{j,\nu}=0$ in Theorem~\ref{thm:CB}.
If $\lambda_{j,\nu}$ are generic, we have 
$\mathbf m^{(i)}=d_i\overline{\mathbf m}$
with positive integers $d_i$ satisfying $d=d_1+\cdots+d_N$.
Then 
\begin{align*}
\Pidx\mathbf m-\sum_{i=1}^N \Pidx d_i\overline{\mathbf m}&=
1 + d^2(\Pidx\overline{\mathbf m}-1) - 
 \sum_{i=1}^N\bigl(1+d_i^2(\Pidx\overline{\mathbf m}-1)\bigr)\\
&=2\sum_{1\le i<j\le N}d_id_j(\Pidx\overline{\mathbf m}-1) - (N-1) > 0
\end{align*}
when $\Pidx\overline{\mathbf m}\ge 2$ and $N\ge 2$.
Hence Theorem~\ref{thm:CB} completes the proof. 
\end{proof}
\begin{rem} \textrm{i)} \ 
Kostov \cite{Ko2} studies the above result when $\idx\mathbf m=0$.

\textrm{ii)}\,
It follows from Theorem~\ref{thm:GDS} that 
the spectral type of any irreducible tuple 
$\mathbf A\in M(n,\mathbb C)_0^{k+1}$ is irreducibly realizable.

\textrm{iii)}
We define that a tuple
$\mathbf A\in M(n,\mathbb C)_0^{k+1}$ and the corresponding 
Fuchsian system \eqref{eq:Fuchs} are \textsl{fundamental}
if $\mathbf A$ is irreducible and cannot be transformed into a tuple of
matrices with a lower rank by any successive applications of additions and 
middle convolutions.
We also define that a tuple $\mathbf m\in\mathcal P$ is \textsl{fundamental} if
it corresponds to a suitable fundamental tuple $\mathbf A\in M(n,\mathbb C)_0^{k+1}$.

Then a tuple $\mathbf m\in\mathcal P$ is {fundamental} if and only if 
$\mathbf m$ is basic or there exist a positive number $d$ and a basic tuple 
$\overline{\mathbf m}\in\mathcal P$ satisfying 
$\mathbf m=d\overline{\mathbf m}$ and $\idx\overline{\mathbf m}<0$.

Hence it follows from Proposition~\ref{prop:basic} and the equality 
\eqref{eq:Midx} that there exist only a finite number of fundamental tuples 
$\mathbf m\in\mathcal P$ such that $\idx\mathbf m$ equal to 
a fixed number.

\textrm{iv) } (Nilpotent case : \cite{CB}, \cite{Ko3})
Under the notation in Theorem~\ref{thm:CB} there exists an irreducible tuple 
$\mathbf A\in M(n,\mathbb C)^{k+1}$ satisfying \eqref{eq:DSCB} with
$\lambda_{j,\nu}=0$ for any $j$ and $\nu$ if and only if $\ord\mathbf m=1$ or 
$\mathbf m$ is fundamental and moreover 
$\mathbf m$ is not the special element in Example~\ref{ex:special}
with $m\ge 2$.
Here we have the decompositions $D_4^{(m+1)}=D_4^{(m)}+D_4^{(1)}$ and
$E_j^{(m+1)}=E_j^{(m)}+E_j^{(1)}$ for $j=6$, $7$ and $8$ which satisfy
\eqref{eq:CBDec} and \eqref{eq:Pdec}.
\end{rem}

\end{document}